\definecolor{my-blue}{rgb}{0.0,0.0,0.6}
\definecolor{my-red}{rgb}{0.5,0.0,0.0}
\definecolor{my-green}{rgb}{0.0,0.5,0.0}
\definecolor{nicos-red}{rgb}{0.75,0.0,0.0}
\definecolor{nicos-green}{rgb}{0.0,0.75,0.0}
\definecolor{light-gray}{gray}{0.6}
\definecolor{really-light-gray}{gray}{0.8}
\definecolor{sussexg}{rgb}{0.0,0.5,0.5}
\definecolor{sussexp}{rgb}{0.5,0.0,0.5}
\newtheorem{theorem}{\color{my-red}{\sc Theorem}}[section]
\newtheorem{lemma}[theorem]{\color{my-red} \sc Lemma}
\newtheorem{proposition}[theorem]{\color{my-red} \sc Proposition}
\newtheorem{corollary}[theorem]{\color{my-red} \sc Corollary}
\numberwithin{equation}{section}
\theoremstyle{remark}
\newtheorem{remark}[theorem]{\color{my-red} Remark}
\newcommand{\be}{\begin{equation}}
\newcommand{\ee}{\end{equation}}
\def\hei{T}
\def\bE{\mathbb{E}}
\def\bN{\mathbb{N}}
\def\bP{\mathbb{P}}
\def\bR{\mathbb{R}}
\def\bZ{\mathbb{Z}}
\def\DAGp{\textsf{\textup{DAG}. }}
\def\DAGc{\textsf{\textup{DAG}, }}
\def\DAGsc{\textsf{\textup{DAG}}s, }
\def\DAGsp{\textsf{\textup{DAG}}s. }
\def\MECp{\textsf{\textup{MEC}. }}
\def\MECc{\textsf{\textup{MEC}, }}
\def\PDAG{\textsf{\textup{PDAG} }}
\def\PDAGs{\textsf{\textup{PDAG}}s }
\def\DAG{\textsf{\textup{DAG} }}
\def\MEC{\textsf{\textup{MEC} }}
\def\DAGs{\textsf{\textup{DAG}}s }
\def\MECs{\textsf{\textup{MEC}}s }
 \def\Z{\bZ}
\def\Q{\mathbb{Q}}
\def\R{\bR}
\def\N{\bN}
\def\P{\bP}
\def\E{\bE}
\def\P{\bP} 
\definecolor{partcolor1}{rgb}{0.0,0.5,0.0}
\definecolor{partcolor2}{rgb}{0.0,0.5,0.0}
\definecolor{darkgreen}{rgb}{0.0,0.5,0.0}
\definecolor{darkblue}{rgb}{0.5,0.1,0.5}
\definecolor{deepblue}{rgb}{0.25,0.41,0.88}
\definecolor{nicosred}{rgb}{0.65,0.1,0.1}
\definecolor{light-gray}{gray}{0.7}
\begin{document}
\usdate
\title[On the number and size of \MECs of \DAGs]
{On the number and size of Markov equivalence classes of random directed acyclic graphs}
\author{Dominik Schmid}
\address{Dominik Schmid, University of Bonn, Germany}
\email{d.schmid@uni-bonn.de}
\author{Allan Sly}
\address{Allan Sly, Princeton University, United States}
\email{asly@princeton.edu}
\keywords{directed acyclic graph, Markov equivalence class, tower decomposition}
\subjclass[2020]{Primary: 62H22; Secondary: 60K99, 60H05, 68R99}
\date{\today}
\begin{abstract} 

In causal inference on directed acyclic graphs, the orientation of edges is in general only recovered up to Markov equivalence classes.
We study Markov equivalence classes of uniformly random directed acyclic graphs. Using a tower decomposition, we show that the ratio between the number of Markov equivalence classes and directed acyclic graphs approaches a positive constant when the number of sites goes to infinity. For a typical directed acyclic graph,  the expected number of elements in its Markov equivalence class remains bounded. More precisely, we prove that for a uniformly chosen directed acyclic graph, 
the size of its Markov equivalence class has super-polynomial tails.
\end{abstract}
\maketitle
\vspace*{-0.8cm}
\section{Introduction} \label{sec:Introduction}
For a graphical model on a directed acyclic graph, the conditional independence information which can be extracted from the model, is encoded by its $v$-structures, a collection of $v$-shaped subgraphs. Given the underlying undirected graph, and the $v$-structures, is it possible to reconstruct all edge orientations? In general, having only access to the conditional independence between sites,  and the set of undirected edges, is not sufficient to uniquely determine a directed acyclic graph (\textsf{DAG}).   We refer to its \textbf{Markov equivalence class} (\textsf{MEC}) as the set of all \DAGs with the same undirected graph, and the same $v$-structures. Markov equivalence classes are a central object in causality theory with applications in machine learning, economics and social sciences \cite{C:LearningMECs,EGS:Learning,ES:Philosophy, FLGZH:LocalMEC,P:Causality}. \\

While the size of a Markov equivalence class of a \DAG on $n$ sites can be as large as $n!$, for example when the graph is a clique, simulations suggest that the ratio between Markov equivalence classes and directed acyclic graphs on $n$ sites remains uniformly bounded from below when $n$ goes to infinity~\cite{S:EnumerateLabeled}. 
Similarly, the ratio between \textbf{essential directed acyclic graphs}, which are the unique elements in their Markov equivalence class, and the total number of \DAGs seems also to be uniformly bounded in $n$. 
Over the past decades, exact recursive descriptions for counting the number of \DAGsc as well as the number of essential \DAGsc were found \cite{RL:CountingUnlabeled,S:EnumerateLabeled,S:EnumerateEssential}. However, there are no simple exact formulas available to give a precise asymptotic control on the above ratios. In this paper, we show that the ratio between \MECs and \DAGs on $n$ sites, as well as the ratio between essential directed acyclic graphs and \DAGsc both converge to some positive constants $c_{\textup{MEC}}$ and $c_{\textup{Ess}}$, respectively, when $n$ goes to infinity. In particular, with positive probability, a \DAG chosen uniformly at random on $n$ sites is the unique element in its Markov equivalence class. To our best knowledge, these are the first rigorous results on the above ratios for this model.  

In recent years, the question of counting of the number of Markov equivalence classes for various families of \DAGsc including \DAGs on trees and complete graphs with a few missing edges, is studied in the machine learning community \cite{C:LearningMECs,EGS:Learning,FLGZH:LocalMEC,HJY:ExploreSize,TK:CountingDAG}. We show that the expected size of a Markov equivalence class for a directed acyclic graph, chosen uniformly at random on $n$ sites, is bounded uniformly in $n$. In contrast to counting the number of \MECs on a given underlying graph, the challenge for a uniform sampled \DAG is that we do not have  access to a simple recursive sampling procedure, and thus have to directly study its typical structure in order to get bounds on its Markov equivalence class.  
\subsection{Model and results}\label{sec:ModelAndResults}

Let $G$ be a directed acyclic graph with (labeled) vertex set $V$,  with $V=[n]:= \{1,\dots,n\}$ for some $n\in \N$, and directed edge set $E$. We denote by $\mathcal{D}_n$ the set of all labeled \DAGs on $n$ vertices. 
For any triplet of vertices $(a,b,c)\in V^3$, we say that $c$ is a \textbf{collider} if $(a,c),(b,c) \in E$, while $(a,b),(b,a) \notin E$. In this case,  we call $(a,b,c)$ a $\boldsymbol v$\textbf{-structure}, also called intervention or immorality, at $c$. 
For a fixed \DAG $G \in \mathcal{D}_n$, we denote by $M(G)$ the set of all \DAGs which have the same edge set when forgetting the edge directions, also called the \textbf{skeleton} of $G$, and the same $v$-structures as $G$. We refer to $M(G)$ as the \textbf{Markov equivalence class} of $G$; see also Figure \ref{fig:DAGandMEC}. In the following, our main focus is to estimate the number and size of Markov equivalence classes for directed acyclic graphs sampled uniformly at random on a given number of sites. \\

For a uniformly sampled \DAG $G$ on $n$ sites, let $\P_n$ and $\E_n$ be the corresponding law and expectation. We have the following main result on the tails of the size of a Markov equivalence class for a \DAG $G$ sampled according to $\P_n$.

\begin{theorem}\label{thm:Moments} For all $n\in \N$ and $t \geq 0$ sufficiently large, we have that
\begin{equation}\label{eq:TailsMain}
\P_{n}( |M(G)|> t) \leq C \exp\big(-\log(t)^{1+\alpha}\big)
\end{equation} for some constant $C>0$ and $\alpha=\frac{1}{20}$. 
Thus, for every $k\in \N$, there exists some constant $c_k>0$ such that for all $n\in \N$
\begin{equation}\label{eq:MomentsMain}
\E_n\Big[ |M(G)|^k \Big] \leq c_k \, .
\end{equation} In particular, the expected size of a \MEC for a uniform \DAG on $n$ sites is finite.
\end{theorem}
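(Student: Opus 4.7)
The plan is to prove the tail bound \eqref{eq:TailsMain} first and then deduce the moment bounds \eqref{eq:MomentsMain} by routine integration. The starting point is the classical reduction of the size of a Markov equivalence class to the combinatorics of the essential graph associated to $G$: $|M(G)|$ equals the product, over the undirected chain components of the essential graph, of the numbers of acyclic moral orientations of those components. Hence if $|M(G)|>t$, there must exist a vertex set $U\subseteq [n]$ that induces a subgraph with at least $t$ skeleton-preserving, $v$-structure-preserving reorientations. Since each such reorientation assigns a direction to at most $|U|(|U|-1)/2$ edges, this forces $|U|$ to grow at least of order $\sqrt{\log t}$, and in the regimes where the argument is tight, of order $\log t$.

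The main step is to use the tower decomposition developed earlier in the paper to show that, for a uniformly chosen \DAGc such a large ``ambiguous'' substructure is rare. The tower decomposition identifies, for a typical $G$, a hierarchy of layers in which most vertices participate in many $v$-structures with their neighbors; these $v$-structures act as orientation witnesses that freeze the directions of the incident edges in the \MECp Thus, a vertex can belong to a nontrivial chain component only if it lacks the expected number of witnessing $v$-structures, which is an atypical event whose probability is quantified by the tower analysis. Combining a lower bound on $|U|$ with a union bound over the at most $2^n$ choices of $U$, together with the super-polynomial decay of the probability that a subset of size $k$ is uniformly witness-deficient, yields a tail of the form $\exp(-(\log t)^{1+\alpha})$; the specific exponent $\alpha=\tfrac{1}{20}$ arises from quantitative slack in balancing enumeration losses against the probabilistic gain from each tower deviation. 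The main obstacle is precisely this quantitative analysis: one must simultaneously control many correlated local events while still beating the $2^{|U|^2}$-type enumeration over candidate substructures, which is exactly what makes a straightforward $\exp(-(\log t)^2)$ bound out of reach and forces the weaker exponent $1+\alpha$.

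Finally, the moment bounds \eqref{eq:MomentsMain} follow from the tail bound by writing
\[
\E_n\big[|M(G)|^k\big] = k \int_0^\infty t^{k-1}\, \P_n(|M(G)|>t)\, \dif t,
\]
and noting that, for any fixed $k\in\N$, the integrand $k t^{k-1} C \exp(-\log(t)^{1+\alpha})$ decays faster than any polynomial in $1/t$ as $t\to\infty$, and is therefore integrable; the contribution from small $t$ is uniformly bounded since $|M(G)|\ge 1$ almost surely. This gives a constant $c_k$ depending only on $k$ (via the integral and the threshold above which \eqref{eq:TailsMain} becomes valid), but independent of $n$, as required.
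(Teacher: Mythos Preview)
Your proposal captures the high-level intuition but contains a genuine gap at the central step, and it omits the two technical ingredients that the paper actually relies on.

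\textbf{The union bound does not close.} You write that one combines ``a lower bound on $|U|$ with a union bound over the at most $2^n$ choices of $U$'' to obtain the tail. This cannot produce a bound that is uniform in $n$: if the relevant chain-component vertex set has size $|U|\asymp \log t$, the enumeration cost is $\binom{n}{|U|}$, which is unbounded as $n\to\infty$ for fixed $t$. The paper avoids this by first \emph{localizing} all non-collider edges to the last $O(k^{7/6})$ vertices in the tower labeling (Lemma~\ref{lem:ColliderHighInTower}); only after this reduction is any enumeration performed, and then over at most $k^{O(1)}$ objects rather than $2^n$. Your sketch never addresses how to remove the $n$-dependence.

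\textbf{The naive size bound is not sharp enough.} Implicitly you use $|M(G)|\le |U|!$ (Lemma~\ref{lem:MECviaColliders}), and then try to show $|U|$ is small. But the paper does \emph{not} show that the number of non-collider vertices is $\le k$ with probability $1-\exp(-ck^{11/10})$; indeed there can be up to order $k^{7/6}$ such vertices. The key device is instead the refined counting of Proposition~\ref{pro:RefinedMECcounting}: one partitions the non-collider vertices into a small set $V_1$ of ``$k$-bad'' sites (at most $k/2$ of them) and a large set $V_2$ of ``$k$-good'' sites whose mutual ordering is essentially determined by directed paths, and proves
\[
|M(G)|\le |V_1|!\,\big(|V_2|\,(10a)!\big)^{b}
\]
with $a=k^{4/5}$ and $b=k^{1/10}$. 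The exponent $\alpha=\tfrac1{20}$ comes from the arithmetic $m^{11/10}$ in Proposition~\ref{pro:NonCollider} combined with $m\asymp \log t/\log\log t$, not from generic ``slack''. Your proposal has no analogue of this partition or of the $(|V_2|(10a)!)^b$ factor, and without it the argument does not reach $\exp(-(\log t)^{1+\alpha})$.

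Your derivation of the moment bounds \eqref{eq:MomentsMain} from the tail bound \eqref{eq:TailsMain} via the layer-cake integral is correct and matches the paper's intent.
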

The constant $\alpha$ in \eqref{eq:TailsMain} is not optimized, but can be at most $1$, e.g.\ when we set $t=n!-1$ and consider the event that the underlying skeleton is a clique. It remains an open question whether the parameter $\alpha$ in \eqref{eq:TailsMain} can be chosen arbitrarily close to $1$. Next, we consider the set of \textbf{non-collider edges} $C(G)$, i.e.\ the edges which are not part of any $v$-structure, and show that the number of non-collider edges $|C(G)|$ converges to some random quantity. 
\begin{theorem}\label{thm:Essential} There exists some positive, almost surely finite random variable $X$ with
\begin{equation}\label{eq:CountingNonColliders}
\lim_{n \to \infty} \P_{n}( |C(G)| \geq s) = \P(X \geq s)
\end{equation} for all $s\geq 0$. Moreover, we have that
\begin{equation}\label{eq:SeeingNonColliders}
\lim_{n \to \infty} \P_{n}( |C(G)| = 0) > 0 \, ,
\end{equation} i.e.\ with positive probability, a \DAG according to $\P_n$ is the unique element in its \MECp
\end{theorem}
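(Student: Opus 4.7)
The plan is to combine a limit theorem for the top of the tower decomposition with a localization estimate showing that non-collider edges cannot sit deep inside a uniform random \DAGp Recall the tower decomposition $V = T_1 \cupdot T_2 \cupdot \cdots$, where $T_1$ is the set of sinks of $G$ and $T_{j+1}$ is the set of sinks of $G$ restricted to $V \setminus (T_1 \cup \cdots \cup T_j)$. I expect the detailed tower analysis from the proof of Theorem~\ref{thm:Moments} to give direct access to the joint law of the top towers and the edges they span.

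The first step is to show that for each fixed $k$, the joint law of the top $k$ towers of $G$, together with all edges they span, converges as $n \to \infty$ to a limiting random structure $G_\infty^{(k)}$. I would prove this by writing the probability that the top-$k$ portion equals a prescribed pattern $H$ as the exact ratio between the number of \DAGs on $n$ vertices with top-$k$ profile $H$ and the total number of \DAGs on $n$ vertices, and then let $n\to\infty$ using the tower asymptotics underlying Theorem~\ref{thm:Moments}. The numerator factorizes into an explicit combinatorial count for $H$ times the number of \DAGs on $n-|H|$ vertices satisfying a sink-forbidding condition, so that the ratio stabilizes.

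The second step is to localize non-collider edges. An edge $u \to v$ lies in $C(G)$ iff every other in-neighbor of $v$ is adjacent to $u$; in particular this forces $N^-(v) \setminus \{u\}$ to be contained in the (undirected) neighborhood of $u$. For $v$ in a deep tower $T_j$ with $j$ large, $v$ typically has in-degree $\gg 1$, and conditional on the tower heights, the adjacencies inside $N^-(v)$ are sufficiently random that, with probability exponentially close to $1$ in $|N^-(v)|$, some other in-neighbor fails to be adjacent to $u$. Coupling a lower bound on typical in-degrees in deep towers with a union bound over the $O(n^2)$ candidate edges then gives
\[
\P_n\bigl(C(G) \text{ contains an edge incident to } V \setminus (T_1 \cup \cdots \cup T_K)\bigr) \leq \varepsilon(K),
\]
with $\varepsilon(K) \to 0$ uniformly in $n$. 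Setting $C_K(G) = C(G) \cap (T_1 \cup \cdots \cup T_K)^2$, Step~1 provides $|C_K(G)| \Rightarrow X_K$, Step~2 gives $|C(G)| - |C_K(G)| \to 0$ in probability uniformly in $n$, and combining the two yields \eqref{eq:CountingNonColliders} with $X := \lim_K X_K$, a.s.\ finite by Fatou together with the tightness from Step~2.

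For~\eqref{eq:SeeingNonColliders} it then suffices to show $\P(X = 0) > 0$. I would exhibit a positive-probability event on the limiting top structure on which every vertex $v$ with in-degree $\geq 1$ has two non-adjacent in-neighbors — for instance an event on which the top towers are nonempty of suitable sizes and a specific ``clique around a vertex" pattern is absent — so that no edge into any vertex of $G_\infty^{(K)}$ is a non-collider. Combined with the localization of Step~2, this forces $|C(G)| = 0$ with asymptotically positive probability. The main obstacle is the quantitative part of Step~2: the conditional distribution of the edges given the tower heights of a \emph{uniform} random \DAG is not a product measure, and extracting the requisite exponential decay in $|N^-(v)|$ uniformly over the tower profile and the choice of $(u,v)$ is where the bulk of the technical work will lie.
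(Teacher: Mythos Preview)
Your two-step plan—localize $C(G)$ to a bounded window of the tower and then prove distributional convergence of that window—is exactly the paper's route, but you have the orientation of the tower reversed, and you have misjudged which step carries the work.

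With your convention $T_1=\{\text{sinks}\}$, a vertex $v\in T_j$ receives its in-edges only from $T_{j+1}\cup T_{j+2}\cup\cdots$; hence for $j$ \emph{large} (near the sources) $v$ has \emph{small} in-degree, not large. Your Step~2 heuristic (``many in-neighbors of $v$ force one of them to miss $u$'') is correct but applies to \emph{small} $j$, not large. The conclusion is therefore the opposite of what you wrote: non-collider edges localize to the \emph{source} end of the tower, i.e.\ to $T_{T(G)-K+1}\cup\cdots\cup T_{T(G)}$, not to $T_1\cup\cdots\cup T_K$. This is exactly what the paper proves via the quantity $L(G)$ in Lemma~\ref{lem:ExpectedNonCollider}, and correspondingly the convergence you need in Step~1 is for the \emph{last} $k$ entries of the tower vector (Lemma~\ref{lem:GadgetSize}), not the first $k$.

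Once the direction is fixed, your ``main obstacle'' in Step~2 evaporates: conditional on the full tower $\sigma$, every edge between sites in non-adjacent layers is present independently with probability~$\tfrac12$ (Remark~\ref{rem:IndependentSamplingTower}); only adjacent-layer edges are constrained. Combined with the uniform Gaussian tails on layer sizes from Proposition~\ref{pro:LevelEstimate}, this yields the localization in Lemma~\ref{lem:ExpectedNonCollider} directly—no delicate dependence analysis is needed. By contrast, your Step~1 sketch (``the ratio stabilizes'') hides the substantial content of Section~\ref{sec:DecompositionOfTowers}: the paper introduces regeneration layers of size~$1$, decomposes the tower vector into i.i.d.\ \emph{features} between them under a tilted law $\mu_\theta$, tunes $\theta=\theta_*$ via Lemma~\ref{lem:FixTheta} to match the regeneration density of Lemma~\ref{lem:NumberOfLevels}, and then runs a local CLT on feature sizes to get Lemma~\ref{lem:GadgetSize}. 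Finally, for~\eqref{eq:SeeingNonColliders} the paper does not analyse the limit as you propose but uses a short finite-$n$ mapping (Lemma~\ref{lem:SeeNoCollider}): on the high-probability event $\{L(G)<J,\ h_{T(G)-i}\le J\ \forall i\le J\}$, delete all edges with both endpoints in the last $J$ layers; the image lies in $\{C(G)=\emptyset\}$ and the fibers have size at most $J^{J}$.
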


When a given \DAG is the unique element in its Markov equivalence class, we refer to it as an \textbf{essential directed acyclic graph}. We have the following bounds on the number of Markov equivalence classes, and the number of essential directed acyclic graphs on $n$ sites.
\begin{theorem}\label{thm:AverageEssentialMEC} 
Let $\mathcal{M}_n$ be the set of all distinct \MECs on $n$ sites. Then we have that 
\begin{equation}\label{eq:AverageMEC}
 \lim_{n \to \infty} \frac{|\mathcal{M}_n|}{|\mathcal{D}_n|} = c_{\textup{MEC}} 
 \end{equation} for some universal constant $c_{\textup{MEC}}>0$.
Let $\mathcal{M}^{\textup{Ess}}_{n} \subseteq \mathcal{M}_n$ be the number of Markov equivalence classes corresponding to essential \DAGsp Then we have that
\begin{equation}\label{eq:AverageEssential}
 \lim_{n \to \infty} \frac{|\mathcal{M}^{\textup{Ess}}_n|}{|\mathcal{D}_n|} = c_{\textup{Ess}} 
 \end{equation} for some universal constant $c_{\textup{Ess}}>0$.
\end{theorem}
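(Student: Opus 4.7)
The strategy is to reduce both ratios to probabilistic statements about $|M(G)|$ and then invoke the tower decomposition used for Theorems~\ref{thm:Moments} and~\ref{thm:Essential}. Partitioning $\mathcal{D}_n$ by Markov equivalence class yields the two double-counting identities
\[
\frac{|\mathcal{M}_n|}{|\mathcal{D}_n|}\;=\;\E_n\!\left[\frac{1}{|M(G)|}\right],\qquad \frac{|\mathcal{M}_n^{\textup{Ess}}|}{|\mathcal{D}_n|}\;=\;\P_n\!\big(|M(G)|=1\big),
\]
where in the first we used $\sum_{G\in\mathcal{D}_n}|M(G)|^{-1}=\sum_{M\in\mathcal{M}_n}|M|\cdot|M|^{-1}=|\mathcal{M}_n|$, and in the second that essential \DAGs correspond exactly to the singleton Markov equivalence classes. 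Accordingly it suffices to show that $|M(G)|$ converges in distribution under $\P_n$ to some a.s.\ finite integer-valued $Y$ with $\P(Y=1)>0$: the first ratio then tends to $\E[1/Y]$ by bounded convergence on $(0,1]$ (uniform integrability even of higher powers is provided by~\eqref{eq:TailsMain}), while the second tends to $\P(Y=1)$ by the portmanteau theorem applied to the atom at $1$.

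\medskip

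The core input needed is a localization lemma: with probability tending to one (uniformly in a depth cutoff $K$), $|M(G)|$ is a function of only the top $K$ levels of the tower decomposition of $G$. This is essentially already contained in the proof of Theorem~\ref{thm:Essential}, where the same tower decomposition is used to obtain the limit law of $|C(G)|$. The point is that every reversible edge of $G$ (in the sense of Chickering) is a covered non-collider edge, so its reversibility can be read off from a bounded neighborhood in the tower, and the non-collider edges themselves concentrate near the top of the tower with overwhelming probability. Coupling the top $K$ levels of $G\sim\P_n$ to a limiting $n$-independent random structure (already constructed for Theorem~\ref{thm:Essential}), sending $n\to\infty$ for fixed $K$ and then $K\to\infty$ while using~\eqref{eq:TailsMain} to discard the small-probability event that $|M(G)|$ is genuinely affected by configurations below depth $K$, yields the distributional convergence $|M(G)|\Rightarrow Y$.

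\medskip

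Positivity of both constants is then immediate. Every edge participating in a $v$-structure is pinned by it, so reversing it would alter the set of $v$-structures; hence $\{|C(G)|=0\}\subseteq\{|M(G)|=1\}$, and by~\eqref{eq:SeeingNonColliders}
\[
c_{\textup{Ess}}\;=\;\P(Y=1)\;\geq\;\lim_{n\to\infty}\P_n\!\big(|C(G)|=0\big)\;>\;0,
\]
and $c_{\textup{MEC}}=\E[1/Y]\geq\P(Y=1)=c_{\textup{Ess}}>0$. The main obstacle is the localization step, i.e.\ showing that the contribution to $|M(G)|$ coming from reversible configurations deep in the tower is negligible uniformly in $n$. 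This should follow from the same machinery that controls the tails of $|C(G)|$ in Theorem~\ref{thm:Essential}, combined with~\eqref{eq:TailsMain} to truncate the rare event that $|M(G)|$ is atypically large, at which point the convergence $|M(G)|\Rightarrow Y$ and both limits in Theorem~\ref{thm:AverageEssentialMEC} follow.
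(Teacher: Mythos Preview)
Your proposal is correct and follows essentially the same route as the paper: both arguments localize $|M(G)|$ to the last $J$ tower layers on the high-probability event $\{L(G)<J\}$ (Lemma~\ref{lem:ExpectedNonCollider}) and then invoke convergence of the law of those layers (Lemma~\ref{lem:GadgetSize}); your double-counting identities $|\mathcal{M}_n|/|\mathcal{D}_n|=\E_n[|M(G)|^{-1}]$ and $|\mathcal{M}_n^{\textup{Ess}}|/|\mathcal{D}_n|=\P_n(|M(G)|=1)$ make explicit what the paper leaves implicit. One remark: the appeals to~\eqref{eq:TailsMain} are superfluous, since $|M(G)|^{-1}\le 1$ already gives bounded convergence, and the relevant localization input is $\P_n(L(G)\ge J)\to 0$ rather than the tail of $|M(G)|$.
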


Simulations suggest that $c_{\textup{Ess}}^{-1} \approx 13.65$, while the ration between the number of Markov equivalence class to \DAGs satisfies $c^{-1}_{\textup{MEC}} \approx 3.94$; see \cite{S:EnumerateLabeled,S:EnumerateEssential}, and \cite{BT:NoteMECS} for a heuristic explanation using linear extensions of the reachablity poset of a \DAGp 

\begin{remark} Let us stress that the above results are proven with respect to  labeled directed acyclic graphs. In \cite{BR:Asymptotic2}, it is shown that a \DAG drawn according to $\P_n$ has only trivial automorphisms with probability tending to $1$ when $n$ goes to infinity. Thus, Theorem~\ref{thm:Essential} and Theorem~\ref{thm:AverageEssentialMEC} extend  to sampling an unlabeled \DAG uniformly at random.
\end{remark}

\begin{figure} 
\begin{center}
\begin{tikzpicture}[scale=0.45]

	\node[shape=circle,scale=0.9,draw] (A) at (0,0.5){$2$} ;
 	\node[shape=circle,scale=0.9,draw] (B) at (-2.5,2){$7$} ;
	\node[shape=circle,scale=0.9,draw] (C) at (2.5,2){$1$} ;
 	\node[shape=circle,scale=0.9,draw] (D) at (0,3.5){$4$} ;	
 	\node[shape=circle,scale=0.9,draw] (E) at (0,6){$3$} ;
	\node[shape=circle,scale=0.9,draw] (F) at (-2,8){$5$} ;
 	\node[shape=circle,scale=0.9,draw] (G) at (2,8){$6$} ;

 	  	\draw[line width=1.7,->] (A) to (B);		
 	  	\draw[line width=1.7,->] (A) to (C);	
 	  	\draw[line width=1.7,->,my-red,densely dotted] (B) to (D);		
 	  	\draw[line width=1.7,->,my-red,densely dotted] (C) to (D);	 	
	  	\draw[line width=1.7,->] (D) to (E);		
	  	\draw[line width=1.7,->] (E) to (F);	
	  	\draw[line width=1.7,->,my-red,densely dotted] (E) to (G);			
	  	\draw[line width=1.7,->,my-red,densely dotted] (C) to (G);	
	  	\draw[line width=1.7,->] (B) to (E);	
 
 \def\x{9};

	\node[shape=circle,scale=0.9,draw] (A) at (0+\x,0.5){$2$} ;
 	\node[shape=circle,scale=0.9,draw] (B) at (-2.5+\x,2){$7$} ;
	\node[shape=circle,scale=0.9,draw] (C) at (2.5+\x,2){$1$} ;
 	\node[shape=circle,scale=0.9,draw] (D) at (0+\x,3.5){$4$} ;	
 	\node[shape=circle,scale=0.9,draw] (E) at (0+\x,6){$3$} ;
	\node[shape=circle,scale=0.9,draw] (F) at (-2+\x,8){$5$} ;
 	\node[shape=circle,scale=0.9,draw] (G) at (2+\x,8){$6$} ;

 	  	\draw[line width=1.7,->] (B) to (A);		
 	  	\draw[line width=1.7,->] (A) to (C);	
 	  	\draw[line width=1.7,->,my-red,densely dotted] (B) to (D);		
 	  	\draw[line width=1.7,->,my-red,densely dotted] (C) to (D);	 	
	  	\draw[line width=1.7,->] (D) to (E);		
	  	\draw[line width=1.7,->] (E) to (F);	
	  	\draw[line width=1.7,->,my-red,densely dotted] (E) to (G);			
	  	\draw[line width=1.7,->,my-red,densely dotted] (C) to (G);	
	  	\draw[line width=1.7,->] (B) to (E);

\end{tikzpicture}
\end{center}
\caption{\label{fig:DAGandMEC} Example of  two \DAGs on $n=7$ sites in the same \MECc differing on $\{2,7\}$. Edges in dotted red are part of a $v$-structure. Note that the edge $(4,3)$ is present for all graphs in the \MEC despite not being part of a collider.} 
\end{figure}
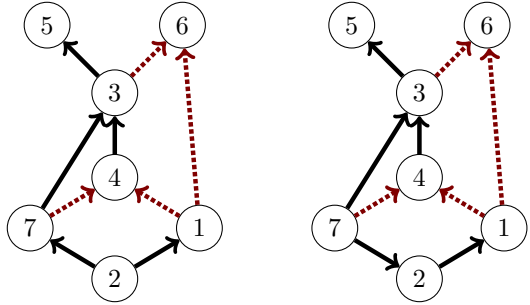

\subsection{Related work}\label{sec:RelatedWork}

Directed acyclic graphs and their Markov equivalence classes are  central objects in causality theory. In a more general context, the concept of $d$-separation is used for graphical models to describe conditional independence; see \cite{P:Causality} for a classical introduction to causality theory, and \cite{KF:GraphicalModels} for an exhaustive treatment of graphical models. In our setup, the special case of a Bayesian network, where the underlying graph is directed and acyclic, conditional independence is described by $v$-structures. Reconstructing a \DAG from its skeleton and $v$-structure, respectively estimating the size of its Markov equivalence class, has a variety of applications in machine learning and related disciplines; see  \cite{C:LearningMECs,EGS:Learning,ES:Philosophy,FLGZH:LocalMEC}. 

Over the last decades, various results on describing and counting directed acyclic graphs are achieved, see for example  \cite{S:EnumerateLabeled,S:EnumerateEssential} for recursive formulas on counting labeled (essential) \DAGsc as well as \cite{BRRW:Asymptotic1,BR:Asymptotic2,RL:CountingUnlabeled} for enumerations when the number of edges is fixed, or the \DAGs are unlabeled. These formulas allowed also for counting certain statistics of \DAGsc see for example \cite{L:MaximalVertices} on the number of sources of a \DAG chosen uniformly at random on a given number of sites. However, no explicit formulas to express the number of elements in the Markov equivalence class of general  directed acyclic graphs are currently known. \\

An alternative characterization of Markov equivalence classes is given by Andersson et al.\ using partially directed acyclic graphs, i.e.\ where some edges in the \DAG can be undirected~\cite{AMP:MECsForDAG}. Each Markov equivalence class corresponds to some partially directed acyclic graph -- see Section \ref{sec:RepresentationOfMECs} -- and Andersson et al.\ provide necessary and sufficient conditions for a partially directed acyclic graphs to correspond to a Markov equivalence class.  \\

In recent years, there is progress on computing the number of Markov equivalence classes for a fixed underlying skeleton, e.g.\ when the underlying graph is a tree or the complete graph with a few missing edges, see \cite{AG:EnumerateMECs,G:CountingSource,G:CountingMECs,
GP:Enumerate,HJY:ExploreSize,KSSU:SizeMEC,
RSU:CountingImmortal,RSU:MECCountTree} for a non-exhaustive list of results in this direction. Moreover, sampling the number and size of Markov equivalence classes, in particular using Markov chains, is a main question of interest; see \cite{BT:NoteMECS,GHT:EffectiveAlgo,GSKZ:CountingClique,
 GP:SizeDistribution,TK:CountingDAG}, and also \cite{GPV:SamplingUniformly,WBL:Polynomial} for very recent results on exact polynomial-time sampling of Markov equivalence classes on general \DAGsp The above results have in common that they rely on knowledge about the structure on the directed acyclic graph, either in terms of its skeleton, or a recursive decomposition technique such as clique trees. \\

When estimating the number and size of Markov equivalence classes for a directed acyclic graphs chosen uniformly at random on a given number of sites, the set of applicable techniques is more restricted. Exact recursive formulas for counting and a decomposition via clique trees are available, but difficult to control. In \cite{BT:NoteMECS}, a heuristic on the number of \MECs and essential \DAGs is given using the reachability poset of a directed acyclic graph. In this paper, we rely on a similar so-called tower decomposition, which can be found for example in \cite{BR:Asymptotic2} and \cite{M:ShapeDAG} to study the structure of a typical \DAGsc i.e.\ when removing the assumption of labeled sites, or when conditioning on a fixed number of edges in the graph. These structural results rely crucially on generating functions, and we will use them as a starting point for more probabilistic arguments to study Markov equivalence classes.

\subsection{Outline of the paper}\label{sec:Outline}

In order to show our main results, we pursue the following strategy. We start in Section~\ref{sec:HeightDecomposition} by recalling the tower decomposition for directed acyclic graphs. In Section~\ref{sec:DecompositionWeights}, we give precise bounds on the number of direct acyclic graphs corresponding to a given tower vector. To this end, we introduce the notion of regeneration points, which allows us in Section~\ref{sec:DecompositionOfTowers} to refine previously known results on the shape of towers. With this at hand, we prove convergence of the law of the last layers in a tower decomposition. In return, this allows us to establish Theorems~\ref{thm:Essential} and \ref{thm:AverageEssentialMEC} in Section~\ref{sec:NumberColliders} on the convergence of the number of non-collider edges.
In Section~\ref{sec:RepresentationOfMECs}, we collect basic estimates on the number of elements in a Markov equivalence class, and we present a refined counting method for Markov equivalence classes; see  Proposition~\ref{pro:RefinedMECcounting}. Using estimates on the tower decomposition from Section~\ref{sec:DecompositionWeights}, we then partition the sites of a given \DAG according to their potential to create non-collider edges. Bounding the number of sites in each such partition, as well as the number of non-collider edges between them, this allows us to conclude Theorem~\ref{thm:Moments} in Section~\ref{sec:NumberMECs}.

\section{Tower decomposition of \DAGs}\label{sec:HeightDecomposition}

In the following, we provide a natural way of forming equivalence classes for labeled \DAGsc apart from the Markov equivalence relation given in the introduction. We partition directed acyclic graphs according to their vertex heights, i.e.\ the length of the longest directed path to a sink in the \DAG starting from a given site $v$. A version of this construction can be found for example in Section 4 of \cite{M:ShapeDAG}. \\

For a given \DAG $G$, we define its \textbf{tower decomposition} as follows. We let $H_1=H_1(G)$ be the set of all sinks in $G$, i.e.\ the set of sites with no outgoing edges. In a next step, given $H_1$, we consider the reduced graph $G_1:= G \setminus H_1$, where we remove all sites in $H_1$ from $G$, as well as the corresponding edges to a site in $H_1$. In the reduced graph $G_1$, let $H_2$ be the set of all sinks, and set $G_2:=G_1 \setminus H_2$. Continuing in the same way, we recursively obtain a partition of $V(G)$ into disjoint sets $H_i$ for $i=1,\dots,\hei$ with some $\hei=\hei(G)$; see Figure~\ref{fig:PosetHeight}. We call $\hei(G)$ the \textbf{height} of the graph $G$, and refer to $(H_i)_{i \in [\hei(G)]}$ as the \textbf{layers} of $G$.
Using the tower decomposition of $G$, we obtain a vector $H=(H_1,H_2,\dots,H_{\hei(G)})$ and set
\begin{equation}
h=h(G)=(h_{1},h_2,\dots,h_{\hei(G)}) := (|H_1|,|H_2|,\dots,|H_{\hei(G)}|) \, .
\end{equation} 
When keeping only the edges between sets $H_i$ and $H_{i+1}$, we call the resulting graph the \textbf{tower} $\sigma(G)$ of $G$, and we refer to $h=h(G)$ as the corresponding \textbf{tower vector}. With a slight abuse of notation, we write $h(\sigma)$ for the tower vector of a \DAG with tower $\sigma$, 
and set 
\begin{equation}
 |h|=|\sigma| = \sum_{i=1}^{\hei(G)} h_i
\end{equation} as the \textbf{size} of $G$. Note that the uniform measure $\P_n$ is supported on the set $\mathcal{D}_n$ of \DAGs of size $n$.  The following is a simple observation on this construction, so we omit the proof.

\begin{lemma}\label{lem:UniquenessDecomposition} Each \DAG $G$ belongs to a unique tower $\sigma(G)$, respectively a unique tower vector $h(G)$. For each $n\in \N$, the set of towers $\Sigma_n$, and the set of tower vectors $\Omega_n$, partition the set of all \DAGs $\mathcal{D}_n$ on $n$ sites. 
\end{lemma}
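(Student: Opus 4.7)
The plan is to verify, in order, that the tower decomposition procedure is well-defined, terminating, and produces a deterministic output, from which the partition statements follow formally. Since everything hinges on the recursion $G_i = G_{i-1} \setminus H_i$ producing a strictly smaller DAG at each step, the first step is to recall the standard fact that every finite non-empty DAG has at least one sink (otherwise following outgoing edges would, by the pigeonhole principle applied to a finite vertex set, produce a repeated vertex and hence a directed cycle). Consequently $H_i \neq \emptyset$ whenever $G_{i-1}$ is non-empty, so the procedure halts in exactly $\hei(G) \leq n$ steps.

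Next I would argue uniqueness by induction on $i$. Given $G$, the set $H_1$ of sinks is uniquely determined, and given $H_1, \dots, H_{i-1}$, the reduced graph $G_{i-1}$ is uniquely determined, so its sink set $H_i$ is too. Hence the layer sequence $(H_i)_{i \in [\hei(G)]}$, and therefore the tower vector $h(G) = (|H_i|)_i$, are deterministic functions of $G$. The tower $\sigma(G)$ is obtained from $G$ by deleting all edges not between consecutive layers, so it is also uniquely determined by $G$.

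For the partition statement, I would observe that by construction $H_i \subseteq V(G) \setminus (H_1 \cup \cdots \cup H_{i-1})$, so the layers are pairwise disjoint; and since the procedure terminates precisely when all vertices have been removed, their union equals $V(G)$. Thus $\sigma: \mathcal{D}_n \to \Sigma_n$ and $h: \mathcal{D}_n \to \Omega_n$ are well-defined (by definition surjective) maps, and the fibers of any surjective map partition its domain, giving the claimed partitions of $\mathcal{D}_n$.

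Honestly, there is no real obstacle here — the lemma is a bookkeeping statement whose content is that the greedy peeling of sinks is canonical — which is presumably why the authors omit the proof. The only point that requires any thought is the existence of a sink in every non-empty DAG, and even that is a one-line appeal to acyclicity.
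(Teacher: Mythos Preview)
Your proof is correct and complete. The paper in fact omits the proof entirely, stating only that ``the following is a simple observation on this construction, so we omit the proof''; your write-up supplies exactly the routine verification the authors had in mind, and there is nothing to compare.
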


We are interested in counting of the number of DAGs associated to a given tower~$\sigma$. For each tower $\sigma$, let $\mathcal{D}(\sigma)$ be the set of all \DAGs associated to $\sigma$. For each site $v\in V$, we let $d(v)$ denote the number of sites in $\sigma$, which are contained in layers above $v$, but not in the layer adjacent to $v$.

\begin{lemma}\label{lem:SkeletonCounting} For each labeled tower $\sigma$, we have that
\begin{equation}
| \mathcal{D}(\sigma)| = \prod_{v\in V(\sigma)} 2^{d(v)} \, .
\end{equation}
\end{lemma}
\begin{proof} Recall that we consider the set of labeled \DAGs on a given number of sites, and thus all sites are distinguishable. Fix some tower $\sigma$. Then each edge in the skeleton which connects two sites in adjacent layers is present for all graphs in $\mathcal{D}(\sigma)$. Between any two sites $v$ and $w$ in different layers which are not adjacent, adding or removing the edge between $v$ and $w$ does not change the underlying tower. This justifies the above formula.
\end{proof}

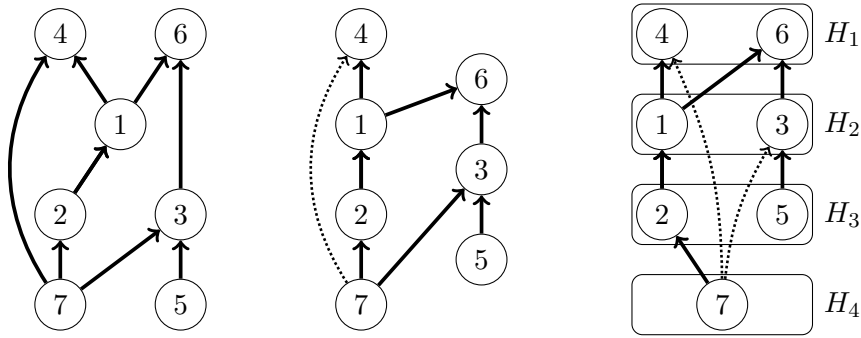
\begin{figure} 
\begin{center}
\begin{tikzpicture}[scale=0.38]

	\node[shape=circle,scale=1,draw] (A) at (-2,-1){$7$} ;
 	\node[shape=circle,scale=1,draw] (B) at (2,-1){$5$} ;
	\node[shape=circle,scale=1,draw] (C) at (-2,2){$2$} ;
 	\node[shape=circle,scale=1,draw] (D) at (2,2){$3$} ;	
 	\node[shape=circle,scale=1,draw] (E) at (0,5){$1$} ;
	\node[shape=circle,scale=1,draw] (F) at (-2,8){$4$} ;
 	\node[shape=circle,scale=1,draw] (G) at (2,8){$6$} ;

 	  	\draw[line width=1.4,->] (A) to (C);		
 	  	\draw[line width=1.4,->] (A) to (D);	
 	  	\draw[line width=1.4,->] (B) to (D);		
 	  	\draw[line width=1.4,->] (C) to (E);
	  	\draw[line width=1.4,->] (D) to (G);
	  	\draw[line width=1.4,->] (C) to (E);
	  		
	  	\draw[line width=1.4,->] (E) to (F);			
	  	\draw[line width=1.4,->] (E) to (G);	
	  	\draw[line width=1.4,->,bend left=33] (A) to (F);	
 
 \def\x{10};

	\node[shape=circle,scale=1,draw] (A) at (-2+\x,-1){$7$} ;
 	\node[shape=circle,scale=1,draw] (B) at (2+\x,0.5){$5$} ;
	\node[shape=circle,scale=1,draw] (C) at (-2+\x,2){$2$} ;
 	\node[shape=circle,scale=1,draw] (D) at (2+\x,3.5){$3$} ;	
 	\node[shape=circle,scale=1,draw] (E) at (-2+\x,5){$1$} ;
	\node[shape=circle,scale=1,draw] (F) at (-2+\x,8){$4$} ;
 	\node[shape=circle,scale=1,draw] (G) at (2+\x,6.5){$6$} ;

 	  	\draw[line width=1.4,->] (A) to (C);		
 	  	\draw[line width=1.4,->] (A) to (D);	
 	  	\draw[line width=1.4,->] (B) to (D);		
 	  	\draw[line width=1.4,->] (C) to (E);
	  	\draw[line width=1.4,->] (D) to (G);
	  	\draw[line width=1.4,->] (C) to (E);
	  		
	  	\draw[line width=1.4,->] (E) to (F);			
	  	\draw[line width=1.4,->] (E) to (G);	
	  	\draw[line width=1,->,bend left=33, densely dotted] (A) to (F);

  \def\x{20};
 
		\node[shape=circle,scale=1,draw] (A) at (0+\x,-1){$7$} ;
 	\node[shape=circle,scale=1,draw] (B) at (2+\x,2){$5$} ;
	\node[shape=circle,scale=1,draw] (C) at (-2+\x,2){$2$} ;
 	\node[shape=circle,scale=1,draw] (D) at (2+\x,5){$3$} ;	
 	\node[shape=circle,scale=1,draw] (E) at (-2+\x,5){$1$} ;
	\node[shape=circle,scale=1,draw] (F) at (-2+\x,8){$4$} ;
 	\node[shape=circle,scale=1,draw] (G) at (2+\x,8){$6$} ;

 	  	\draw[line width=1.4,->] (A) to (C);		
 	  	\draw[line width=1,->, densely dotted,bend left=12] (A) to (D);	
 	  	\draw[line width=1.4,->] (B) to (D);		
 	  	\draw[line width=1.4,->] (C) to (E);
	  	\draw[line width=1.4,->] (D) to (G);
	  	\draw[line width=1.4,->] (C) to (E);
	  		
	  	\draw[line width=1.4,->] (E) to (F);			
	  	\draw[line width=1.4,->] (E) to (G);	
        \draw[line width=1,->,bend right=12,densely dotted] (A) to (F);

 	\draw[rounded corners] (-3+\x,-2) rectangle (-3+\x+6,-2+2) {};
 	\draw[rounded corners] (-3+\x,-2+3) rectangle (-3+\x+6,-2+2+3) {};
 	\draw[rounded corners] (-3+\x,-2+9) rectangle (-3+\x+6,-2+2+9) {};
 	\draw[rounded corners] (-3+\x,-2+6) rectangle (-3+\x+6,-2+2+6) {}; 	

		\node[shape=circle,scale=1] (AX) at (2+\x+2,-1){$H_4$} ;
 	\node[shape=circle,scale=1] (BX) at (2+\x+2,2){$H_3$} ;
	\node[shape=circle,scale=1] (CX) at (2+\x+2,5){$H_2$} ;
 	\node[shape=circle,scale=1] (DX) at (2+\x+2,8){$H_1$} ;

%
%
%
%

\end{tikzpicture}
\end{center}
\caption{\label{fig:PosetHeight} Example of a \DAG with its poset and tower decomposition (the poset decomposition is defined in Section \ref{sec:PosetDecomposition}). The dotted edges are not present in the respective decomposition.} 
\end{figure}
\begin{remark}\label{rem:IndependentSamplingTower} Rephrasing Lemma \ref{lem:SkeletonCounting}, the above tower decomposition has the key property that when we condition on the underlying tower $\sigma$ in $\P_n$, each edge between two sites in layers of distance at least $2$ in the tower $\sigma$ is present independently with probability $\frac{1}{2}$.
\end{remark}

\section{Computation of the tower weights}\label{sec:DecompositionWeights}

In the following, our goal is to estimate the size of the tower for a \DAG on $n$ sites, drawn according to $\P_n$ for some $n\in \N$. We define the \textbf{weight} $w(\sigma)$ of a tower $\sigma$~as
\begin{equation}
w(\sigma) := \frac{|\mathcal{D}(\sigma)|}{Z^{(n)}} \quad \text{ with } \quad Z^{(n)}:=\sum_{\sigma \in \Sigma_n} |\mathcal{D}(\sigma)| \, .
\end{equation}  Similarly, with a slight abuse of notation, let $w(h^{\prime})$ be the \textbf{weight} of a tower vector $h^{\prime}$ with
\begin{equation}
w(h^{\prime}) := \sum_{ \sigma \in \Sigma_n \colon h(\sigma)=h^{\prime}} w(\sigma) \, ,
\end{equation} counting the number of \DAGs associated to a given tower vector suitably renormalized. The next characterization is a consequence of Lemma \ref{lem:SkeletonCounting} together with Remark~\ref{rem:IndependentSamplingTower}, and can be found for example in the proof of Theorem 2.1 in \cite{M:ShapeDAG}.
\begin{lemma}\label{lem:WeightEstimate}
For all $n,\ell\in \N$, all tower vectors $h^{\prime}=(h_1,h_2,\dots,h_\ell)$ with $|h^{\prime}|=n$
\begin{equation}
\P_{n}( h(G)= h^{\prime} ) = w(h^{\prime})  .
\end{equation} Furthermore, we have that 
\begin{equation}\label{eq:WeightEstimate}
w(h^{\prime})= \frac{1}{|\mathcal{D}_n|} \binom{n}{h_1,\, h_2, \, \dots, \, h_{\ell}}\prod_{i\in [\ell]}  2^{ h_i\sum_{j=1}^{i-1} h_j } \left( 1- 2^{-h_{i}}\right)^{h_{i+1}} \, ,
\end{equation} where we recall that $|\mathcal{D}_n|$ is the number of \DAGs on $n$ sites, and the second factor denotes the multinomial in the coefficients of $h$.
\end{lemma}

Note that the number of \DAGs corresponding to a given tower vector consisting of $n$ sites is maximized for a simple path $(v_1,v_2,\dots,v_n)$, where $d(v_i)=n-i-1$ for all $i\in [n]$. The following is a consequence of the above computations. We use the convention that $h_i(G)=0$ if $i \geq \hei(G)$.

\begin{proposition}\label{pro:LevelEstimate} For any $i\in \N$, for all $x \geq 5$, and all $n\in \N$ sufficiently large, 
\begin{equation}\label{eq:MultipleHeightPointsSingle}
\P_{n}( h_{i} \geq  x ) \leq 2^{- x^2/4 + 2} .
\end{equation} 
In particular, for $i_1,i_2,\dots,i_{k}\in \N$, for all $x_j \geq 5$ with $j\in [k]$, and all $n\in \N$ large enough,
\begin{equation}\label{eq:MultipleHeightPoints}
\P_{n}( \text{there exists some } k\in [j] \text{ such that } h_{i_k} \geq  x_{k} ) \leq 4 \sum_{k=1}^{j}2^{- x_k^2/4} .
\end{equation} 
\end{proposition}
\begin{proof} In the following, we identify each fixed height vector $h=(h_1,\dots,h_\ell)$ with $h_i=x$ for some fixed $x\in \N$ with a unique height vector $\tilde{h}$.  More precisely, for fixed $i\in [\ell]$, we consider the set of \DAGsc with respect to the vector
\begin{equation}\label{def:VectorExtension}
\tilde{h} := (h_1,h_2,\dots,h_{i-1},1,1,\dots,1,h_{i+1},h_{i+2},\dots,h_{\ell})
\end{equation} with $|h|=|\tilde{h}|$, i.e.\ we replace $h_i$ by $|h_i|$ many ones, and shift the remaining components.  We claim that for all $x\geq 3$, this new vector satisfies
\begin{equation}\label{eq:NewHeightVector}
\frac{w(h)}{w(\tilde{h})} \leq 2^{- x^2/4} \, ,
\end{equation} provided that $h_{i+1}\leq x^2/4$ holds. To see this,  by Lemma~\ref{lem:WeightEstimate}, whenever $h_{i+1}\leq x^2/4$
\begin{equation}\label{eq:WeightRatio}
\frac{w(h)}{w(\tilde{h})} \leq \frac{1}{x!} 2^{- \binom{x-1}{2} + h_{i+1}} \leq  2^{- x^2/4} 
\end{equation} holds for all $x\geq 3$. Thus, we obtain that
\begin{align}\label{eq:RecursiveheightVector}
\P_n(h_i \geq x ) &\leq \P_n\Big(h_{i+1}\geq \frac{x^2}{4}\Big) + \sum_{y\geq x} \P\Big(h_i = y \, \Big| \,  h_{i+1}\leq \frac{x^2}{4}\Big) \\ &\leq
\P\Big(h_{i+1} \geq \frac{x^2}{4}\Big) + 2 \cdot 2^{- x^2/4} \, , 
\end{align} recalling the convention that $h_{i+1}=0$ for all $i> \ell$. Using \eqref{eq:RecursiveheightVector}, we proceed by induction over $x \geq 5$, and show that 
\begin{equation}\label{eq:SingleHeightPoint}
\P_{n}(h_{i} \geq x ) \leq 2 (1+2^{-x^2/8}) 2^{- x^2/4} \, .
\end{equation} For all $x \geq n$, the inequality  \eqref{eq:SingleHeightPoint} trivially holds. Suppose that \eqref{eq:SingleHeightPoint} holds for all $y \in \{x+1,x+2,\dots\}$. Then we see by \eqref{eq:RecursiveheightVector} that
\begin{align*}
\P_n(h_i \geq x ) \leq 2 \cdot 2^{- x^2/4} + (1+2^{-x^4/128}) 2^{-x^4/64} \leq 2 (1+2^{-x^2/8}) 2^{- x^2/4}
\end{align*}
for all $x\geq 5$, which gives \eqref{eq:MultipleHeightPointsSingle}. This yields \eqref{eq:MultipleHeightPoints} by a union bound.
\end{proof}

Next, we bound the probability of having a single element in a given layer.

\begin{lemma}\label{lem:SingleGenerationHeightVector}
For any $i\in \N$ and for all $n\in \N$ sufficiently large, we have with $\P_n$-probability at least $\frac{1}{128}$ that $h_i \leq 1$ holds.
\end{lemma}
\begin{proof} Without loss of generality, assume that the event $\{\hei(G) \geq i \}$ holds with $\P_n$-probability at least $\frac{1}{2}$, as the claim is trivial, otherwise. 
Fix some $G$ drawn according to $\P_n$. When $i= \hei(G)$, the claim follows from \cite{L:MaximalVertices}, where the law of the number of sources and sinks in a \DAG is studied. For $i < \hei(G)$, observe that by Proposition \ref{pro:LevelEstimate}, we have that with $\P_n$-probability at least $\frac{1}{2}$, it holds that $\max(h_i,h_{i+1})\leq 4$ for all $n$ sufficiently large. Now let $X$ be the set of tower vectors $h\in \Omega_n$ such that $\max(h_i,h_{i+1})\leq 4$ and note that
\begin{equation}\label{eq:TowerVectorExtended}
\sum_{ h\in X} w(h) \geq \frac{1}{2} .
\end{equation} Next, let $M$ be the mapping that takes a tower vector $h$ and maps it to the tower vector $\tilde{h}$ as in \eqref{def:VectorExtension}, i.e.\ we replace the layer $H_i$ by $h_i$ many consecutive ones. Observe that the map $M$ is at most $4$ to $1$ on the domain $X$ by construction. Moreover, the relation  \eqref{eq:WeightRatio} in Proposition \ref{pro:LevelEstimate} ensures that for any $h \in X$
\begin{equation}
w\big(\tilde{h}\big) \geq \frac{1}{16} w(h) 
\end{equation} where $\tilde{h}=M(h)$.  Combining the above observations, we see that
\begin{align}
\P_n( h_i \leq 1 ) \geq \sum_{ \tilde{h}\in M(X)} w(\tilde{h}) \geq \frac{1}{4} \left( \min_{h \in X} \frac{w(M(h))}{w(h)} \right) \sum_{h \in X} w(h) \geq \frac{1}{4} \cdot \frac{1}{16}\cdot \frac{1}{2}  =  \frac{1}{128} .
\end{align} This finishes the proof.
\end{proof}

\begin{remark}\label{rem:AllGenerations}
The same arguments as in Lemma \ref{lem:SingleGenerationHeightVector} show that for any $k\geq 1$, there is a strictly positive probability $p_k>0$ to see exactly $k$ sites in a fixed layer $i$, provided that $n$ is sufficiently large.
\end{remark}

We remark that previously exact asymptotic bounds -- in terms of Gaussian tails -- were only known for the last layer using generating functions; see \cite{L:MaximalVertices}. Proposition  \ref{pro:LevelEstimate} yields Gaussian tails for any generation. Next, we consider the vertical distance until we reach a layer containing only a single element. We say that $H_j$ is a \textbf{regeneration layer} if and only if $h_j=1$. Moreover, we let $\tau_i$ be the $i^{\textup{th}}$ \textbf{regeneration point}, i.e.\ we set
\begin{equation}
\tau_1 := \min\{j\in [n]  \colon h_j=1\} \, ,
\end{equation}
and recursively 
\begin{equation}
\tau_{i+1} := \min \{ j > \tau_{i+1} \colon |h_j| = 1 \} \, ,
\end{equation} whenever there are at least $i+1$ many regeneration points in the tower for some $i\in \N$. 
Recall that $\Omega_n$ denotes the space of tower vectors of size $n$. In the following, it will be convenient to write $\Omega^{r}$ for the space of tower vectors with $r$ regeneration points, and we set
\begin{equation}\label{def:SpaceOfSizeAndRegeneration}
\Omega_{n}^{r} := \Omega_n \cap \Omega^{r}
\end{equation} as the space of tower vectors of size $n$ with exactly $r$ regeneration points. Moreover, we set \begin{equation}\label{def:SpaceOfAllHeightTowers}
\Omega:=\bigcup_{n\in \N} \Omega_n = \bigcup_{r \geq 0} \Omega^r
\end{equation} as the space of all tower vectors. The following result, which is a consequence of Theorem~3.3 in~\cite{M:ShapeDAG}, gives a bound on the total number of regeneration points $R(h)$ of a tower vector $h$.
\begin{lemma}\label{lem:NumberOfLevels}
There exist constants $c_1,c_2>0$ such that for all $s \in \R$
\begin{equation}
\lim_{n \rightarrow \infty} \P_n\left( \frac{R(h)-c_1n}{c_2 \sqrt{n}} \geq s \right) = \P(X \geq s) \, ,
\end{equation} where $X$ is a standard normal random variable.
\end{lemma}
We use the conventions that $\tau_0=0$, and $\tau_i=\hei(G)+1$ if $h$ contains less than $i$ regeneration points. Let $A_n$ be the event that the tower of a \DAG according to $\P_n$ has at least $\frac{1}{2}c_1 n$ many regeneration points, and note that by Lemma~\ref{lem:NumberOfLevels}
\begin{equation}\label{eq:NumberRegens}
\lim_{n \rightarrow \infty}\P_{n}(A_n) = 1 \, .
\end{equation}
We conclude this section with the following consequence of the above results on the distance between regeneration points, which is of independent interest.
\begin{corollary}\label{cor:DistanceOfLevels}
There exists some universal constant $c>0$ such that for all $n\in \N$ sufficiently large, and all $s \in \N$, we have that
\begin{equation}\label{eq:NextRegenPoint1}
\P_{n}(\tau_1 \geq s ) \leq \exp(-cs)
\end{equation} as well as for all $i \in \N$
\begin{equation}\label{eq:NextRegenPoint2}
\P_{n}(\tau_i-\tau_{i-1} \geq s ) \leq \exp(-cs) \, .
\end{equation} 
\end{corollary}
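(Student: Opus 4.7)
The plan is to prove both bounds via a conditional ``regeneration probability'' estimate combined with a geometric iteration. The key lemma I would establish first: for any past $(h_1,\dots,h_{i-1})$ with $m:=h_1+\cdots+h_{i-1}$ satisfying $n-m\geq n_0$ (where $n_0$ is the threshold from Lemma~\ref{lem:SingleGenerationHeightVector}), the conditional regeneration probability satisfies
\begin{equation*}
\P_n(h_i=1 \mid h_1,\dots,h_{i-1}) \geq \tfrac{1}{128}.
\end{equation*}

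To prove this, I would start from \eqref{eq:WeightEstimate} and factor $w(h)$ into a past part (depending only on $h_1,\dots,h_{i-1}$), a future part, and the coupling. The total future exponent $\sum_{j\geq i} h_j M_{i-1}=M_{i-1}(n-M_{i-1})$ is a constant that depends only on the past (since $\sum_{j\geq i}h_j=n-m$), so it cancels in normalization. The only remaining coupling between past and future is the factor $(1-2^{-h_{i-1}})^{h_i}$ for the new first future layer. Consequently, the conditional law of $(h_i,h_{i+1},\dots)$ given the past equals $\P_{n-m}$, re-weighted by $g\mapsto(1-2^{-h_{i-1}})^{g_1}\in[(1/2)^{g_1},1]$. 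Since the re-weight is bounded above by $1$ and its value at $g_1=1$ is at least $1/2$, and since Lemma~\ref{lem:SingleGenerationHeightVector} gives $\P_{n-m}(h_1=1)\geq 1/64$, we conclude
\begin{equation*}
\P_n(h_i=1\mid h_1,\dots,h_{i-1}) \;\geq\; \frac{\tfrac{1}{2}\cdot\tfrac{1}{64}}{1} \;=\; \tfrac{1}{128}.
\end{equation*}

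Next I would iterate to obtain \eqref{eq:NextRegenPoint1}. On $\{\tau_1\geq s\}$ we have $h_j\geq 2$ for all $j<s$. As long as $n-m_{i-1}\geq n_0$ throughout, the conditional bound yields $\P_n(\tau_1\geq s)\leq(127/128)^{s-1}\leq e^{-cs}$. To ensure the iteration never stalls, apply Proposition~\ref{pro:LevelEstimate} with $x_k=C\sqrt{s}$: with error at most $4s\cdot 2^{-C^2 s/4}\leq e^{-cs}$ all layers $h_j$ with $j<s$ satisfy $h_j\leq C\sqrt{s}$, so $m_{s-1}\leq Cs^{3/2}\leq n/2$ provided $s\leq c'n^{2/3}$. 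For $s>(n+1)/2$ the event is empty (since $\{\tau_1\geq s\}$ forces $n\geq 2(s-1)+1$), so the bound holds trivially. The intermediate window $c'n^{2/3}<s\leq (n+1)/2$ I would handle by observing that $\{\tau_1\geq s\}\subseteq\{R(h)\leq n-s+1\}$ (only non-regeneration layers can precede $\tau_1$), hence Lemma~\ref{lem:NumberOfLevels} combined with a straightforward refinement of its Gaussian tail estimate (or the direct weight formula of Lemma~\ref{lem:WeightEstimate}) yields exponential decay in this range, possibly with a smaller constant $c$.

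For \eqref{eq:NextRegenPoint2}, the same argument applies essentially verbatim after conditioning on the tower up through layer $\tau_{i-1}$. That layer has size $1$ by definition of a regeneration point, so the reweighting factor becomes $(1-2^{-1})^{g_1}=(1/2)^{g_1}\geq (1/2)^{g_1}$, and the same lower bound $1/128$ on the next-layer regeneration probability holds; iterating from $\tau_{i-1}$ gives $\P_n(\tau_i-\tau_{i-1}\geq s)\leq e^{-cs}$. The main obstacle is exactly the issue of bounding the iteration uniformly for $s$ comparable to $n$: the crude application of Lemma~\ref{lem:SingleGenerationHeightVector} only yields the lower bound once $n-m\geq n_0$, so one must either sharpen Lemma~\ref{lem:NumberOfLevels} to an exponential tail or use that $\{m_{s-1}>n-n_0\}$ forces a highly atypical tower shape (nearly all sites within the first $s-1$ layers), which is itself exponentially improbable by direct inspection of the weight formula. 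The remaining details are then routine algebra.
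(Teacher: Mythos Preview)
Your approach is essentially the paper's: establish a uniform conditional lower bound $\P_n(h_j\leq 1\mid h_1,\dots,h_{j-1})\geq\tilde c$ via Lemma~\ref{lem:SingleGenerationHeightVector}, then iterate to obtain geometric decay. The paper's argument is terser---it simply asserts that the conditional law of $h_j$ given the past depends only on the number of remaining sites and invokes Lemma~\ref{lem:SingleGenerationHeightVector} directly, glossing over both the $(1-2^{-h_{j-1}})^{h_j}$ coupling factor (which you correctly isolate and bound) and the threshold $n_0$. Your treatment of the reweighting is an improvement in rigor; however, your separate handling of the range $c'n^{2/3}<s\leq (n+1)/2$ via Proposition~\ref{pro:LevelEstimate} and a sharpening of Lemma~\ref{lem:NumberOfLevels} is more elaborate than needed and slightly shaky (Lemma~\ref{lem:NumberOfLevels} is a CLT, not an exponential tail bound). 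A cleaner fix is to note that once $n-m\leq n_0$ there are only finitely many tower-vector configurations on the remaining sites, so the conditional probability of $h_j\leq 1$ is bounded below by a positive constant depending only on $n_0$; hence the uniform bound $\tilde c$ really does hold for all $j$, and the geometric iteration never stalls.
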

\begin{proof} Fix some $n\in \N$ large enough. We claim that it suffices to show that for any fixed sequence of integers $(x_1,x_2,\dots,x_i)$ with $\sum_{k\in [i]} x_k < n$, we have
\begin{equation}\label{eq:ConditionedPolymer}
\P_{n}( h_{j+1} \leq 1 \,  | \, h_k=x_k \text{ for all } k\in [j-1]) \geq \tilde{c}
\end{equation} for some universal constant $\tilde{c}>0$, uniformly in $j$ and $n$. The bounds in \eqref{eq:NextRegenPoint1} and \eqref{eq:NextRegenPoint2} then follow from \eqref{eq:ConditionedPolymer} by revealing the entries of  the tower vector sequentially. To see that \eqref{eq:ConditionedPolymer} holds, notice from the construction of the weights in \eqref{eq:WeightEstimate} that the number of sites in layer $j$ depends only on the total number of sites, which were not assigned to the first $j-1$ layers. Thus, by Lemma \ref{lem:SingleGenerationHeightVector}, we get that layer $j$ contains with probability $\tilde{c}>0$ at most one element, uniformly in $n$ and $j$. This gives  \eqref{eq:ConditionedPolymer}.
\end{proof}

\section{A Decomposition of tower vectors}\label{sec:DecompositionOfTowers}

In the following, our goal is to use the tower decomposition in order to study  the structure of non-collider edges. Fix some tower vector $h$, and recall that $R(h)$ is the number of regeneration points in $h$. We denote by $(\mathcal{G}_i)_{i \in [R(h)] \cup \{0\}}$ the collection of layers  between regeneration points $\tau_{i}$ and $\tau_{i+1}$, with the convention that $\tau_0=0$. We call $\mathcal{G}_i$ the $i^{\textup{th}}$ \textbf{feature} in $h$; see Figure \ref{fig:FeatureDecomposition}. Whenever $\mathcal{G}_i$ is non-empty, we write
\begin{equation}
\mathcal{G}_i = (g^{i}_1,g^{i}_2,\dots,g^{i}_{\tau_{i+1}-\tau_{i}-1})
\end{equation} for integers $g_1^{i},g_2^{i},\dots, g^{i}_{\tau_{i+1}-\tau_{i}-1} \geq 2$. Further, let $\mathcal{L}(\mathcal{G}_i):=\tau_{i+1}-\tau_{i}-1$ be the \textbf{length}, and let $|\mathcal{G}_i|:= \sum^{\mathcal{L}(\mathcal{G}_i)}_{j=1} g^{i}_{j}$ be the \textbf{size} of $\mathcal{G}_i$. In the following, we change our perspective and construct a tower vector out of a family of i.i.d.\ features. With a slight abuse of notation, for a given feature $\mathcal{G}=(g_1,\dots,g_m)$, we let
\begin{equation}\label{eq:FeatureWeights}
w(\mathcal{G}) := 2^{-\binom{|\mathcal{G}|}{2}-|\mathcal{G}|}\prod_{i=1}^{m+1} \frac{1}{g_i!}\Big( 2^{ g_i\sum^{i-1}_{j=0}g_j}\Big)  (1-2^{-g_{i-1}})^{g_i}
\end{equation} denote the \textbf{weight} of $\mathcal{G}$, with the convention that $g_0=g_{m+1}=1$. Let $\Gamma$ denote the set of all features, including also the empty set $\emptyset$ with weight $w(\emptyset):=1$, and $|\mathcal{G}_i|=0$ if $i> R(h)$.
Intuitively, we obtain the above weight of a feature $\mathcal{G}$ by comparing the weight of the sequence $(g_1,g_2,\dots,g_m)$ inside a tower vector between two regeneration points to the weight of a tower vector when replacing the corresponding part $\mathcal{G}$ by a path of length $|\mathcal{G}|$. \\

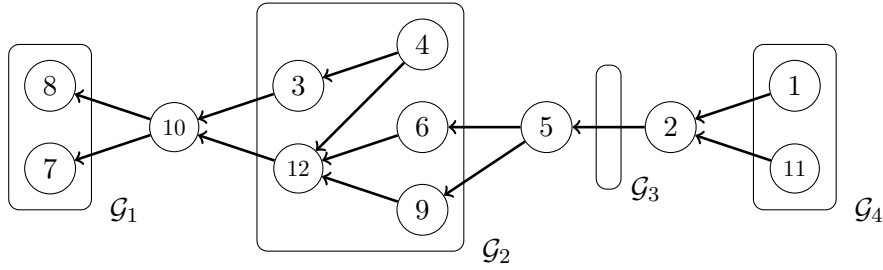
\begin{figure}
\begin{center}
\begin{tikzpicture}[scale=0.55]

	\node[shape=circle,scale=1,draw] (A) at (-6,-1){$7$} ;
 	\node[shape=circle,scale=1,draw] (B) at (-6,1){$8$} ;
	\node[shape=circle,scale=0.8,draw] (C) at (-3,0){$10$} ;
 	\node[shape=circle,scale=1,draw] (D) at (0,1){$3$} ;	
 	\node[shape=circle,scale=0.8,draw] (E) at (0,-1){$12$} ;
	\node[shape=circle,scale=1,draw] (F) at (3,2){$4$} ;
 	\node[shape=circle,scale=1,draw] (G) at (3,0){$6$} ; 
	\node[shape=circle,scale=1,draw] (H) at (3,-2){$9$} ;
 	\node[shape=circle,scale=1,draw] (I) at (6,0){$5$} ;
	\node[shape=circle,scale=1,draw] (J) at (9,0){$2$} ;
 	\node[shape=circle,scale=1,draw] (K) at (12,1){$1$} ;	 	
 	\node[shape=circle,scale=0.8,draw] (L) at (12,-1){$11$} ;
 	 	  	
 	  	\draw[line width=1,->] (C) to (A);		
 	  	\draw[line width=1,->] (C) to (B);	
 	  	\draw[line width=1,->] (D) to (C);		
 	  	\draw[line width=1,->] (E) to (C);

 	  	\draw[line width=1,->] (F) to (D);		
 	  	\draw[line width=1,->] (F) to (E);	
 	  	\draw[line width=1,->] (G) to (E);		
 	  	\draw[line width=1,->] (H) to (E);

 	  	\draw[line width=1,->] (I) to (G);		
 	  	\draw[line width=1,->] (I) to (H);
 	  	\draw[line width=1,->] (J) to (I);	  		

 	  	\draw[line width=1,->] (K) to (J);
 	  	\draw[line width=1,->] (L) to (J);	 
 	  	
\draw[rounded corners] (-7,-2) rectangle (-5,2) {}; 	 

\node[shape=circle,scale=1] (AX) at (-4.2,-2){$\mathcal{G}_1$} ;
 	  	
\draw[rounded corners] (-1,-3) rectangle (4,3) {}; 

\node[shape=circle,scale=1] (AX) at (4.8,-3){$\mathcal{G}_2$} ;

\draw[rounded corners] (7.2,-1.5) rectangle (7.8,1.5) {}; 	

\node[shape=circle,scale=1] (AX) at (8.4,-1.5){$\mathcal{G}_3$} ;

\draw[rounded corners] (11,-2) rectangle (13,2) {}; 	  

\node[shape=circle,scale=1] (AX) at (13.8,-2){$\mathcal{G}_4$} ;
 \end{tikzpicture}
\end{center}
\caption{\label{fig:FeatureDecomposition} Example of the decomposition of a tower into four features with $\mathcal{G}_0=(2)$, $\mathcal{G}_1=(2,3)$, $\mathcal{G}_2=\emptyset$, and $\mathcal{G}_3=(2)$. } 
\end{figure}

We show in Lemma \ref{lem:GadgetEquivalence} that the weight of a feature is   compatible with the weight of a tower vector defined in \eqref{eq:WeightEstimate}.
However, the above weight in \eqref{eq:FeatureWeights} is adjusted for wired boundaries, i.e.\ we are using the convention $g_0=g_{m+1}=1$. We will require slightly different weights to model the free boundaries in the features to the left of the first regeneration point and to the right of the last regeneration point. More precisely, we denote by 
\begin{equation}\label{def:FreeLeft}
\overleftarrow{w}(\mathcal{G}) := 2^{-\binom{|\mathcal{G}|}{2}}\prod_{i=2}^{m+1} \frac{1}{g_i!}\Big( 2^{ g_i\sum^{i-1}_{j=1} g_j}\Big) (1-2^{-g_{i-1}})^{g_i}
\end{equation} the weight of a feature $\mathcal{G}$ with \textbf{free left boundary} as well as by
\begin{equation}\label{def:FreeRight}
\overrightarrow{w}(\mathcal{G}) := 2^{-\binom{|\mathcal{G}|}{2}}\prod_{i=1}^{m} \frac{1}{g_i!}\Big( 2^{ g_i\sum^{i-1}_{j=0}g_j}\Big)  (1-2^{-g_{i-1}})^{g_i}
\end{equation} with weight of a feature $\mathcal{G}$ with \textbf{free right boundary}. Again, in both of the above cases, we use the convention that the  weight is set to $1$ whenever $\mathcal{G}$ is the empty feature. Moreover, for a feature $\mathcal{G}$ and some $\theta \in \R$, we set
\begin{equation}
w_{\theta}(\mathcal{G}) := \exp(- \theta |\mathcal{G}|) w(\mathcal{G}) \, ,
\end{equation} and similarly for $\overleftarrow{w_{\theta}}(\mathcal{G})$ and $\overrightarrow{w_{\theta}}(\mathcal{G})$, as its \textbf{tilted weight}. 
We obtain the following estimate on the tilted weights of all possible features.
\begin{lemma}\label{lem:PolymerInterpretation}
There exists some $\theta_0 < 0$ such that for all $\theta>\theta_0$, we have that
\begin{equation}\label{eq:Polymer1}
Z_{\theta}:=  \sum_{\mathcal{G} \in \Gamma} w_{\theta}(\mathcal{G}) < \infty \, .
\end{equation} Moreover, for all $\theta>\theta_0$, there exists some $\varepsilon_0=\varepsilon_0(\theta)>0$ such that for all $\varepsilon< \varepsilon_0$
\begin{equation}\label{eq:Polymer2}
Z_{\theta}^{-1}  \sum_{\mathcal{G} \in \Gamma}  \exp(\varepsilon|\mathcal{G}|)  w_{\theta}(\mathcal{G}) < \infty \, .
\end{equation} 
\end{lemma}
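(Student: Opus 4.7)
The plan is to derive a clean factorized upper bound on $w(\mathcal{G})$ and then recognize the polymer sum as a geometric series with ratio strictly less than $1$ at $\theta = 0$, extending by continuity to a small negative neighborhood.

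First, I would simplify the exponent of $2$ appearing in $w(\mathcal{G})$. Using the conventions $g_0 = g_{m+1} = 1$ and expanding the double sum $\sum_{i=1}^{m+1} g_i \sum_{j=0}^{i-1} g_j = \sum_{0 \leq j < i \leq m+1} g_i g_j$ gives
\[-\binom{G}{2} - G + \sum_{i=1}^{m+1} g_i \sum_{j=0}^{i-1} g_j \;=\; 1 + \tfrac{1}{2}\sum_{i=1}^m g_i(3-g_i),\]
where $G := |\mathcal{G}|$, so the power of $2$ in $w(\mathcal{G})$ already factorizes over the layers. Next, I extract further decay from the factors $(1 - 2^{-g_{i-1}})^{g_i}$: for $i=1$ the convention $g_0 = 1$ yields exactly $2^{-g_1}$; for $2 \leq i \leq m$ the hypothesis $g_{i-1} \geq 2$ gives $1 - 2^{-g_{i-1}} \leq 3/4$, bounding the factor by $(3/4)^{g_i}$; for $i = m+1$ one simply uses $(1 - 2^{-g_m}) \leq 1$. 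Combining yields the fully factorized bound
\[w(\mathcal{G}) \;\leq\; 2 \cdot \frac{2^{g_1(1-g_1)/2}}{g_1!} \prod_{i=2}^{m} \frac{2^{g_i(3-g_i)/2}\,(3/4)^{g_i}}{g_i!}.\]

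Summing $e^{-\theta G}w(\mathcal{G})$ over features, the $m = 0$ term contributes $1$, while the contribution from $m \geq 1$ equals $2 A(\theta) B(\theta)^{m-1}$ with
\[A(\theta) := \sum_{g \geq 2} \frac{e^{-\theta g}\, 2^{g(1-g)/2}}{g!}, \qquad B(\theta) := \sum_{g \geq 2} \frac{e^{-\theta g}\, 2^{g(3-g)/2}(3/4)^g}{g!}.\]
Both series converge for every $\theta \in \mathbb{R}$ since the factors $1/g!$ and $2^{-g^2/2}$ dominate $e^{-\theta g}$, and both are continuous and monotone decreasing in $\theta$. The main work is the quantitative check $B(0) < 1$: the $g=2$ and $g=3$ terms already give $B(0) \leq 9/16 + 9/128 + (\text{small rapidly decaying tail}) < 1$. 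By continuity there exists $\theta_0 < 0$ with $B(\theta_0) < 1$, and monotonicity then yields $B(\theta) < 1$ for all $\theta \geq \theta_0$. Summing the geometric series in $m$ gives $Z_\theta \leq 1 + 2 A(\theta)/(1 - B(\theta)) < \infty$, which is \eqref{eq:Polymer1}.

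For \eqref{eq:Polymer2}, I would use the tautological identity $\exp(\varepsilon |\mathcal{G}|) w_\theta(\mathcal{G}) = w_{\theta - \varepsilon}(\mathcal{G})$, so the sum in question equals $Z_{\theta - \varepsilon}$. Given $\theta > \theta_0$, setting $\varepsilon_0 := (\theta - \theta_0)/2 > 0$ ensures $\theta - \varepsilon > \theta_0$ for all $\varepsilon < \varepsilon_0$, and \eqref{eq:Polymer1} gives finiteness. The only real obstacle is the quantitative verification $B(0) < 1$; everything else is careful combinatorial bookkeeping of the exponent of $2$, a clean extraction of a geometric-decay factor from the hardcore-like weights $(1 - 2^{-g_{i-1}})^{g_i}$, and a standard tilting-and-continuity argument.
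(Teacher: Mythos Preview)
Your proof is correct and follows essentially the same strategy as the paper: simplify the exponent of $2$ using the square identity, obtain a factorized upper bound on $w(\mathcal{G})$, and sum a convergent series. The bookkeeping differs slightly --- the paper bounds $\sum_{|\mathcal{G}|=k} w(\mathcal{G})$ by $8\rho^k$ with $\rho=\sum_{x\geq 2}\frac{1}{x!}2^{-x^2/2+3x/2}(1-2^{-x})^2<3/4$ and then sums over $k$, whereas you treat the first layer separately and sum a geometric series in the number of layers $m$ with ratio $B(\theta)$. Your derivation of the exponent identity and extraction of decay from the $(1-2^{-g_{i-1}})^{g_i}$ factors is somewhat more explicit, and your reduction of \eqref{eq:Polymer2} to \eqref{eq:Polymer1} via $e^{\varepsilon|\mathcal{G}|}w_\theta(\mathcal{G})=w_{\theta-\varepsilon}(\mathcal{G})$ is cleaner than the paper's direct re-estimate.
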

\begin{proof} 
Using that $Z_{\theta}$ is monotone decreasing in $\theta$, it suffices to show that \eqref{eq:Polymer1} holds for some fixed $\theta < 0$. Using the facts that
\begin{equation}
\frac{1}{2}(|\mathcal{G}|+1)^2 = \frac{1}{2}\left( \sum_{i=1}^{m+1} g_i^2 \right) +  \left( \sum_{i=1}^{m+1} g_i \sum_{j=1}^{i-1} g_j  \right)
\end{equation} and that $g_i \geq 2$ for all $i\in [m]$ whenever $\mathcal{G}$ is non-empty, we get  for all $\mathcal{G} \neq \emptyset$ that
\begin{equation}
w(\mathcal{G}) \leq 8 \prod_{i=1}^{m} \frac{1}{g_i!}2^{\frac{3}{2}g_i- \frac{1}{2}g_i^2 }(1-2^{-g_i})^2 \, .
\end{equation} Summing now over all $\mathcal{G}$ with $|\mathcal{G}|=k$ for some fixed $k \geq 2$, we get that
\begin{equation}\label{eq:ProductEstimate}
\sum_{ \mathcal{G} \colon |\mathcal{G}|=k} w(\mathcal{G}) \leq 8 \Big(\sum_{x \geq 2} \frac{1}{x!}2^{-\frac{1}{2}x^2 +\frac{3}{2}x}(1-2^{-x})^2  \Big)^k \, .
\end{equation} 
A computation now shows that the right-hand side of \eqref{eq:ProductEstimate} is less than $8 \cdot (\frac{3}{4})^k$ for all $k \geq 2$, allowing us to conclude \eqref{eq:Polymer1}.
Moreover, since we have that 
\begin{equation}
\exp(\varepsilon |\mathcal{G}|) w_{\theta} = w_{\theta-\varepsilon} , 
\end{equation}
 the statement \eqref{eq:Polymer2} follows immediately for $\varepsilon < \theta - \theta_0$. 
\end{proof}

\begin{corollary}\label{cor:ExtensionToBoundaryWeights}
For all $\theta>\theta_0$ with $\theta_0$ from Lemma \ref{lem:PolymerInterpretation}, it holds that
\begin{equation}\label{eq:FreeBoundarySums}
\sum_{\mathcal{G} \in \Gamma} |\mathcal{G}|\overleftarrow{w_{\theta}}(\mathcal{G}) < \infty  \quad \text{ and } \quad \sum_{\mathcal{G} \in \Gamma} |\mathcal{G}|\overrightarrow{w_{\theta}}(\mathcal{G}) < \infty  .
\end{equation}
\end{corollary}
\begin{proof}
Observe that we have for all features $\mathcal{G}=(g_1,\dots,g_j)\in\Gamma$ with $j\in \N$ 
\begin{equation}
\overleftarrow{w_{\theta}}(\mathcal{G}) \leq 2^{g_1} w_{\theta}(\mathcal{G})  .
\end{equation} Since the weights $w_{\theta}(\mathcal{G})$ are super-exponentially decaying in the size of the first component $g_1$, this bounds the first sum in \eqref{eq:FreeBoundarySums}. A similar argument holds with respect to $\overrightarrow{w_{\theta}}(\mathcal{G})$.
\end{proof}

The above construction of features has the advantage that we can define for all $\theta \geq 0$ the law $\mu_{\theta}$ on $\Gamma$ by
\begin{equation}\label{def:Mumeasure}
\mu_{\theta}(\mathcal{G}) := Z^{-1}_{\theta} w_{\theta}(\mathcal{G})  ,
\end{equation} and similarly the laws $\overleftarrow{\mu_\theta}$ and $\overrightarrow{\mu_\theta}$. 
We compose now the above features in order to describe a corresponding tower vector. More precisely, we define a tower vector $\mathcal{X}$ with a given number of regeneration points $r=R(\mathcal{X})$ using a collection of (potentially empty) features $\mathcal{G}_0,\mathcal{G}_1,\dots,\mathcal{G}_{r-1},\mathcal{G}_r$ according to $\mu_{\theta}$ for $\mathcal{G}_i$ with $i\in [r-1]$, according to $\overleftarrow{\mu_\theta}$ for $\mathcal{G}_0$, and according to $\overrightarrow{\mu_\theta}$ for $\mathcal{G}_r$. We denote by $\Q^{r}_{\theta}$ the corresponding law of a tower vector $\mathcal{X}$ on the space $\Omega^{r}$ under this construction. 
We have the following key result on the sampling of features for a given total size and a given number of regeneration points.

 \begin{lemma}\label{lem:GadgetEquivalence}
Fix some tower vector $\tilde{h}$ with $r$ regeneration points and size $n$, i.e.\ the corresponding tower vector belongs to $\Omega_{n}^{r}$. Then for any $\theta$ such that $\Q^{r}_{\theta}$ is well-defined
\begin{equation}
\P_n( h = \tilde{h}\,  | \, R(h)=r) = \Q^{r}_{\theta}\big( \mathcal{X}=\tilde{h} \, \big| \,  |\mathcal{X}|=n  \big) \, .
\end{equation} 
\end{lemma}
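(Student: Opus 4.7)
The plan is to show that both sides of the claimed identity are probability measures on the finite set $\Omega_n^r$ whose unnormalized weights are proportional. By Lemma \ref{lem:WeightEstimate} we have $\P_n(h = \tilde{h}) = w(\tilde{h})$, so the left-hand side $\P_n(h = \tilde{h} \mid R(h) = r)$ is proportional to $w(\tilde{h})$ on $\Omega_n^r$. From the construction of $\Q_\theta^r$,
\[
\Q_\theta^r(\mathcal{X} = \tilde{h}) \,\propto\, \overleftarrow{w_\theta}(\mathcal{G}_0(\tilde{h})) \prod_{k=1}^{r-1} w_\theta(\mathcal{G}_k(\tilde{h})) \,\overrightarrow{w_\theta}(\mathcal{G}_r(\tilde{h})),
\]
and conditioning on $|\mathcal{X}| = n$ restricts this law to $\Omega_n^r$ as well. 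The first observation I would exploit is that $|\mathcal{G}_0| + |\mathcal{G}_1| + \cdots + |\mathcal{G}_r| = n - r$ is constant on $\Omega_n^r$, so the tilting factor $\exp(-\theta(n-r))$ is the same for every $\tilde{h} \in \Omega_n^r$ and drops out of the normalization. It therefore suffices to prove that the ratio
\[
R(\tilde{h}) \,:=\, \frac{w(\tilde{h})}{\overleftarrow{w}(\mathcal{G}_0) \prod_{k=1}^{r-1} w(\mathcal{G}_k) \,\overrightarrow{w}(\mathcal{G}_r)}
\]
depends only on $n$ and $r$, not on the internal structure of $\tilde{h}$.

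To verify this factorization I would expand $w(\tilde{h})$ using Lemma \ref{lem:WeightEstimate} and match it term by term with the feature-weight definitions in \eqref{eq:FeatureWeights}--\eqref{def:FreeRight}. The multinomial $\binom{n}{h_1, \ldots, h_\ell}$ factors as $n!$ times $\prod_k \prod_i 1/g_i^k!$ (regeneration layers contribute trivially), and the latter product is exactly supplied by the multinomial parts of the feature weights. The adjacency factor $\prod_{i=2}^\ell (1 - 2^{-h_{i-1}})^{h_i}$ is local: internal pairs of consecutive layers inside a feature are handled by that feature's own product, while pairs bridging a feature and an adjacent regeneration are captured by the $i = 1$ or $i = m+1$ boundary terms of $w(\mathcal{G}_k)$ (or $\overleftarrow{w}$, $\overrightarrow{w}$), under the convention $g_0 = g_{m+1} = 1$ that encodes the flanking regenerations of size $1$.

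The more delicate piece is the interaction factor $2^{\sum_{i > j} h_i h_j}$, which couples across features. Here the plan is to invoke the identity
\[
\sum_{i > j} h_i h_j \,=\, \tfrac{1}{2}\bigl(n^2 - \textstyle\sum_i h_i^2\bigr) \,=\, \tfrac{1}{2}\bigl(n^2 - r - \textstyle\sum_k \sum_i (g_i^k)^2\bigr)
\]
to split the exponent into a within-feature part (matching the $2^{g_i \sum_{j<i} g_j}$ contributions inside each feature weight) plus a residual depending only on $n$, $r$, and $\sum_k |\mathcal{G}_k|^2$. The $|\mathcal{G}_k|^2$-dependence is then cancelled by the prefactors $2^{-\binom{|\mathcal{G}_k|}{2}}$ present in every feature weight, leaving a function of $n$ and $r$ alone. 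The main obstacle will be this careful bookkeeping: correctly distributing every cross-feature and feature--regeneration interaction between the feature weights and the $(n,r)$-dependent residual, and verifying that the cancellation is exact. Once this is done, $R(\tilde{h})$ is constant on $\Omega_n^r$, consistent with the intuition stated after \eqref{def:FreeRight} that each feature weight measures the relative cost of replacing a path of single-element layers by $\mathcal{G}_k$, and the lemma follows.
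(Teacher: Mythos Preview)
Your proposal is correct and follows essentially the same route as the paper. The paper's proof is terser: it states that the product of tilted feature weights satisfies $W_r(\mathcal{X}) = w(\mathcal{X})/w(h_{\mathds{1}}^n)$, where $h_{\mathds{1}}^n$ is the all-one tower vector of length $n$, and notes that this constant does not depend on $\tilde h$ (or even on $r$); this is exactly your claim that the ratio $R(\tilde h)$ is constant on $\Omega_n^r$, with the constant identified explicitly. Your more detailed plan to split $\sum_{i>j} h_i h_j$ and cancel the $\sum_k |\mathcal{G}_k|^2$ contribution against $\sum_k \binom{|\mathcal{G}_k|}{2}$ is precisely the ``direct computation'' the paper alludes to, and your acknowledgement that the boundary/cross-feature bookkeeping is the main labor is accurate.
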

\begin{proof}
Observe that the measure
\begin{equation}
\Q^{r}_{\theta}\big( \mathcal{X} \in \cdot \, \big| \,  |\mathcal{X}|=n  \big) 
\end{equation} is supported on $\Omega_{n}^{r}$ and does not depend on $\theta$. Fix some $\theta> \theta_0$ with $\theta_0$ from Lemma \ref{lem:PolymerInterpretation}. The probability to see $\mathcal{X}$ according to $\Q^{r}_{\theta}$ with features $(\mathcal{G}_i)_{i\in \{0\} \cup[r]}$ is proportional to 
\begin{equation}
W_r(\mathcal{X}) := \overleftarrow{w_0}(\mathcal{G}_0) \left( \prod_{i=1}^{r-1} w_{0}(\mathcal{G}_i) \right) \overrightarrow{w_0}(\mathcal{G}_r) \, .
\end{equation} 
Recall the weight $w(\mathcal{X})$ from \eqref{eq:WeightEstimate} for a tower vector $\mathcal{X} \in \Omega^{r}_n$. Let $h_{\mathds{1}}^{n}$ be the all one vector of length $n$, and note that the vector $h_{\mathds{1}}^{n}$ and its weight $w(h_{\mathds{1}}^{n})$ do not depend on $r$. We claim that on the space $\Omega_n$ for some fixed $n\in \N$
\begin{equation}\label{eq:TowerIdentity}
W_{r}(\mathcal{X}) = \frac{w(\mathcal{X})}{w(h_{\mathds{1}}^{n})} \, .
\end{equation}
Indeed, note that for $\mathcal{X}=h_{\mathds{1}}^{n}$, the claim \eqref{eq:TowerIdentity} follows from the definition of $W_{r}$ as a product over the weight of empty features. It thus remains to argue that $W_{r}(\mathcal{X})$ is proportional to $w(\mathcal{X})$ for all $\mathcal{X} \in \Omega_{n}^{r}$. This follows by comparing the feature weights in \eqref{eq:FeatureWeights}, \eqref{def:FreeLeft}, and \eqref{def:FreeRight} to the number of \DAGs for a given tower in \eqref{eq:WeightEstimate}. Hence, we get from \eqref{eq:TowerIdentity} that
 \begin{equation}
\P_n\big( \mathcal{X} \in A \, \big| \,  R(\mathcal{X})=r  \big) = \frac{1}{\tilde{Z}_{r,n}}\sum_{\sigma \in A} W_{r}(\sigma) 
\end{equation} for some suitable normalizing constant $\tilde{Z}_{r,n}$, and all subsets $A \subseteq \Omega_{n}^{r}$. Since $\P_n\big( \cdot \,  | \,  R(\mathcal{X})=r  \big)$ is supported on $\Omega_{n}^{r}$, this finishes the proof.
\end{proof}

In the following, we use this change of measure together with Lemma \ref{lem:PolymerInterpretation} in order to analyze the tower decomposition. 
We will use the following lemma which allows us to apply Lemma \ref{lem:PolymerInterpretation} for some suitable choice of $\theta>0$. We denote by $\tilde{\mathbb{Q}}_0^{r}$ the measure where we sample a vector $\tilde{\mathcal{X}}=(\mathcal{X}_i)_{i \in [r]}$ on the space $\Gamma^r$ of $r$ features, each of them independently according to the measure $\mu_0$. 

\begin{lemma}\label{lem:FixTheta} Recall the constant $c_1>0$ from Lemma \ref{lem:NumberOfLevels}. The function 
\begin{equation}\label{eq:StrictIncrease}
\theta \mapsto \sum_{\mathcal{G} \in \Gamma} |\mathcal{G}| \mu_{\theta}(\mathcal{G})
\end{equation}
is continuous and strictly decreasing for all $\theta \geq 0$. Moreover, there exists some $\theta_{\ast}>0$ with
\begin{equation}\label{eq:ThetaStar}
\sum_{\mathcal{G} \in \Gamma} |\mathcal{G}| \mu_{\theta_{\ast}}(\mathcal{G}) = \frac{1-c_1}{c_1}  .
\end{equation} 
\end{lemma}
\begin{proof} The first part \eqref{eq:StrictIncrease} is immediate from Lemma \ref{lem:PolymerInterpretation}. In the following, we let $d_1$ be defined such that 
\begin{equation}\label{eq:Theta0Case}
\sum_{\mathcal{G} \in \Gamma} |\mathcal{G}| \mu_{0}(\mathcal{G}) = \frac{1-d_1}{d_1} \, . 
\end{equation} We will argue in the following that $d_1<c_1$. Since the right-hand side in \eqref{eq:StrictIncrease} goes to $0$ for $\theta \rightarrow \infty$, this yields \eqref{eq:ThetaStar} using the continuity and monotonicity in \eqref{eq:StrictIncrease}. Using the notation $|\tilde{\mathcal{X}}| := \sum_{i=1}^{r} |\mathcal{X}_i|$, observe that
\begin{equation}\label{eq:TranslateToPartition}
\sum_{r \geq 1} \sum_{\tilde{\mathcal{X}} \in \Gamma^{r}} w(\tilde{\mathcal{X}}) \mathds{1}_{\{|\tilde{\mathcal{X}}|=n-r\}} \ = \sum_{r \geq 1} Z_0^{r} \tilde{\mathbb{Q}}^{r}_0(|\tilde{\mathcal{X}}|=n-r) 
\end{equation} with $Z_0$ from \eqref{eq:Polymer1}. Intuitively, the expressions in \eqref{eq:TranslateToPartition} correspond to the expected weight of all features. In order to show that $d_1<c_1$, we claim that it suffices to argue that for some $\varepsilon>0$ 
\begin{equation}\label{eq:targetEquation}
\sum_{r \leq (1+\varepsilon)d_1n} Z_0^{r} \tilde{\mathbb{Q}}^{r}_0(|\tilde{\mathcal{X}}|=n-r) \leq \exp(-cn ) \sum_{r \geq 1} Z_0^{r} \tilde{\mathbb{Q}}^{r}_0(|\tilde{\mathcal{X}}|=n-r)
\end{equation} holds with some constant $c=c(\varepsilon)>0$, and all $n$ sufficiently large. To see that \eqref{eq:targetEquation} implies $d_1<c_1$, we argue with a proof by contradiction. Assume that $d_1 = c_1$ as the case $d_1 > c_1$ is similar. Then the expression \eqref{eq:TranslateToPartition} yields that with probability tending to $1$ as $n$ goes to infinity, we must have that $\tilde{\mathcal{X}} \in \Omega_n$, drawn proportionally to the weights $w(\tilde{\mathcal{X}})$, has at least $(1+\varepsilon)c_1n$ many features. However, this is a contradiction to Lemma \ref{lem:NumberOfLevels}. \\

In order to show that \eqref{eq:targetEquation} indeed holds, we distinguish two cases. First, we consider all values of $r\leq (1-\varepsilon)d_1 n$. Then since $Z_0>1$, using a large deviation principle for the i.i.d.\ random variables $(|\mathcal{X}_i|)_{i \in [r]}$, recalling that the random variables have a finite exponential moment by Lemma \ref{lem:PolymerInterpretation}, we see that
\begin{align}\label{eq:Moderate1}
\sum_{r \leq (1-\varepsilon)d_1n} Z_0^{r} \tilde{\mathbb{Q}}^{r}_0(|\tilde{\mathcal{X}}|=n-r) \leq n \exp(-C_1n) Z^{\lfloor d_1n(1-\varepsilon/2) \rfloor}_{0} .
\end{align} for some constant $C_1>0$, and all $n$ sufficiently large.
Note that the random variables $(|\mathcal{X}_i|)_{i \in [r]}$ with mean $(1-d_1)/d_1$ have full support on $\N$ by Remark \ref{rem:AllGenerations}, and thus satisfy a local central limit theorem; see Chapter 2 in \cite{LL:RWIntroduction} for a more detailed discussion. This allows us to bound the probability of $\tilde{\mathbb{Q}}^{r}_0(|\tilde{\mathcal{X}}|=n-r)$ from below to see that
\begin{align}\label{eq:Moderate2}
\exp(-C_1n) Z^{\lfloor d_1n(1-\varepsilon/2) \rfloor}_{0} \leq \frac{1}{n} \exp(-C_2n) \sum_{r \geq \lfloor d_1n(1-\varepsilon/2) \rfloor} Z^{r}_{0} \tilde{\mathbb{Q}}^{r}_0(|\tilde{\mathcal{X}}|=n-r)
\end{align} for some constant $C_2>0$, and all $n$ sufficiently large. Combining now \eqref{eq:Moderate1} and \eqref{eq:Moderate2}, we conclude that
\begin{align}\label{eq:ModerateAll}
\sum_{r \leq (1-\varepsilon)d_1n} Z_0^{r} \tilde{\mathbb{Q}}^{r}_0(|\tilde{\mathcal{X}}|=n-r) \leq  \exp(-C_2n) \sum_{r \geq \lfloor d_1n(1-\varepsilon/2) \rfloor} Z^{r}_{0} \tilde{\mathbb{Q}}^{r}_0(|\tilde{\mathcal{X}}|=n-r) . 
\end{align}
Next, using again that the random variables $(|\mathcal{X}_i|)_{i \in [r]}$ have some finite exponential moment by Lemma \ref{lem:PolymerInterpretation}, a moderate deviation estimate shows that for all $\delta>0$ sufficiently small and $n$ large enough
\begin{equation}\label{eq:ModerateDeviationApprox}
\tilde{\mathbb{Q}}_0^{\lfloor nd_1(1+\delta) \rfloor}\left(|\tilde{\mathcal{X}}| = n- \lfloor nd_1(1+\delta)\rfloor) \right) \geq  \exp(-C_3\delta^2 n)
\end{equation} 
for some constant $C_3>0$; see \cite{DA:ModerateDeviations} for a reference on moderate deviations. As $Z_0>1$, we use \eqref{eq:ModerateDeviationApprox} for  $r=\lfloor(1+\delta)d_1 n \rfloor$ in the sum on the right-hand side of \eqref{eq:targetEquation} for sufficiently small $\delta>0$, and choose $\varepsilon=\delta/2$ on the left-hand side of \eqref{eq:targetEquation} to  conclude.
\end{proof}

 We have now all tools to show the convergence of the last entries in the tower vector.

\begin{lemma}\label{lem:GadgetSize}
We have for any fixed $k\in \N$ and $s_i \geq 0$ with $i\in [k]$ that
\begin{equation}\label{eq:LimitExistsGadget}
\lim_{n \rightarrow \infty} \P_{n}( h_{\hei(G)+1- i} \geq s_{i} \text{ for all } i\in [k] ) = \P(Y_i \geq s_i \text{ for all } i\in [k] )
\end{equation} for some almost surely finite random variables $(Y_i)_{i\in [k]}$ with full support on $\N \cup \{ 0 \}$.
\end{lemma}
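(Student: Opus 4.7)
The plan is to invoke the change of measure from Lemma~\ref{lem:GadgetEquivalence} at $\theta=\theta_\ast$, which rephrases the question as one about the final few features of an i.i.d.\ sequence conditioned on its total size, and then to strip the conditioning via a local limit theorem.

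Fix $\theta=\theta_\ast$ from Lemma~\ref{lem:FixTheta}, calibrated so that $1+\sum_{\mathcal{G}\in\Gamma}|\mathcal{G}|\mu_{\theta_\ast}(\mathcal{G}) = 1/c_1$. By Lemma~\ref{lem:GadgetEquivalence}, conditionally on $R(h)=r$ the law of $h$ under $\P_n$ equals that of $\mathcal{X}$ under $\Q^{r}_{\theta_\ast}(\,\cdot\,\mid|\mathcal{X}|=n)$, where $\mathcal{X}$ is built from independent features $\mathcal{G}_0\sim\overleftarrow{\mu_{\theta_\ast}}$, $\mathcal{G}_1,\dots,\mathcal{G}_{r-1}\sim\mu_{\theta_\ast}$, $\mathcal{G}_r\sim\overrightarrow{\mu_{\theta_\ast}}$, separated by $r$ regeneration points each contributing a layer of size~$1$. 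The calibration ensures that for the typical value $r\approx c_1 n$ from Lemma~\ref{lem:NumberOfLevels}, the expected total size of $\mathcal{X}$ is $n+O(1)$, so the conditioning is unbiased in the bulk. Crucially, the last $k$ entries $(h_T,h_{T-1},\dots,h_{T-k+1})$ of the tower vector are determined by $\mathcal{G}_r$ read in reverse, followed by the value~$1$ for the regeneration point $\tau_r$, then $\mathcal{G}_{r-1}$ reversed, and so on, for as many features as it takes to cover the first $k$ entries.

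Next I would apply a local limit theorem for sums of independent integer-valued random variables. Lemma~\ref{lem:PolymerInterpretation} and Corollary~\ref{cor:ExtensionToBoundaryWeights} provide exponential moments for $|\mathcal{G}|$ under all three measures $\mu_{\theta_\ast}$, $\overleftarrow{\mu_{\theta_\ast}}$, $\overrightarrow{\mu_{\theta_\ast}}$, so the local CLT applies to the integer-valued variable $|\mathcal{X}|-r$. For any fixed $j\geq 0$ and finite features $\mathcal{G}^\ast_r,\dots,\mathcal{G}^\ast_{r-j}$ of total size $t$, both numerator and denominator of
\begin{equation*}
\frac{\Q^{r}_{\theta_\ast}\bigl(|\mathcal{X}|=n \,\bigm|\, \mathcal{G}_{r-i}=\mathcal{G}^\ast_{r-i},\; i\leq j\bigr)}{\Q^{r}_{\theta_\ast}(|\mathcal{X}|=n)}
\end{equation*}
behave like the same Gaussian density of order $r^{-1/2}$ centered at $r\cdot(1-c_1)/c_1$, and their ratio converges to $1$ uniformly for $r$ in a window $c_1 n \pm C\sqrt{n\log n}$ and $t$ bounded. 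Consequently,
\begin{equation*}
\Q^{r}_{\theta_\ast}\bigl(\mathcal{G}_{r-i}=\mathcal{G}^\ast_{r-i},\; i\leq j\,\bigm|\,|\mathcal{X}|=n\bigr) \longrightarrow \overrightarrow{\mu_{\theta_\ast}}(\mathcal{G}^\ast_r)\prod_{i=1}^{j}\mu_{\theta_\ast}(\mathcal{G}^\ast_{r-i}).
\end{equation*}
Averaging over $r$ against the Gaussian concentration of $R(h)$ from Lemma~\ref{lem:NumberOfLevels} and discarding $\{r<c_1 n/2\}$ via the event $A_n$ in~\eqref{eq:NumberRegens}, the same limit holds under $\P_n$.

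Choosing $j$ large enough that, with high probability under the product law, the first $k$ reversed entries of the tower vector are contained within $\mathcal{G}_r,\dots,\mathcal{G}_{r-j}$ and the $j$ separating regeneration points (possible because each feature has finite expected size, so $O(k)$ features suffice up to an event of arbitrarily small probability), this yields~\eqref{eq:LimitExistsGadget}, with $(Y_i)_{i\in[k]}$ defined by the reversed independent concatenation. These $Y_i$ are almost surely finite since each feature is a.s.\ finite by Lemma~\ref{lem:PolymerInterpretation}, and they have full support because every block value $s\geq 2$ occurs with positive weight as a first or last entry of some $\mu_{\theta_\ast}$- or $\overrightarrow{\mu_{\theta_\ast}}$-feature, while the value $s=1$ arises at every regeneration point and from the positive mass of the empty feature. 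The main obstacle is the uniformity of the local CLT over $r$ in the stated $\sqrt{n\log n}$-window while accommodating the mixed boundary laws $\overleftarrow{\mu_{\theta_\ast}},\overrightarrow{\mu_{\theta_\ast}}$; this is a standard but careful application of local CLT techniques, made possible by the uniform exponential moment bounds supplied by Lemma~\ref{lem:PolymerInterpretation} and Corollary~\ref{cor:ExtensionToBoundaryWeights}.
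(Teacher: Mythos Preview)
Your proposal is correct and follows essentially the same route as the paper: change measure via Lemma~\ref{lem:GadgetEquivalence}, remove the conditioning $\{|\mathcal{X}|=n\}$ by a local CLT for the i.i.d.\ feature sizes, and average over $r$ using Lemma~\ref{lem:NumberOfLevels}. The one notable difference is that the paper does not fix $\theta=\theta_\ast$ throughout but instead, for each $r$ in the window $I_n=[c_1n-n^{2/3},c_1n+n^{2/3}]$, picks $\theta=\theta(n,r)$ calibrated so that $\E_{\Q^r_\theta}[|\mathcal{X}|]=n$ exactly and then proves $\theta(n,r)\to\theta_\ast$; this centers the local CLT precisely at the mean and thereby sidesteps the moderate-deviation uniformity issue you correctly flag at the end of your sketch.
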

\begin{proof}
Note that by Proposition \ref{pro:LevelEstimate} and Remark \ref{rem:AllGenerations}, it remains to show that the above limit in \eqref{eq:LimitExistsGadget} exists. To do so, we argue that for every fixed length $k$, the law of the last $k$ entries in the tower vector converges as the total number of sites goes to infinity. We strongly rely on Lemma \ref{lem:GadgetEquivalence} in order to work with the law $\Q_{\theta}^r$ instead of $\P_{n}$, allowing for some additional flexibility in $\theta$. To formalize this, fix some $k\geq 1$ and  $\mathbf{x}=(x_0,x_1,\dots,x_k)$ with $x_j \in \Gamma$ for all such $j$, i.e.\ $\mathbf{x}$ is a family of features. We denote by $|\mathbf{x}|=|x_0|+|x_1|+\cdots+|x_{k-1}|+|x_{k}|$ the total of number of sites in these features.
Let $\mathcal{X}$ with features $(\mathcal{G}_i)_{i\in  \{0\} \cup[r]}$ be sampled according to $\Q_{\theta}^r$ on the space $\Omega^{r}$ for some fixed $r$, and for some $\theta \geq 0$. We define 
\begin{equation*}
A_{n,r}^{\mathbf{x}}:= \left\{ \mathcal{X} \in \Omega_{n}^{r} \text{ and } \mathcal{G}_0=x_0 \text{ and } \mathcal{G}_{r-k+i}=x_{i} \text{ for all } i\in [k] \text{ and }  \sum_{i=1}^{r-k} |\mathcal{G}_i| = n-|\mathbf{x}| -r \right\}
\end{equation*} as the event that the first feature, as well as the $k$ last features, agree with the collection of features $\mathbf{x}$, and that the sample $\mathcal{X}$, counting features and regeneration points, has total size $n$. Similarly, we let 
\begin{equation*}
B_{\mathbf{x}}:= \left\{ \mathcal{G}_{R(\mathcal{X})-k+i}=x_i \text{ for all } i\in [k] \text{ and } \mathcal{G}_{0}=x_0\right\}
\end{equation*} be the event that the last $k$ features in the sampled tower vector correspond to the last $k$ features in $\mathbf{x}$, and the first feature agrees with $x_0$. Note that $B_{\mathbf{x}}$ is also naturally defined as an event on the space $\Omega$, requiring at least $k$ regeneration points. Recall $\mu_\theta$ from \eqref{def:Mumeasure}. To simplify notation, we set 
\begin{equation}
\mu_{\theta}(\mathbf{x}) := \overleftarrow{\mu_{\theta}}(x_0)\left(\prod_{i=1}^{k-1}{\mu_{\theta}(x_i)}\right) \overrightarrow{\mu_{\theta}}(x_r) \, .
\end{equation} 
Next, recall from Lemma \ref{lem:NumberOfLevels} that the number of regeneration points $R(\cdot)$ according to $\P_n$ converges in the leading order to $c_1n$ for some universal constant $c_1 \in (0,1)$. 
%
%
When the event $A_n$ from \eqref{eq:NumberRegens} holds, note that in order to prove \eqref{eq:LimitExistsGadget}, 
it suffices to show that for any $\mathbf{x}=(x_0,x_1,x_2,\dots,x_k)$ with $x_i \in \Gamma$ for all such $i$, we have that
\begin{equation}\label{eq:ExpliciteConvergence}
\lim_{n\rightarrow \infty} \P_{n}(B_{\mathbf{x}}) =\mu_{\theta_\ast}(\mathbf{x}) 
\end{equation} for the constant $\theta_{\ast}$ from Lemma \ref{lem:FixTheta}. 
Fix $\varepsilon>0$ and let $I_n := [c_1 n - n^{2/3},c_1 n + n^{2/3}]$. Then for all $n$ sufficiently large, we get by Lemma \ref{lem:NumberOfLevels} that
\begin{equation}
\big| \P_n(h \in B_{\mathbf{x}}\, | \, R(h) \in I_n ) - \P_n(h  \in B_{\mathbf{x}} ) \big| \leq  \varepsilon \, .
\end{equation}
Moreover, by Lemma \ref{lem:GadgetEquivalence}, we have for any $\theta \geq 0$
\begin{align}
 \P_n( h \in B_{\mathbf{x}} \, | \, R(h) \in I_n ) &= \sum_{r\in I_n} \P_n( h\in B_{\mathbf{x}} \, | \, R(h)=r )  \P_n( R(h)=r \, | \, R(h) \in I_n ) \\
 &= \sum_{r\in I_n} \Q_{\theta}^r\big( h \in B_{\mathbf{x}} \, \big| \, |h|=n  \big)  \P_n( R(h)=r \, | \, R(h) \in I_n ) \, . \label{eq:BxDecomposition}
\end{align}
Next, we claim that for all $n\in \N$ sufficiently large, and every $r\in I_n$, there exists some $\theta=\theta(n,r)\geq 0$ such that
\begin{equation}\label{eq:ThetaConstructed}
\sum_{\mathcal{X} \in \Gamma^{r}} |\mathcal{X}|\mathbb{Q}_{\theta}^r(\mathcal{X}) = n \, .
\end{equation} 
 Moreover, it holds that
\begin{equation}\label{eq:ThetaWellEstimated}
\lim_{n \rightarrow \infty} \sup_{r \in  I_n} \Big| \theta(n,r)- \theta_{\ast} \Big| = 0 
\end{equation} for the constant $\theta_{\ast} > 0$ from \eqref{eq:ThetaStar}. The two  statements in \eqref{eq:ThetaConstructed} and \eqref{eq:ThetaWellEstimated}  follow from Lemma~\ref{lem:FixTheta} and the central limit theorem in Lemma~\ref{lem:NumberOfLevels} on the number of regeneration points,  together with Corollary~\ref{cor:ExtensionToBoundaryWeights} in order to control the first and the last feature for fixed $\theta \geq 0$. 
We claim it suffices for \eqref{eq:ExpliciteConvergence} to show that for every fixed $\mathbf{x}$
\begin{equation}\label{eq:ApproxQr}
\lim_{n \rightarrow \infty} \sup_{r \in  I_n} \big| \Q_{\theta}^r\big( h \in B_{\mathbf{x}} \, \big| \, |h|=n  \big) - \mu_{\theta_{\ast}} (\mathbf{x}) \big| = 0 \, .
\end{equation} This follows directly by inserting \eqref{eq:ApproxQr} into \eqref{eq:BxDecomposition}.
%
%
%
 We prove \eqref{eq:ApproxQr} by showing that
\begin{equation}\label{eq:FractionConvergence}
\lim_{n \rightarrow \infty } \sup_{r \in I_n}\left| \frac{\Q_{\theta}^r\big( h \in B_{\mathbf{x}} \, \big| \, |h|=n  \big)}{\Q_{\theta}^r\big( h \in B_{\mathbf{y}} \, \big| \, |h|=n  \big)} - \frac{\mu_{\theta_{\ast}}(\mathbf{x})}{\mu_{\theta_{\ast}}(\mathbf{y})} \right| = 0 
\end{equation} for any two vectors $\mathbf{x}$ and $\mathbf{y}$ of length $k$. 
Let us stress at this point that it is crucial to consider $\theta=\theta(n,r)$ in \eqref{eq:FractionConvergence}, as we will require that $\Q_{\theta}^r(  \sum_{j=1}^{r-k} \mathcal{G}_j =n -x )$ is concentrated for slightly varying values of $x$, when $n$ is sufficiently large.
In the following, to show \eqref{eq:FractionConvergence}, it is helpful to interpret the measures $\Q_{\theta}^r( \, \cdot \, | \, |h|=n  )$ and $\mu_{\theta_{\ast}}$ as probability measures on the space $\Gamma^{k+1}$, summing over all possible choices for a given $\mathbf{x} \in \Gamma^{k+1}$ for the first, and the last $k$ features.
Recall the event $A_{n,r}^{\mathbf{x}}$, and note that 
\begin{equation}
\Q_{\theta}^r\big( h \in B_{\mathbf{x}} \, \big| \, |h|=n  \big) = \Q_{\theta}^r( A_{n,r}^{\mathbf{x}} ) / \Q_{\theta}^r( |h|=n ) \, .
\end{equation}
From the definition of $A_{n,r}^{\mathbf{x}}$ and the construction of $\Q_{\theta}^r$, we see that 
\begin{equation}
\Q_{\theta}^r( A_{n,r}^{\mathbf{x}}) =  \mu_{\theta}(\mathbf{x}) \Q_{\theta}^r\left( \sum_{j=1}^{r-k} \mathcal{G}_j = n- |\mathbf{x}| -r \right)  .
\end{equation} A similar decomposition holds with respect to $\mathbf{y}$. Recall Lemma \ref{lem:PolymerInterpretation} to bound the first and second moment of $|\mathcal{G}_j|$, and that $|\mathcal{G}_j|$ has full support on $\N \cup \{0\}$ by Remark~\ref{rem:AllGenerations}. Using~\eqref{eq:ThetaConstructed}, a local central limit theorem for the random variables $(|\mathcal{G}_j|)_{j \in [r-k]}$ ensures that for any $\varepsilon>0$, there exists some $N=N(\varepsilon,\mathbf{x},\mathbf{y})$ such that for all $n\geq N$, 
\begin{equation}\label{eq:LocalCLT}
 1- \varepsilon \leq  \inf_{r \in I_n} \frac{ \Q_{\theta}^r\big( \sum_{j=1}^{r-k} \mathcal{G}_j = n- |\mathbf{x}| \big)}{\Q_{\theta}^r\big( \sum_{j=1}^{r-k} \mathcal{G}_j = n- |\mathbf{y}| \big)} \leq    \sup_{r \in I_n} \frac{ \Q_{\theta}^r\big( \sum_{j=1}^{r-k} \mathcal{G}_j = n- |\mathbf{x}| \big)}{\Q_{\theta}^r\big( \sum_{j=1}^{r-k} \mathcal{G}_j = n- |\mathbf{y}| \big)} \leq 1 + \varepsilon \, .
\end{equation} Moreover,  by \eqref{eq:ThetaWellEstimated}, the feature distribution $\mu_{\theta}$ with $\theta=\theta(n,r)$ satisfies for fixed $\mathbf{x}$ and $\mathbf{y}$
\begin{equation}\label{eq:FeatureConvergenceTheta}
\lim_{n\rightarrow \infty} \inf_{r \in I_n}\frac{\mu_{\theta}(\mathbf{x})}{\mu_{\theta}(\mathbf{y})} = \lim_{n\rightarrow \infty} \sup_{r \in I_n}\frac{\mu_{\theta}(\mathbf{x})}{\mu_{\theta}(\mathbf{y})} =  \frac{\mu_{\theta_{\ast}}(\mathbf{x})}{\mu_{\theta_{\ast}}(\mathbf{y})} \, .
\end{equation} Combining \eqref{eq:LocalCLT} and \eqref{eq:FeatureConvergenceTheta}, this gives \eqref{eq:FractionConvergence}, and thus finishes the proof.
\end{proof}


\section{Counting the number of colliders in a uniformly sampled \DAG}\label{sec:NumberColliders}

We now show that the number of non-collider edges $|C(G)|$, i.e.\ the number of edges which  are not part of any $v$-structure, in a uniformly chosen \DAGc  as well as the ratio between essential \DAGs and \MECs with respect to the total number of \DAGs on $n$ sites, converges as $n$ goes to infinity. Recall the tower decomposition from Section \ref{sec:HeightDecomposition},  and set
\begin{equation}
L(G) := \sup\left\{ i \geq 0 \colon \exists v \in H_{\hei(G)-i} \text{ such that } (u,v) \in C(G) \text{ for some } u\in V(G)  \right\}
\end{equation} to be the last tower generation containing a site of a non-collider edge. Here, we use the convention $L(G)=-1$ if each edge in $G$ is part of a collider, i.e.\ $G$ is essential.

\begin{lemma}\label{lem:ExpectedNonCollider} 
There exists some non-negative, almost surely finite random variable $X$ such that for all $s\geq 0$
\begin{equation}\label{eq:CountingLastColliderGeneration}
\limsup_{n \to \infty} \P_{n}\left( L(G) \geq s \right) \leq \P(X \geq s)  .
\end{equation}
Moreover, there exists a non-negative, almost surely finite $Y$ such that for all $s\geq 0$
\begin{equation}\label{eq:CountingNonCollidersUpper}
\limsup_{n \to \infty} \P_{n}( |C(G)| \geq s) \leq \P(Y \geq s)  .
\end{equation}  
\end{lemma}
\begin{proof} Recall that we assign for each \DAG $G$ a unique tower vector $h=(h_{1},h_{2},\dots,h_{\hei(G)})$. For all $\varepsilon>0$, we define the event
\begin{equation}
\mathcal{B}_{\varepsilon} := \bigcup_{i \in \N} \big\{ h_{(\hei(G)-i)} \geq \log(i) + 2 \log(\varepsilon^{-1}) +5 \big\}\, ,
\end{equation} with the convention that $h_{j}=0$ for all $j<1$. By Proposition \ref{pro:LevelEstimate}, we get that
\begin{equation}
\limsup_{n \rightarrow \infty}\P_n( \mathcal{B}_{\varepsilon}) \leq  \varepsilon \exp(-5) \sum_{i=1}^{\infty} \exp(-\log^{2}(i)) < \varepsilon \, .
\end{equation} Thus, we get the lower bound 
\begin{equation}\label{eq:AllGenerationsGood}\liminf_{n \rightarrow \infty}\P_n( h_{(\hei(G)-i)} \leq \log(i) + 2 \log(\varepsilon^{-1}) +5 \text{ for all } i\in \N ) \geq 1- \varepsilon \, 
\end{equation} Let $\mathcal{B}^{\textup{c}}_{\varepsilon}$ denote the complement of the event $\mathcal{B}_{\varepsilon}$. Recall that we have for each pair of sites $v_i\in H_i$ and $v_j \in H_j$ with $i \geq j + 2$ that the edge $e=(v_i,v_j)$ is present independently with probability $\frac{1}{2}$. Thus, for each $v\in H_{\hei(G)-k}$ and $w\in H_{\hei(G)-j}$ for some $k\geq 3$ and $j<k$, 
\begin{equation}
\limsup_{n \rightarrow \infty}\P_n( (w,v) \in  C(G) \, | \, \mathcal{B}^{\textup{c}}_{\varepsilon} ) \leq \Big( \frac{3}{4} \Big)^{k-3}  .
\end{equation}
We obtain that for all $j\in \N$ and $\varepsilon>0$
\begin{align*}
\limsup_{n \rightarrow \infty}\P_n( L(G) \geq j ) &\leq   \limsup_{n \rightarrow \infty} \P_n( L(G) \geq j \,  | \, \mathcal{B}^{\textup{c}}_{\varepsilon} ) + \varepsilon \leq \varepsilon + \Big(2 \log\Big(\frac{1}{\varepsilon}\Big) +5\Big)^{2} \sum_{k \geq j}k\Big( \frac{3}{4} \Big)^{k-3}  
\end{align*} using a union bound. Choosing now $j=j(\varepsilon)$ sufficiently large, this gives the first bound \eqref{eq:CountingLastColliderGeneration}. For the second bound \eqref{eq:CountingNonCollidersUpper}, note that 
 \begin{equation}
\left\{ |C(G)| \leq \prod^{j}_{i=1} (\log(i)+2 \log(\varepsilon^{-1}) +5)  \right\} \supseteq  \{ L(G) \leq j \} \cap \mathcal{B}^{\textup{c}}_{\varepsilon} \, 
 \end{equation} which, when combined with \eqref{eq:CountingLastColliderGeneration} and \eqref{eq:AllGenerationsGood},  allows us to conclude.
\end{proof}

Next, we show that with positive $\P_n$-probability, uniformly in $n$, we see in fact only collider edges in the sampled \DAGp
\begin{lemma}\label{lem:SeeNoCollider}
It holds that
\begin{equation}
\liminf_{n \to \infty} \P_{n}( |C(G)| = 0) > 0 \, .
\end{equation}
\end{lemma}
\begin{proof}
Recall that $\mathcal{D}_n$ denotes the set of all labeled \DAGs on  $n$ sites. By Lemma \ref{lem:ExpectedNonCollider}, and the fact that a uniformly drawn \DAG from $\mathcal{D}_n$ has with probability tending to $1$ as $n$ goes to infinity no isolated sites, there exists some $J>0$ such that
\begin{equation*}
\mathcal{D}^{J}_n :=  \big\{ G \in \mathcal{D}_n \colon L(G) < J \text{ and } |h_{\hei(G)-k}| \leq J  \  \forall  k\in [J] \text{ and } \textup{deg}(v)\geq 1 \  \forall v \in V(G) \big\} 
\end{equation*} satisfies $|\mathcal{D}^{J}_n| \geq \frac{1}{2}|\mathcal{D}_n|$ for all $n$ sufficiently large. Consider now the mapping $f \colon \mathcal{D}^{J}_n \mapsto \mathcal{D}_n$ where each \DAG $G$ in the set $\mathcal{D}^{J}_n$ gets mapped to a \DAG $G^{\prime}$, which we obtain from $G$ by removing all edges $e=(v,w)$ with $v\in h_{\hei(G)-i}$ and $w\in h_{\hei(G)-j}$ for some $i,j \leq J$. We claim that for each graph $G \in \mathcal{D}^{J}_n$, the corresponding graph $G^{\prime}=f(G)$ has only collider edges. To see this, note that we remove all non-collider edges in $G$ in this way, while in this construction no new non-collider edges are created. Note that $|f^{-1}(G^{\prime})| \leq 2^{J^4}$  for all $G^{\prime}\in \mathcal{D}_n$ as we have at most $J^2$ many sites, and hence at most $J^4$ many different edges in the last $J$ layers of any graph $G \in \mathcal{D}^{J}_n$, while no other vertices yield isolated sites in~$G^{\prime}$. 
Thus, the $\P_n$-probability of $|C(G)|=0$ is at least $\frac{1}{2} 2^{-J^4}>0$ for all $n$ sufficiently large.
\end{proof}

We have now all tools to show Theorem \ref{thm:Essential} and Theorem \ref{thm:AverageEssentialMEC}.

\begin{proof}[Proof of Theorem \ref{thm:Essential} and Theorem \ref{thm:AverageEssentialMEC}]
We start by showing that the limits in \eqref{eq:CountingNonColliders} and \eqref{eq:SeeingNonColliders}, as well as the limits in \eqref{eq:AverageMEC} and \eqref{eq:AverageEssential} exist. Fix some $\varepsilon>0$, and recall that by Lemma \ref{lem:ExpectedNonCollider}, there exists some $J=J(\varepsilon)$ such that $\P_{n}(L(G)\geq J)< \varepsilon$ for all $n$ sufficiently large. Further, recall from Lemma \ref{lem:GadgetSize} that the law of the last $J$ layers under $\P_n$ converges when $n$ goes to infinity. As a consequence, we see that the law of the edges between all sites in the last $J$ layers on the tower converges to some law $\nu_J$.  On the event $\{ L(G) < J\}$, the Markov equivalence class of a \DAG $G$ is fully determined by its  subgraph of the last $J$ layers, and we obtain the existence of the constants $c_{\textup{MEC}}$ and $c_{\textup{Ess}}$ in \eqref{eq:AverageMEC} and \eqref{eq:AverageEssential}. It remains to argue that $c_{\textup{MEC}}, c_{\textup{Ess}} \in (0,1)$. We will only argue that $c_{\textup{Ess}}>0$ as the remaining cases are similar. Let $C_{\ast}$ be the number of non-collider edges according to $\nu_J$.
Notice that on the event $\{ L(G) < J\} $, we have that $C(G)$ depends only on the edges between sites in the last $J$ layers of the tower. Thus,  using Lemma \ref{lem:ExpectedNonCollider}, for each $s\geq 0$
\begin{equation}\label{eq:ConvergenceRatios}
\limsup_{n \rightarrow \infty} |\P_n( |C(G)|\geq s) - \nu_J( C_{\ast} \geq s) | \leq \varepsilon \, .
\end{equation} Moreover, using Lemma \ref{lem:SeeNoCollider}, we obtain that for any $\varepsilon>0$, we can choose $J=J(\varepsilon)$ sufficiently large such that 
\begin{equation}
c_{\textup{Ess}} \geq \nu_{J}( C_{\ast} = 0) - \varepsilon  > 0
\end{equation} holds, which finishes the proof.
\end{proof}

\section{Representations of Markov equivalence classes}\label{sec:RepresentationOfMECs}

In this section, we collect facts and estimates on directed acyclic graphs and the size of Markov equivalence classes, which we will use in the proof of Theorem \ref{thm:Moments}. We start with a poset decomposition, which can be found for example in Section~3 of \cite{BT:NoteMECS}.

\subsection{A poset decomposition for \DAGs} \label{sec:PosetDecomposition}

For a given \DAG $G$, we define a partial order on its vertex set $V$ as follows. We let $v \succeq w$ for $v,w\in V$ if and only if there exists a directed path in $G$ from $w$ to $v$. We call the resulting poset $(V,\succeq)$ the \textbf{reachability poset} of $G$. We say that $v$ \textbf{covers} $w$ if $v \succeq w$, and there is no $u\in V$ such that $v \succeq u \succeq w$. For each $v\in V$, let $\text{dom}(v)$ be the number of sites below $v$ according to $\succeq$, and let $\text{cov}(v)$ be the number of sites covered by $v$. For a given poset $(V,\succeq)$, we set 
\begin{equation}\label{def:PosetWeights}
w_{\succeq}:= \prod_{v\in V} 2^{\text{dom}(v)-\text{cov}(v)}  \, ,
\end{equation} and we partition the \DAGs according to their reachability posets; see Proposition 16 in~\cite{BT:NoteMECS}. For the sake of completeness, we include the following lemma on the number of  \DAGs for a given reachability poset. 

\begin{lemma}\label{lem:PosetDecomposition}
For a given labeled poset $(V,\succeq)$, there are exactly $w_{\succeq}$ many \DAGs which have $(V,\succeq)$ as their reachability poset, and the set of all reachability posets partitions the set of labeled directed acyclic graphs.
\end{lemma}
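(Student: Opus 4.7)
The plan is to describe explicitly which edge sets on $V$ give rise to a DAG with reachability poset equal to $(V, \succeq)$, and then count them. Fix such a poset, and consider any DAG $G$ whose reachability poset equals $\succeq$. I would first establish a trichotomy on ordered pairs of distinct vertices $(w,v)$. (i) If $v$ covers $w$ in $\succeq$, then the edge $(w,v)$ must belong to $G$: since $v \succ w$ in $\succeq_G$, there is some directed path $w = u_0 \to \cdots \to u_k = v$ in $G$, and as $\succeq_G = \succeq$ every intermediate $u_i$ satisfies $w \preceq u_i \preceq v$; the covering assumption forces $u_i \in \{w,v\}$, so $k=1$. (ii) If $w \prec v$ but $v$ does not cover $w$, the edge $(w,v)$ is optional: an intermediate $u$ with $w \prec u \prec v$ yields, inductively on $\text{dom}(v)$, a path from $w$ to $v$ composed only of covering edges, so the direct edge is not needed; moreover, inserting it creates no new reachability between already-comparable vertices. (iii) For any pair of incomparable vertices neither directed edge can appear, and for comparable pairs $w \prec v$ the reverse edge $(v,w)$ is likewise forbidden, since it would force $v \prec_G w$, contradicting $\succeq_G = \succeq$.

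Next, I would verify the converse: any edge set $E \subseteq \{(w,v) : w \prec v\}$ that contains every covering pair produces a DAG with reachability poset exactly $\succeq$. Acyclicity is automatic because every edge points from a $\prec$-smaller to a $\prec$-larger vertex. Reachability in $G$ can only produce pairs $w \prec v$, since edges join comparable vertices; conversely the mandatory covering edges together with transitivity realize all of $\succeq$, so $\succeq_G = \succeq$. The count follows: the only freedom lies on the optional pairs, and grouping them by their larger endpoint gives
\[
\sum_{v \in V}\bigl(\text{dom}(v) - \text{cov}(v)\bigr)
\]
optional pairs in total, producing $2^{\sum_v (\text{dom}(v) - \text{cov}(v))} = w_{\succeq}$ admissible DAGs. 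The partition claim is then immediate, since every $G \in \mathcal{D}_n$ has a uniquely defined reachability poset $\succeq_G$, and the map $G \mapsto \succeq_G$ partitions $\mathcal{D}_n$ into the fibers indexed by the posets that arise.

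The argument is elementary and I do not expect a substantive obstacle. The only point that deserves care is the inductive realization used in (ii), namely that in a finite poset every strict comparability $w \prec v$ is witnessed by a chain of covering relations; this underlies both the "optionality" of non-covering edges and the fact that no extra constraints are hidden in the count $w_{\succeq}$.
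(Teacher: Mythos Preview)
Your proof is correct and complete. The paper does not supply its own argument for this lemma; it simply cites Proposition~16 of \cite{BT:NoteMECS}, so there is nothing in-paper to compare against. Your reasoning---mandatory covering (Hasse) edges, forbidden edges between incomparable or reversely ordered pairs, and free binary choice on the remaining strictly comparable non-covering pairs---is the standard one and is exactly what underlies the cited result (and is structurally the same as the short proof the paper does give for the analogous tower count in Lemma~\ref{lem:SkeletonCounting}).
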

A similar relation is given in Proposition 17 of  \cite{BT:NoteMECS} to count the number of essential \DAGsp In particular, note that we call a vertex $v$ a source, respectively a sink, if it is a minimal, respectively a maximal, element in the reachability poset.

\subsection{Partially directed acyclic graphs} \label{sec:PDAGs}

Note that every \MEC can be represented as a partially directed acyclic graph (\textsf{PDAG}), i.e.\ the edges may be either directed or undirected such that the graph contains no directed cycles. For a given \DAG $G$, we obtain its corresponding \PDAG by keeping the order of an edge if and only if it has the same orientation in each element of $M(G)$, and let it be undirected, otherwise. While we always obtain a \PDAG in this construction, Andersson et al. provide necessary and sufficient conditions for a given partially directed acyclic graph to yield a Markov equivalence class \cite{AMP:MECsForDAG}. These \PDAGs corresponding to a specific Markov equivalence class are also called \textbf{essential graphs}. The following lemma is a simple observation between non-collider edges and Markov equivalence classes using essential graphs.
\begin{lemma} \label{lem:PDACnoncollider}
Let $e$ be an undirected edge in an essential graph. Then for all \DAGs in the corresponding Markov equivalence class, we have that $e$ is a non-collider edge.
\end{lemma}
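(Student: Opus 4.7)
The plan is a short argument by contradiction, proceeding entirely from the definitions. I would suppose the undirected edge $e = \{u,v\}$ of the essential graph is a collider edge in some \DAG $G'$ belonging to the corresponding Markov equivalence class $M(G)$. By definition of a collider edge, $e$ then participates in some $v$-structure $(a,b,c)$ of $G'$, where $c\in\{u,v\}$ is the collider and both $(a,c),(b,c)\in E(G')$ while $(a,b),(b,a)\notin E(G')$. Since $e$ itself is one of the two directed edges of this $v$-structure meeting at $c$, the other endpoint of $e$ must be $a$ or $b$, say $a$, so that $e$ is oriented as $(a,c)$ in $G'$.

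The key step is then to invoke the defining property of a Markov equivalence class: by construction, every $G'' \in M(G)$ has the same skeleton and the same set of $v$-structures as $G'$. In particular, the $v$-structure $(a,b,c)$ is present in every $G'' \in M(G)$, which forces the edge $e$ to be oriented as $(a,c)$ in each such $G''$. By the construction of the essential graph recalled before the lemma, an edge whose orientation is common to every element of $M(G)$ is recorded as a \emph{directed} edge of the essential graph, contradicting the hypothesis that $e$ is undirected there.

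I do not expect any real obstacle; the statement is essentially a tautology once the notions of $v$-structure, Markov equivalence class, and essential graph are unpacked. The only point requiring mild care is to check that when $e$ is a collider edge, it is necessarily one of the two directed edges of the witnessing $v$-structure meeting at the collider (so that its orientation is pinned down by the $v$-structure data preserved across $M(G)$), rather than some unrelated edge; this is immediate from the definition of ``collider edge'' as an edge belonging to a $v$-structure.
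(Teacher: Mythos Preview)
Your argument is correct and is exactly the paper's approach, just spelled out in more detail: the paper simply notes that every edge which is part of a $v$-structure has the same orientation in every \DAG of its Markov equivalence class, so by the construction of the essential graph such an edge would be directed. Your unpacking of the definitions to reach this conclusion is accurate and there is no gap.
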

\begin{proof}
This follows immediately from the construction of the essential graph since every edge which is part of a $v$-structure has the same orientation in every \DAG of its Markov equivalence class.
\end{proof}
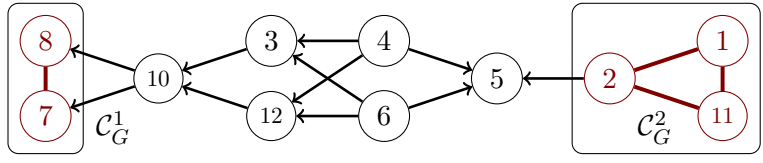
\begin{figure}
\begin{center}
\begin{tikzpicture}[scale=0.5]

	\node[shape=circle,scale=1,draw,my-red] (A) at (-6,-1){$7$} ;
 	\node[shape=circle,scale=1,draw,my-red] (B) at (-6,1){$8$} ;
	\node[shape=circle,scale=0.8,draw] (C) at (-3,0){$10$} ;
 	\node[shape=circle,scale=1,draw] (D) at (0,1){$3$} ;	
 	\node[shape=circle,scale=0.8,draw] (E) at (0,-1){$12$} ;
	\node[shape=circle,scale=1,draw] (F) at (3,1){$4$} ;
 	\node[shape=circle,scale=1,draw] (G) at (3,-1){$6$} ; 
 	\node[shape=circle,scale=1,draw] (I) at (6,0){$5$} ;
	\node[shape=circle,scale=1,draw,my-red] (J) at (9,0){$2$} ;
 	\node[shape=circle,scale=1,draw,my-red] (K) at (12,1){$1$} ;	 	
 	\node[shape=circle,scale=0.8,draw,my-red] (L) at (12,-1){$11$} ;
 	 	  	
 	  	\draw[line width=1,->] (C) to (A);		
 	  	\draw[line width=1,->] (C) to (B);	
 	  	\draw[line width=1.5,my-red] (A) to (B);	
 	  	
 	  	\draw[line width=1,->] (D) to (C);		
 	  	\draw[line width=1,->] (E) to (C);
 	  	\draw[line width=1,->] (G) to (D);

 	  	\draw[line width=1,->] (F) to (D);		
 	  	\draw[line width=1,->] (F) to (E);	
 	  	\draw[line width=1,->] (G) to (E);		
 
 	  	\draw[line width=1,->] (G) to (I);		
 	  	\draw[line width=1,->] (F) to (I);	
 	  	\draw[line width=1,->] (J) to (I);	  		

 	  	\draw[line width=1.5,my-red] (K) to (J);
 	  	\draw[line width=1.5,my-red] (L) to (J);	 
		\draw[line width=1.5,my-red] (K) to (L);	  	  
 	  	
\draw[rounded corners] (-7,-2) rectangle (-5,2) {}; 	 

\node[shape=circle,scale=1] (AX) at (-4.2,-1.3){$\mathcal{C}^1_G$} ;

\draw[rounded corners] (8,-2) rectangle (13,2) {}; 	  

\node[shape=circle,scale=1] (AX) at (10.2,-1.3){$\mathcal{C}^2_G$} ;
 \end{tikzpicture}
\end{center}
\caption{\label{fig:ChainGraph} Decomposition of an essential graph for a \DAG $G$ into its chain graph components $\mathcal{C}^1_G$ and  $\mathcal{C}^2_G$,  drawn in red.} 
\end{figure}
It will be convenient to use a decomposition for essential graphs  when estimating the expected size of a Markov equivalence class. For a given \DAG $G$, we define its \textbf{chain graph} $\mathcal{C}_G=(V(\mathcal{C}),E(\mathcal{C}))$ by removing all directed edges in the essential graph of its Markov equivalence class $M(G)$, and keeping only vertices which are adjacent to at least one undirected edge. Note that $V(\mathcal{C}) \subseteq V(G)$, and we denote the connected components (of size at least $2$) of the chain graph as $\mathcal{C}^1_G,\mathcal{C}^2_G,\dots,\mathcal{C}^{\ell}_G$ for some $\ell\in \N$. A visualization of this construction can be found in Figure~\ref{fig:ChainGraph}. We recall the following observation from \cite{GP:SizeDistribution}.
\begin{lemma}\label{lem:MECcountDecomposition}
For a given \DAG $G$, we have that
\begin{equation}
|M(G)| = \prod_{i=1}^{\ell} \big|M(\mathcal{C}^{i}_G)\big|
\end{equation} where $|M(\mathcal{C}^{i}_G)|$ is the number of ways to orient the edges within $\mathcal{C}^{i}_G$ such that they agree with the edge orientation of some \DAG in $M(G)$.
\end{lemma}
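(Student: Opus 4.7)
The plan is to establish a bijection between $M(G)$ and the Cartesian product $\prod_{i=1}^{\ell} M(\mathcal{C}^i_G)$, from which the product formula follows immediately. First I would recall the characterization of Andersson et al.\ \cite{AMP:MECsForDAG}: the essential graph $E(G)$ representing $M(G)$ is a chain graph whose undirected connected components (i.e.\ the $\mathcal{C}^i_G$, together with isolated vertices) are chordal, and whose directed edges have a fixed orientation shared by every $G' \in M(G)$. In particular, every $G' \in M(G)$ restricts on the edge set of each $\mathcal{C}^i_G$ to an acyclic orientation that creates no new $v$-structure, i.e.\ to an element of $M(\mathcal{C}^i_G)$ as defined in the statement.

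Next I would define the map $\Phi \colon M(G) \to \prod_{i=1}^{\ell} M(\mathcal{C}^i_G)$ sending $G' \mapsto (G'|_{\mathcal{C}^1_G}, \dots, G'|_{\mathcal{C}^\ell_G})$. Injectivity is easy: any two DAGs $G', G'' \in M(G)$ share the same directed edges of $E(G)$ (these are precisely the edges whose orientation is forced across $M(G)$), and they share the same skeleton by definition of Markov equivalence, so agreement on every $\mathcal{C}^i_G$ together with agreement on the directed edges of $E(G)$ forces $G' = G''$.

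The surjectivity step is the main obstacle, and the heart of the lemma. Given any tuple $(D_1, \dots, D_\ell)$ with $D_i \in M(\mathcal{C}^i_G)$, I would construct a candidate DAG $G^\star$ by combining these orientations with the fixed directed edges of $E(G)$, and then verify that $G^\star \in M(G)$. Acyclicity requires checking that no directed cycle arises from mixing components: any such cycle would have to enter and leave at least one component $\mathcal{C}^i_G$ through fixed directed edges of $E(G)$, but the Andersson et al.\ conditions ensure that the induced subgraph on the edges incident to $\mathcal{C}^i_G$ is compatible with any acyclic orientation of the component. For the $v$-structure condition, a new $v$-structure $a \to c \leftarrow b$ in $G^\star$ with $a,b$ non-adjacent must involve at least one edge in some $\mathcal{C}^i_G$; if both are in the same component then $D_i \in M(\mathcal{C}^i_G)$ excludes this, and if they lie in different components (or one is fixed) then it would already be a $v$-structure of $E(G)$, and since $a,b$ are non-adjacent in the skeleton, the chain-graph condition on $E(G)$ forbids this configuration.

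Finally, having established that $\Phi$ is a bijection, the cardinality identity $|M(G)| = \prod_{i=1}^{\ell} |M(\mathcal{C}^i_G)|$ follows at once. I expect the chordality and no-flag condition on each $\mathcal{C}^i_G$ guaranteed by \cite{AMP:MECsForDAG} to do all the real work in the surjectivity check; everything else is bookkeeping.
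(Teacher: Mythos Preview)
The paper does not give its own proof of this lemma; it states the result and cites \cite{GP:SizeDistribution} (Gillispie--Perlman) for it. Your bijection argument via the Andersson--Madigan--Perlman characterization is essentially the standard route taken in that reference, so there is no meaningful methodological difference to report.

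Your injectivity step is correct as written. For surjectivity, the sketch is in the right direction but glosses over two points you should make explicit. First, to rule out a new $v$-structure $a \to c \leftarrow b$ in $G^\star$ where $a \to c$ comes from some $D_i$ and $b \to c$ is a fixed directed edge of $E(G)$, you need the fact that in an essential graph any configuration $b \to c$ with $a - c$ undirected forces $b$ and $a$ to be adjacent (indeed $b \to a$); this is part of the ``strongly protected'' / no-flag characterization in \cite{AMP:MECsForDAG}. Your phrasing ``it would already be a $v$-structure of $E(G)$'' is not quite right, since $E(G)$ has an undirected edge there; the correct conclusion is that $a$ and $b$ are adjacent, so no $v$-structure is created. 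Second, for acyclicity, the cleanest argument is that the chain components $\mathcal{C}^i_G$ are exactly the vertex classes of the partial order induced by the directed part of $E(G)$; hence any directed cycle in $G^\star$ must lie entirely within a single component, contradicting the acyclicity of $D_i$. With these two clarifications your proof is complete.
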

As a simple consequence of Lemma \ref{lem:PDACnoncollider} and Lemma \ref{lem:MECcountDecomposition}, we obtain the following statement.

\begin{lemma}\label{lem:MECviaColliders} For a given \DAG $G$, let $N(G)$ be the set of vertices which are part of some non-collider edge. Then it holds that \begin{equation}
|M(G)| \leq |N(G)|! \, .
\end{equation}
\end{lemma}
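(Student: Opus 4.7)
The plan is to combine the chain-graph decomposition from Lemma~\ref{lem:MECcountDecomposition} with Lemma~\ref{lem:PDACnoncollider}, and then use the super-multiplicativity of factorials.

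First, I would observe that every vertex lying in a connected component $\mathcal{C}^{i}_G$ of size at least $2$ is incident to some undirected edge of the essential graph. By Lemma~\ref{lem:PDACnoncollider}, each such undirected edge is a non-collider edge of $G$, so every vertex of $\mathcal{C}^{i}_G$ belongs to $N(G)$. Writing $n_i := |V(\mathcal{C}^{i}_G)|$, the components being vertex-disjoint therefore yields
\begin{equation}
\sum_{i=1}^{\ell} n_i \;\leq\; |N(G)|.
\end{equation}

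Next, I would bound $|M(\mathcal{C}^{i}_G)|$ by $n_i!$. Each element of $M(\mathcal{C}^{i}_G)$ corresponds to an acyclic orientation of the undirected edges of $\mathcal{C}^{i}_G$ (by definition of chain graph and because every DAG in $M(G)$ is acyclic). Any acyclic orientation on $n_i$ vertices is induced by at least one linear ordering of the vertices (orienting every edge from the smaller to the larger label), and there are exactly $n_i!$ such orderings, giving the bound $|M(\mathcal{C}^{i}_G)|\leq n_i!$.

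Finally, I would apply Lemma~\ref{lem:MECcountDecomposition} and the elementary inequality $a!\,b! \leq (a+b)!$, iterated over the $\ell$ components, to obtain
\begin{equation}
|M(G)| \;=\; \prod_{i=1}^{\ell} |M(\mathcal{C}^{i}_G)| \;\leq\; \prod_{i=1}^{\ell} n_i! \;\leq\; \Big(\sum_{i=1}^{\ell} n_i\Big)! \;\leq\; |N(G)|!,
\end{equation}
which is the desired bound. No step looks genuinely difficult; the only point requiring a sentence of justification is the inequality $|M(\mathcal{C}^{i}_G)| \leq n_i!$, but this is immediate from the fact that any acyclic orientation is realised by some linear ordering of the vertex set.
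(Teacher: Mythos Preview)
Your proof is correct and follows essentially the same route as the paper: decompose via Lemma~\ref{lem:MECcountDecomposition}, bound each $|M(\mathcal{C}^{i}_G)|$ by $n_i!$, and use Lemma~\ref{lem:PDACnoncollider} together with vertex-disjointness of the chain components to conclude $\prod_i n_i!\le |N(G)|!$. The only cosmetic difference is that the paper justifies the bound $|M(\mathcal{C}^{i}_G)|\le n_i!$ by citing the poset decomposition (Lemma~\ref{lem:PosetDecomposition}), whereas you give the equivalent direct argument via linear orderings.
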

\begin{proof}
Consider the representation of a \MEC of a given \DAG $G$ as a partially directed acyclic graph, where the directed edges have the same orientation as in $G$. Let $(\mathcal{C}^{i}_G)_{i\in [\ell]}$ for some $\ell\in \N$ denote the connected (undirected) components of its chain graph, and $|\mathcal{C}_G^{i}|$ the number of sites in $\mathcal{C}_G^{i}$. Note that by the poset decomposition in Lemma \ref{lem:PosetDecomposition}, for each $\mathcal{C}_G^{i}$, there are at most $|\mathcal{C}_G^{i}|!$ ways to orient the edges. Together with Lemma~\ref{lem:MECcountDecomposition}, this yields
\begin{equation}
|M(G)| \leq \prod_{i=1}^{\ell} |\mathcal{C}_G^{i}|! \ .
\end{equation} Since every site in a component $\mathcal{C}_G^{i}$ must be an element of $N(G)$ by Lemma \ref{lem:PDACnoncollider}, and the connected components $(\mathcal{C}^{i}_G)_{i\in [\ell]}$ are vertex disjoint by construction, we conclude.
\end{proof}

Lemma \ref{lem:MECviaColliders} gives good bounds when the number of non-collider edges is large compared to the number of sites. However, we will require a refined estimate on the number of Markov equivalence classes when the vertices of the chain graph can be split into two parts: a core, where only a small number of sites are connected to other sites outside of the core using non-collider edges, and a remainder where no two sites are connected by non-collider edges. This is formalized in the following proposition which we require as a key took for the proof of Theorem~\ref{thm:Moments}. In order to state it, we fix in the following unique labels $\ell(v) \in [n]$ for all sites $v \in V(G)$, which are consistent with the tower structure, i.e.\ such that $\ell(v) \geq \ell(w)$ holds if and only if $v\in H_i$ and $w \in H_j$ with $i \leq j$ in the tower $H(G)$. 

\begin{proposition}\label{pro:RefinedMECcounting} For a given directed acyclic graph $G$, let $\mathcal{C}_{G}=(V(\mathcal{C}),E(\mathcal{C}))$ denote its chain graph. Recall that  $V(\mathcal{C}) \subseteq V(G)$, and suppose that $V(\mathcal{C})$ can be partitioned as $(V_1,V_2)$ such that the following four properties hold for some natural numbers $a,b>0$. 
\begin{itemize}
\item[1.] Each layer of the tower $H$ of $G$ with a site in $V(\mathcal{C})$ consists of at most $a$ many sites.
\item[2.] There are at most $b$ many sites in $V_1$ which are adjacent to a site in $V_2$ using $E(\mathcal{C})$.
\item[3.] We have $\{v,w \} \notin E(\mathcal{C})$ for all $v,w \in V_2$.
\item[4.] For each pair of sites $v,w \in V_2$ with $v\in H_{i}$ and $w \in H_j$ for some $i \geq j+3$ such that $\sum_{k=j+1}^{i-1} h_j \geq 3a$, there exists a directed path in $G$ from $v$ to $w$.
\end{itemize}
Then we have that the number $|M(G)|$ of \MECs with respect to $G$ satisfies
\begin{equation}
|M(G)| \leq |V_1| ! (|V_2|(10a)!)^b \, .
\end{equation}
\end{proposition}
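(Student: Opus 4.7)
The plan is to leverage the chain-component factorization from Lemma~\ref{lem:MECcountDecomposition} and then bound the contributions of $V_1$-components and $V_2$-involving components separately, using the four structural assumptions in different roles.

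First I would write $|M(G)| = \prod_i |M(\mathcal{C}^i_G)|$ and partition the chain components into \emph{Type A}, meaning $\mathcal{C}^i \subseteq V_1$, and \emph{Type B}, meaning $\mathcal{C}^i \cap V_2 \neq \emptyset$. For Type A components the crude bound $|M(\mathcal{C}^i)| \leq |\mathcal{C}^i|!$ from Lemma~\ref{lem:MECviaColliders} together with the elementary inequality $\prod_i s_i! \leq \bigl(\sum_i s_i\bigr)!$ for positive integers $s_i$ immediately gives
\[
\prod_{i \in A} |M(\mathcal{C}^i_G)| \;\leq\; \Bigl(\sum_{i \in A} |\mathcal{C}^i_G|\Bigr)! \;\leq\; |V_1|!,
\]
which is the first factor in the target bound.

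The heart of the argument is the treatment of Type B components, where I would use properties 1, 3, 4 jointly to show that the $V_2$-part of every Type B component is confined to a label window of width at most $10a$, hence contains at most $10a$ vertices (by property 1). The strategy is to argue by contradiction: if some Type B component contained $v,w \in V_2$ with $\ell(w) \geq \ell(v) + 5a$, then an undirected path from $v$ to $w$ lies inside $\mathcal{C}^i$, while property 4 supplies a directed $G$-path from $v$ to $w$. Inside the chain component we are free to choose a perfect elimination orientation making $w$ an ancestor of $v$, which combined with the rigid directed path would create a cycle in some DAG of $M(G)$. Working out this step carefully—tracking which edges of the directed path live inside $\mathcal{C}^i$ and exploiting property~3 to bound the diameter of any $V_2$-cluster—yields the size bound $|V_2 \cap \mathcal{C}^i| \leq 10a$.

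Once the $V_2$-core of each Type B component has been localized, property 2 enters: at most $b$ vertices of $V_1$ are adjacent to $V_2$ in $\mathcal{C}$, so the total count of $V_1$-bridges across Type B components is at most $b$. Each Type B component can then be encoded by (i) the choice of which $V_2$-vertex each of its bridge $V_1$-sites attaches to (at most $|V_2|$ choices per bridge) and (ii) a perfect elimination orientation of its local $V_2$-core together with its $V_1$-attachments (at most $(10a)!$ each, since the core has $\leq 10a$ vertices). Distributing the $b$ bridge slots across Type B components in the worst case yields the product bound $(|V_2|\,(10a)!)^b$, and multiplying by the Type A contribution $|V_1|!$ gives the statement.

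The main obstacle will be the $V_2$-diameter step: converting the abstract directed-path constraint of property~4 into a rigid bound on the label span of a chain component's $V_2$-part requires careful bookkeeping about which edges of a $v$-to-$w$ directed path live in the chain component versus in the rest of $G$, and invoking the AMP characterization of essential graphs to rule out the bad orientations. The remaining steps—the factorial telescoping for Type~A and the bridge-bookkeeping for Type~B—are essentially combinatorial once the $10a$-window is established.
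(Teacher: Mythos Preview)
Your approach via chain-component factorization is genuinely different from the paper's, and while your $V_2$-diameter step is essentially right, the Type~B bookkeeping does not close.

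The paper does not factor through chain components at all. It encodes each DAG $G'$ in the MEC directly by (a) a total order on \emph{all} of $V_1$ (this is the full $|V_1|!$, not just the Type~A part), and (b) for each of the $\leq b$ bridge vertices $v\in V_1$, a ``transition point'' $I(v)\in V_2$ together with an ordering of the $\leq 10a$ vertices in a label window around $I(v)$. The transition-point structure comes from applying property~4 to pairs $x,y\in V_2$ joined \emph{through} a bridge $v$: if $x\to v\to y$ in $G'$ with $\ell(x)\geq\ell(y)+5a$, the directed path $y\to x$ from property~4 closes a cycle through $v$, so the in- and out-neighbours of $v$ in $V_2$ overlap only in a window of width $10a$, and recording $I(v)$ plus the order inside this window determines every $v$--$V_2$ edge.

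Your accounting for Type~B components does not recover this. The $|V_1|!$ you produce pays only for chain components lying entirely in $V_1$; a Type~B component can contain many $V_1$-vertices that are not bridges (they reach $V_2$ only through other $V_1$-vertices within the same component), and their orientations are covered neither by your Type~A product nor by your per-bridge $(10a)!$. Separately, you allocate one $(10a)!$ factor per bridge, but what your diameter step actually bounds by $10a$ is the $V_2$-core of a component, which is one object per Type~B component; a component lying entirely in $V_2$ has zero bridges yet still costs a $(10a)!$, so the exponent $b$ is too small. The paper sidesteps both issues by ordering all of $V_1$ in one shot and then spending the $(|V_2|(10a)!)$ budget once per bridge rather than once per component.
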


\begin{proof} Let $\tilde{V}$ be the set of sites in $V_1$ that are connected to some site in $V_2$ via an undirected edge, and note that $|\tilde{V}| \leq b$ by our second assumption. By Lemma \ref{lem:MECviaColliders}, there are at most $|V_1|!$ many ways to orientate the edges within $V_1$ in order to obtain an element of the \MECp For each such ordering, it remains to show that there are at most $(|V_2|(10a)!)^b$ valid ways to assign the directions of the remaining edges in $E(\mathcal{C})$ between $V_1$ and $V_2$. \\

To this end, our key observation is that for each $v\in \tilde{V}$, there exists a transition point $w\in V_2$ in the direction of the outgoing edges from $v$ to $V_2$. More precisely, fix an orientation of the edges in $V_1$ leading to a \DAGc and a vertex $v\in \tilde{V}$. For a fixed orientation of the remaining non-collider edges, let $O(v)$ be the smallest label of a site $w_1$ such that $(v,w_1)$ is present for some $\{v,w_1\} \in E(\mathcal{C})$, and let $I(v)$ be the largest label of a site $w_2$ such that the edge $(w_2,v)$ is present for some $\{w_2,v\} \in E(\mathcal{C})$, with conventions $I(v)=0$ and $O(v)=n+1$ if there are no such edges. We claim that $I(v)-O(v) \leq 10a$. To see this, note that by the first and forth assumption, and the definition of the labels $\ell$, for each pair of edges $(x,v)$ and $(v,y)$ with $x$ for $\ell(x)\leq \ell(v)-5a$ and with $y$ for $\ell(y)\geq \ell(v)+ 5a$, we would create a directed cycle if $I(v)-O(v)>10a$. \\

Now for every $v\in \tilde{V}$, note that there are at most $|V_2|$ many ways to choose $I(v)$. For each such choice, there are at most $10a$ many sites in $V_2$ to which the ordering of edges between $\tilde{V}$ and $V_2$ is not determined from knowing $I(v)$. Fixing now one of remaining at most $(10a)!$ many ways to orient the edges between $v$ and these at most $10a$ many sites, 
our third assumption guarantees that this uniquely characterizes the respective \DAG in $M(G)$. This yields the desired claim, and thus finishes the proof.
\end{proof}

\section{Counting the number of \MECs of a uniformly sampled \DAG}\label{sec:NumberMECs}

We now investigate the set of vertices $N(G)$ of a \DAG $G$ sampled according to $\P_n$, which are adjacent to some non-collider edge in $C(G)$. Recall from Lemma~\ref{lem:MECviaColliders} that the size of a \MEC for $n$ non-collider sites is at most $n!$. Note that this bound is tight when the underlying graph is a clique, but can in general be quite rough, for example for tree-like graphs. Therefore, we use a decomposition of the sites $N(G)$ in order to obtain an improved bound on the size of the \MEC of $G$,  applying Proposition \ref{pro:RefinedMECcounting}.  
We give now the setup for Theorem \ref{thm:Moments}. Our main idea is to partition the sites $N(G)$ according to their position within the tower and their connection properties to obtain the following proposition. 

\begin{proposition}\label{pro:NonCollider} Recall that $M(G)$ denotes the \MEC of $G$. Then there exists some universal constants $c,N>0$ such that for all $n \geq N$, and all $m\in [n^{19/20}]$.
\begin{equation}
\P_{n}( |M(G)| \geq m!) \leq \exp\big(-cm^{\frac{11}{10}}\big) \, .
\end{equation} 
\end{proposition}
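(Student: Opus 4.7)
The plan is to apply the refined counting estimate of Proposition \ref{pro:RefinedMECcounting} to the chain graph $\mathcal{C}_G$ on a suitable high-probability event, producing a partition $(V_1, V_2)$ whose parameters force $|M(G)| < m!$. The exponent $\frac{11}{10}$ in the statement suggests choosing the layer-width cutoff $a := \lfloor \alpha\, m^{11/20}\rfloor$ for a small constant $\alpha > 0$, so that $a^2/4 \geq c_0\, m^{11/10}$. By Proposition \ref{pro:LevelEstimate}, combined with truncation to the top $O(\log m)$ active layers via Lemma \ref{lem:ExpectedNonCollider}, condition (1) of Proposition \ref{pro:RefinedMECcounting} holds with this $a$ outside an event of probability $\exp(-c_1 m^{11/10})$.

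Next, I would take $V_1$ to consist of all chain-graph sites lying within $5a$ tower layers of a non-regeneration layer (a layer of size at least $2$), and $V_2$ to be the remaining chain-graph sites; thus $V_2$ sits in a \emph{regeneration corridor} of size-$1$ layers. Using Corollary \ref{cor:DistanceOfLevels} on the geometric distance between regeneration points together with the layer-size tails of Proposition \ref{pro:LevelEstimate}, I would establish $|V_1| \leq m/4$ and the cross-boundary count $b \leq c\, m^{9/20}$ on an event of probability at least $1 - \exp(-c_2 m^{11/10})$. Conditions (3) and (4) of Proposition \ref{pro:RefinedMECcounting} then follow from Remark \ref{rem:IndependentSamplingTower}: any two $V_2$-sites whose layers are at distance at least $5a$ are joined by a directed path except on an event of probability $2^{-a}$, and a chain-graph edge between two such distant $V_2$-sites would clash with the resulting forced directed path.

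Combining these ingredients, Proposition \ref{pro:RefinedMECcounting} gives
\begin{equation*}
|M(G)| \leq |V_1|!\,\bigl(|V_2|(10a)!\bigr)^b \leq (m/4)!\,\bigl(n\,(10a)!\bigr)^{c\, m^{9/20}},
\end{equation*}
which is strictly less than $m!$ after taking logarithms via Stirling, provided $m \leq n^{19/20}$ and the constants $\alpha, c$ are chosen small enough: the dominant term $c\, m^{9/20} \log((10a)!) \asymp c\, m \log m$ stays below the gap $\log m! - \log (m/4)! \asymp (3m/4)\log m$, while the factor $\log n$ is absorbed using $m \leq n^{19/20}$.

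The main obstacle is the joint concentration of $|V_1|$ and $b$. The latter measures how many sites of $V_1$ share a non-collider edge with the thin corridor $V_2$, and bounding it with the required $\exp(-c m^{11/10})$ tail calls for a careful analysis of non-collider edges between the non-regeneration pockets of the tower and the surrounding regeneration regions. I would execute this by combining the bypass-edge independence of Remark \ref{rem:IndependentSamplingTower} with the feature-weight estimates of Section \ref{sec:DecompositionOfTowers}, exploiting that a feature of size $k$ carries weight super-exponentially small in $k$ and therefore can only host a bounded number of sites relevant to $b$ on the good event.
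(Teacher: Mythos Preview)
Your partition of $V(\mathcal C)$ into sites near non-regeneration layers versus sites in a ``regeneration corridor'' does not work as stated, and the gap is structural rather than technical. By Corollary~\ref{cor:DistanceOfLevels} and the feature description of Section~\ref{sec:DecompositionOfTowers}, consecutive regeneration points are separated by a feature whose length is $O(1)$ with exponential tails, and the probability that a given feature is empty is a fixed constant $\mu_{\theta_\ast}(\emptyset)\in(0,1)$. Hence non-regeneration layers occur at density bounded away from zero: the probability of finding $5a=5\alpha m^{11/20}$ \emph{consecutive} size-one layers anywhere in the relevant window is at most $k^{7/6}\mu_{\theta_\ast}(\emptyset)^{5a}$, which is doubly exponentially small. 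On the event you are working on, \emph{every} layer is therefore within $5a$ of some non-regeneration layer, so $V_2=\emptyset$ and $V_1=V(\mathcal C)$. Your bound then collapses to $|M(G)|\le |V_1|!=|N(G)|!$, i.e.\ to Lemma~\ref{lem:MECviaColliders}, and you are left needing $\P_n(|N(G)|\ge m)\le\exp(-cm^{11/10})$---which is precisely the hard estimate you have not supplied, and which neither Lemma~\ref{lem:ExpectedNonCollider} (only stochastic boundedness, no quantitative tail) nor the tower-structure results of Section~\ref{sec:DecompositionWeights} provide.

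The paper's argument avoids this by taking a partition that is \emph{not} determined by the tower alone. It fixes $a=k^{4/5}$, $b=k^{1/10}$, and declares a site with label in $[k/3,k^{7/6}]$ to be $k$-\emph{good} when it satisfies four explicit degree conditions to prescribed label windows (Lemmas~\ref{lem:BadSites}--\ref{lem:ConnectionGoodBad}); these conditions are designed so that properties~(2)--(4) of Proposition~\ref{pro:RefinedMECcounting} follow directly for the good sites, while the bad sites are few because each degree condition fails with probability $\exp(-ck)$ independently across sites. The crucial point is that the $\exp(-cm^{11/10})$ tail comes from a union bound over $\binom{k^{7/6}}{k^{1/10}}$-many configurations of potential cross edges and bad vertices, each configuration having probability $\le 2^{-ck}$ from the independent bypass edges of Remark~\ref{rem:IndependentSamplingTower}. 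Your proposal never isolates a combinatorial event of this type; the regeneration structure by itself only delivers $\exp(-cm)$-type tails on tower quantities and says nothing about which vertices carry non-collider edges. Even your ``clash'' argument for condition~(3) is incomplete: the directed tower-spine path from $v$ to $w$ need not consist of collider edges, so it does not force the orientation of $\{v,w\}$ in the essential graph.
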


\begin{proof}[Proof of Theorem \ref{thm:Moments} using Proposition \ref{pro:NonCollider}] Note that $|M(G)| \leq n!$ holds $\P_n$-almost surely. Since for $t=m!$ with $m \in \N$ large enough, we have
\begin{equation}
\frac{\log(t)}{2 \log\log(t)} \leq m \leq \log(t) \, ,
\end{equation} the claimed bound on the tail of $|M(G)|$ follows from Proposition \ref{pro:NonCollider}.
\end{proof}

In the remainder, our goal is to show Proposition \ref{pro:NonCollider}. We achieve this in four steps. First, we consider the tower decomposition and bound the probability that we see a non-collider edge, which reaches a site with a large label, recalling the labeling $\ell$ defined before Proposition \ref{pro:RefinedMECcounting}. We then identify bad vertices, which have undesirable connection properties, and show that with sufficiently large  probability, their number is not too large. In a next step, we bound the total number of bad sites which are connected by non-collider edges to some good site. In a last step, we combine the above estimates to conclude.

\subsection{On the total number of non-collider edges deep in the tower}

We estimate the number of non-collider edges with both endpoints deep in the tower. Recall the tower decomposition $H(G)$ with a tower vector $h(G)$. Further, recall the labeling $\ell$ before Proposition \ref{pro:RefinedMECcounting}, which is consistent with the tower decomposition, i.e.\ $\ell(v) \geq \ell(w)$ holds if and only if $v\in H_i$ and $w \in H_j$ for some $i \leq j$. We give a bound on the number of such edges where the lower endpoint has a label between $k/3$ and $k^{7/6}$, and show that with sufficiently large probability, there are no such non-collider edges to a site with label at least $k^{7/6}$. Before doing so, we start with a bound on the number of sites in each layer in the tower decomposition. We define the event
\begin{align}\label{def:B1}
\mathcal{B}_1 := \left\{  |h_{T(h)+1-i}| \leq k^{2/3} \text{ for all } i\in [k^2] \right\} \cap  \left\{ |h_{T(h)+1-i}| \leq i^{2/3} \text{ for all } i > k^{2} \right\}
\end{align} Here, we use the convention that $h(i)=0$ if $i>T(h)$, where $T(h)$ is the length of $h$.

\begin{lemma}\label{lem:NumberOfSitesPerGeneration}
There exist constants $c,N>0$ such that for all $n \geq N$ and $k\in [n]$, we have that $\mathcal{B}_1=\mathcal{B}_1(k)$ holds with $\P_n$-probability at least $1-\exp(-ck^{5/4})$ for all $k>0$ sufficiently large.
\end{lemma}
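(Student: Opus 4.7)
The plan is to bound $\P_n(\mathcal{B}_1(k)^c)$ by a union bound that reduces the problem to the Gaussian-type single-layer tail estimates in Proposition \ref{pro:LevelEstimate}. The random tower length $T(h)$ in the indexing $h_{T(h)+1-i}$ will be handled by a coarse inclusion into events quantified over all $n$ possible layer positions.

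I would first split $\mathcal{B}_1(k)^c$ into the two natural pieces
\[
E_1 = \bigl\{\exists\, i \in [k^2] : |h_{T(h)+1-i}| > k^{2/3}\bigr\},
\qquad
E_2 = \bigl\{\exists\, i > k^2 : |h_{T(h)+1-i}| > i^{2/3}\bigr\}.
\]
For $E_1$, I would use the trivial inclusion $E_1 \subseteq \{\exists\, j \in [n] : h_j > k^{2/3}\}$ and apply Proposition \ref{pro:LevelEstimate} to the indices $1, 2, \dots, n$ with common threshold $k^{2/3}$ (valid once $k^{2/3} \geq 5$), obtaining $\P_n(E_1) \leq 4n \cdot 2^{-k^{4/3}/4}$. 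For $E_2$, I would first take a union bound over $i > k^2$, and then for each such $i$ apply the same coarsening together with Proposition \ref{pro:LevelEstimate}, yielding $\P_n(|h_{T(h)+1-i}| > i^{2/3}) \leq 4n \cdot 2^{-i^{4/3}/4}$. The resulting series $\sum_{i > k^2} 4n \cdot 2^{-i^{4/3}/4}$ is dominated geometrically by its first term $O(n) \cdot 2^{-k^{8/3}/4}$, which is negligible compared to the bound for $E_1$.

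Combining the two pieces yields $\P_n(\mathcal{B}_1(k)^c) \leq C n \cdot 2^{-k^{4/3}/4}$. The remaining task is to check that $\log(Cn) + c k^{5/4} \leq \tfrac{1}{4}(\ln 2)\, k^{4/3}$ for a suitable universal constant $c > 0$; since $k^{4/3}$ dominates both $k^{5/4}$ and $\log n$ in the regime where the lemma will later be applied (where $k$ is at least a polynomial in $m \leq n^{19/20}$, cf.\ Proposition \ref{pro:NonCollider}), this step is routine and delivers the claimed bound $\P_n(\mathcal{B}_1(k)^c) \leq \exp(-c k^{5/4})$.

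The only technical subtlety is the random tower length $T(h)$, which I sidestep by the coarse inclusion into events over all $j\in[n]$. One might hope to sharpen the constants using the feature decomposition of Section \ref{sec:DecompositionOfTowers}---for instance, by exploiting that only $O(k^2)$ features near the bottom of the tower affect $\mathcal{B}_1(k)$ and that these are asymptotically independent under the change of measure in Lemma \ref{lem:GadgetEquivalence}---but this is unnecessary here: the Gaussian tail $2^{-x^2/4}$ from Proposition \ref{pro:LevelEstimate} already dominates the factor of $n$ by a wide margin once $k$ is sufficiently large, and the gap between the achievable exponent $k^{4/3}$ and the claimed $k^{5/4}$ leaves ample slack. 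Hence the finer machinery of Section \ref{sec:DecompositionOfTowers} is not needed for this statement.
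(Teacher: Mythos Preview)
Your union bound over all $n$ layer positions is too coarse and leaves a genuine gap. The lemma is stated for all $k\in[n]$ with $k$ larger than a \emph{universal} threshold, and in the application (Proposition~\ref{pro:NonCollider}) one takes $k=m\in[n^{19/20}]$, which certainly includes constant values of $k$. For such $k$ your bound $Cn\cdot 2^{-k^{4/3}/4}$ diverges with $n$ and cannot be made $\le\exp(-ck^{5/4})$. Your remark that ``$k$ is at least a polynomial in $m$'' does not save this: $m$ itself may be a fixed constant.

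The issue is not the random $T(h)$ per se, but that you pay a factor of $n$ to eliminate it. The paper's one-line proof (``a simple computation using Proposition~\ref{pro:LevelEstimate}, summing over all possible values of the entries in $h$'') is meant to apply the single-layer tail directly to the layers indexed from the source side. The replacement argument behind Proposition~\ref{pro:LevelEstimate} (replace a layer of size $x$ by $x$ singletons and compare weights) works for any layer, and the induction \eqref{eq:RecursiveheightVector}--\eqref{eq:SingleHeightPoint} runs just as well with $h_{T(h)+1-i}$ in place of $h_i$: the neighbouring layer becomes $h_{T(h)+2-i}$, and for $i=1$ this is $0$ by convention, giving the base case. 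Hence $\P_n(h_{T(h)+1-i}\ge x)\le 4\cdot 2^{-x^2/4}$ uniformly in $i$. Now the honest union bound over $i\in[k^2]$ for $E_1$ gives $4k^2\cdot 2^{-k^{4/3}/4}$, and summing $4\cdot 2^{-i^{4/3}/4}$ over $i>k^2$ handles $E_2$. The resulting $O(k^2)\,2^{-k^{4/3}/4}$ is comfortably $\le\exp(-ck^{5/4})$ for all large $k$, with no dependence on $n$.
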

\begin{proof} This follows from a simple computation using Lemma \ref{pro:LevelEstimate}, and summing over all possible values of the entries in  $h$.
\end{proof}
Let us stress that in the following, we first sample the tower vector $h$, and then the respective \DAG accordingly, recalling Remark \ref{rem:IndependentSamplingTower}. We conclude this subsection showing that with sufficiently high probability, we see  only collider edges deep in the tower.

\begin{lemma}\label{lem:ColliderHighInTower}
There exist constants $c,N>0$ such that for all $n \geq N$ and $k\in [n]$, we have 
\begin{equation*}
 \P_{n}\left( \exists e=(v,w) \text{ non-collider  with  } i,j \geq k^{\frac{7}{6}} \text{ with } \ell(v)=i \text{ and } \ell(w)=j\right)  \leq \exp(-ck^{11/10}) \, .
\end{equation*} 
\end{lemma}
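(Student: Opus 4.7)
My plan is to work conditionally on the tower-regularity event $\mathcal{B}_1$ from Lemma~\ref{lem:NumberOfSitesPerGeneration}; its complement has probability at most $\exp(-ck^{5/4})$, which is absorbed into the target bound since $5/4>11/10$. On $\mathcal{B}_1$, the label bound $\ell(v)\geq k^{7/6}$ translates into $Q(v)\geq k^{7/6}/2$, where $Q(v)$ denotes the number of sites in tower-layers of index $\geq i_v+2$. This follows from the identity $\ell(v)\leq |H_{i_v}|+|H_{i_v+1}|+Q(v)$ together with a short case split between the shallow regime $d_v\leq k^2$ (where $|H_{i_v}|,|H_{i_v+1}|\leq k^{2/3}$ on $\mathcal{B}_1$) and the deep regime (where $|H_j|\leq d_v^{2/3}\leq \ell(v)^{2/3}\ll \ell(v)/2$ by the second bound in $\mathcal{B}_1$).

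The per-pair estimate relies on Remark~\ref{rem:IndependentSamplingTower}: conditional on the tower, edges between layers at tower-distance at least $2$ are iid Bernoulli$(1/2)$. For a fixed ordered pair $v\to w$ with both labels $\geq k^{7/6}$, the edge $(v,w)$ is non-collider iff no site $u$ at layer-index $\geq i_v+2$ simultaneously satisfies $u\to w$ and $u\not\sim v$. Since these two conditions concern disjoint edges, each iid Bernoulli$(1/2)$, each such $u$ is ``bad'' with probability $1/4$ independently across $u$, giving
\[
\P\bigl((v,w)\ \text{non-collider edge}\mid\sigma\bigr)\;\leq\;(3/4)^{Q(v)}\;\leq\;\exp(-c_1 k^{7/6}).
\]

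Summing this estimate over all pairs with both labels $\geq k^{7/6}$ gives $\P(\text{event}\mid\sigma,\mathcal{B}_1)\leq n^2 \exp(-c_1 k^{7/6})$; writing $k^{7/6}=k^{11/10}\cdot k^{1/15}$, the factor $k^{1/15}$ absorbs $2\log n$ whenever $k\geq C(\log n)^{6/7}$, which yields the desired bound $\exp(-c k^{11/10})$ on this range. The main obstacle lies in the intermediate regime $K\leq k< C(\log n)^{6/7}$ for a constant $K$, where the bound $\exp(-ck^{11/10})$ is only weakly sub-polynomial in $n^{-1}$ and the crude union bound is not sharp enough. To handle this range I would refine the summation by splitting $v$ according to its tower depth: for $d_v>k^{11/10}$ the bound $(3/4)^{Q(v)}\leq \exp(-ck^{11/10})$ is already of the target order per pair, while for shallow $v$ with $d_v\leq k^{11/10}$ I would use the sharper estimate $(3/4)^{Q'(w)-3L_v}$, where $Q'(w)$ counts sites at layer-index $\geq i_w+2$ and $L_v$ is the maximum size of the three layers adjacent to $v$; on $\mathcal{B}_1$ one has $L_v\leq k^{2/3}$ and the number of such shallow $v$-sites is at most $k^{53/30}$, so the multiplicative inflation $(4/3)^{3L_v}$ is dominated by the decay $(3/4)^{Q'(w)}$ with $Q'(w)\geq \ell(w)/2\geq k^{7/6}/2$. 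Finally, for $k\leq K$ the target $\exp(-ck^{11/10})$ is bounded away from zero and Lemma~\ref{lem:SeeNoCollider} supplies $\P(\exists\ \text{non-collider edge})\leq 1-c_0$ uniformly in $n$, so the inequality holds after choosing $c$ sufficiently small.
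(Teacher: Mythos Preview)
Your per-pair estimate is essentially the same as the paper's: conditionally on the tower, for each candidate site $u$ at layer-distance $\geq 2$ from both $v$ and $w$, the event ``$u\to w$ and $u\not\sim v$'' has probability $1/4$ independently, so $\P((v,w)\in C(G)\mid\sigma)\leq(3/4)^{Q(v)}$. This part is fine.

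The genuine gap is in the summation. The paper never unions over $n^2$ pairs; it partitions according to the label of the \emph{upper} endpoint,
\[
\mathcal{A}_i:=\bigl\{\exists(v,w)\in C(G):\ \ell(w)\in[k^{7i/6},\,k^{7(i+1)/6}]\bigr\},
\]
and uses the crucial observation that $\ell(v)<\ell(w)\leq k^{7(i+1)/6}$ forces \emph{both} endpoints to have labels $\leq k^{7(i+1)/6}$, so there are at most $k^{7(i+1)/3}$ candidate edges in the $i$-th block. Combined with the per-edge bound $\exp(-c_2 k^{7i/6})$ this gives $\P(\mathcal{A}_i)\leq\exp(-c_0 k^{7i/6})$, and summing the geometric-type series over $i$ yields $\exp(-ck^{7/6})\leq\exp(-ck^{11/10})$ with no dependence on $n$ whatsoever. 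There is no ``intermediate regime'' to worry about.

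Your workaround for $K\leq k<C(\log n)^{6/7}$ does not close. In the branch ``$d_v>k^{11/10}$'' you say the per-pair bound is ``of the target order'', but you still have to sum over pairs, and you give no count replacing $n^2$ there. In the shallow-$v$ branch your sharper estimate $(3/4)^{Q'(w)-3L_v}$ is correct, but the control $L_v\leq k^{2/3}$ from $\mathcal{B}_1$ only holds for $v$ within the last $k^2$ layers of the tower; once $v$ sits deeper (which is exactly the relevant case, since $\ell(v)\geq k^{7/6}$ typically pushes $v$ beyond the first $k^2$ layers from the sources), $\mathcal{B}_1$ only gives $L_v\leq i^{2/3}$ with $i$ potentially of order $n$, and the inflation factor $(4/3)^{3L_v}$ is uncontrolled. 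The dyadic decomposition on $\ell(w)$ sidesteps all of this in two lines; adopt it.
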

\begin{proof}
Condition on the event $\mathcal{B}_1$ and consider the events $\mathcal{A}_i$ that there is a non-collider edge between sites with labels between $k^{7i/6}$ and $k^{7(i+1)/6}$, i.e.\ we set
\begin{equation}\label{eq:PartitionTheSites}
\mathcal{A}_i := \left\{ \exists (v,w)\in C(G) \colon \ell(w) \in \Big[ k^{7i/6},k^{7(i+1)/6} \Big]\right\}
\end{equation}
for all $i\in \N$. We claim that $\mathcal{A}_i$ has $\P_n$-probability at most $\exp(-c_0k^{7i/6})$ for some universal constant $c_0>0$. Since we condition on $\mathcal{B}_1$, and there are at most $k^{7(i+1)/3}$ edges $e=(v,w)$ which we have to consider for $\mathcal{A}_i$. Note that with probability at least $1- \exp(-c_1k^{7i/6})$ for some constant $c_1>0$, there are at least $k^{7i/6}/9$ sites in $U_v=\{u \colon \ell(u) \leq k^{7i/6}/4 \}$ which are connected to $w$. All edges from a site  in $U_v$ to the site $v$ must be present in order to have $e$ as a non-collider. Recalling Remark \ref{rem:IndependentSamplingTower}, this shows that for all $n$ and $k$ large enough
\begin{equation}
\P_{n}( (v,w) \in C(G)) \leq \exp(-c_2 k^{7i/6})
\end{equation} for some constant $c_2>0$. We obtain \eqref{eq:PartitionTheSites} by a union bound over all admissible edges. Using another union bound on the events $\mathcal{A}_i$ over $i\in \N$, this finishes the proof.
\end{proof}

\subsection{Construction of good and bad vertices}

In order to give the definition of bad vertices, we use the following additional notation. For $B \subseteq [n]$, we write $\deg(v, B)$ for the number of edges between $v$ and a site $w$ with $\ell(w) \in B$. \\ 

For $k \in \N$, we say that $v\in V$ with $\ell(v) \in [k/3,k^{7/6}]$ is $\boldsymbol{k}$-\textbf{good} if the following holds:
\begin{itemize}
 \item[(1)] For all $i \in [4 k^{4/5},k/3 ] \cap \Z$, and $j \in [4 k^{4/5},k/3 ] \cap [n-\ell(v)]$, we have that
 \begin{equation*}
 \deg\left(v, [\ell(v)+1,\dots,\ell(v)+j]\right) \geq \frac{j}{3} \quad \text{ and } \quad \deg\left(v, [\ell(v)-i\dots,\ell(v)-1]\right) \geq \frac{i}{3}
 \end{equation*}
 \item[(2)] 	The relative ordering of the site $v$ is fully determined for all sufficiently distant sites, i.e.\ for all $w$ with $\ell(w)\geq \ell(v)+5k^{\frac{4}{5}}$ such that $(v,w) \in C(G)$, we can find some  $u\in V$ with \begin{equation*}
 (v,u),(u,w) \in E \setminus C(G) \, .
 \end{equation*} Similarly when $(w,v) \in C(G)$ and $\ell(w)\leq \ell(v)-5k^{\frac{4}{5}}$, there is some  $u\in V$ with \begin{equation*}
 (w,u),(u,v) \in E \setminus C(G) \, ,
 \end{equation*} i.e.\ a path of length two of collider edges from $w$ to $v$.
 \item[(3)] The number of connections to the bottom $k/20$ edges is typical, i.e.\ 
 \begin{equation*}
 \deg\left(v, \left[0,\dots,\frac{k}{20}\right]\right)  \in \left[\frac{k}{80}, \dots, \frac{3k}{80} \right] \, .
 \end{equation*}
 \item[(4)] The number of connections to the sites with label between $k/5$ and $k/4$  is typical, i.e.\ 
 \begin{equation*}
 \deg\left(v, \left[\frac{k}{5},\dots,\frac{k}{4}\right]\right) \in \left[\frac{k}{80}, \dots, \frac{3k}{80} \right] \, .
 \end{equation*} 
\end{itemize}
All sites $v$ with $\ell(v) \in [k/3,k^{7/6}]$, which are not $k$-good are called $\boldsymbol{k}$-\textbf{bad}. In the sequel, we bound in the total number of $k$-bad sites, and the number of connections using non-collider edges to some $k$-good site; see also Figure \ref{fig:GoodAndBad} for a visualization.
\begin{figure}
\begin{tikzpicture}[scale=0.6]

\node[shape=circle,fill,draw,scale=0.6] (A1) at (8,1){} ;
\node[shape=circle,fill,draw,scale=0.6] (A2) at (12,0){} ;
\node[shape=circle,fill,draw,scale=0.6] (A3) at (14.5,0.5){} ;
\node[shape=circle,fill,draw,scale=0.6] (A4) at (6.8,0.7){} ;
\node[shape=circle,fill,draw,scale=0.6] (A5) at (10.1,-1.4){} ;
\node[shape=circle,fill,draw,scale=0.6] (A6) at (11,1.5){} ;
\node[shape=circle,fill,draw,scale=0.6] (A7) at (13.2,0.2){} ;
\node[shape=circle,fill,draw,scale=0.6] (A8) at (7.4,-1.2){} ;

\node[shape=circle,fill,draw,scale=0.6,my-red!50] (A9) at (1,-1.3){} ;
\node[shape=circle,fill,draw,scale=0.6] (A10) at (-3,0.8){} ;
\node[shape=circle,fill,draw,scale=0.6,my-red!50] (A11) at (0.2,-0.3){} ;
\node[shape=circle,fill,draw,scale=0.6] (A12) at (-1.8,-1.1){} ;
\node[shape=circle,fill,draw,scale=0.6] (A13) at (-5,-0.7){} ;
\node[shape=circle,fill,draw,scale=0.6] (A14) at (-2,-0.5){} ;
\node[shape=circle,fill,draw,scale=0.6,my-red!50] (A15) at (1.2,1.6){} ;
\node[shape=circle,fill,draw,scale=0.6] (A16) at (-6.3,1.1){} ;

	\draw[line width=1,bend left=20] (A15) to (A1);	
	\draw[line width=1,bend left=20] (A15) to (A4);	
	\draw[line width=1,bend left=23] (A15) to (A2);	 
	\draw[line width=1,bend left=20] (A15) to (A6);	 

	\draw[line width=1,bend left=20] (A9) to (A8);	
	\draw[line width=1,bend left=23] (A11) to (A5);		 
	\draw[line width=1,bend left=20] (A11) to (A8);

\draw[rounded corners] (-7,-2) rectangle (2,2) {}; 	 

\node[scale=1] (AX) at (-2.5,-2.7){$k$-bad sites $V_1$} ;

\draw[rounded corners] (6,-2) rectangle (15,2) {}; 	  

\node[scale=1] (AX) at (10.5,-2.7){$k$-good sites $V_2$};

 \end{tikzpicture}
\caption{\label{fig:GoodAndBad} Visualization of the decomposition into $k$-good and $k$-bad sites. We only draw the non-collider edges between $k$-good and $k$-bad sites. In order to apply Proposition \ref{pro:RefinedMECcounting}, a main task is to bound the number of $k$-bad sites, drawn in red, which have a least one edge to some $k$-good site.} 
\end{figure}
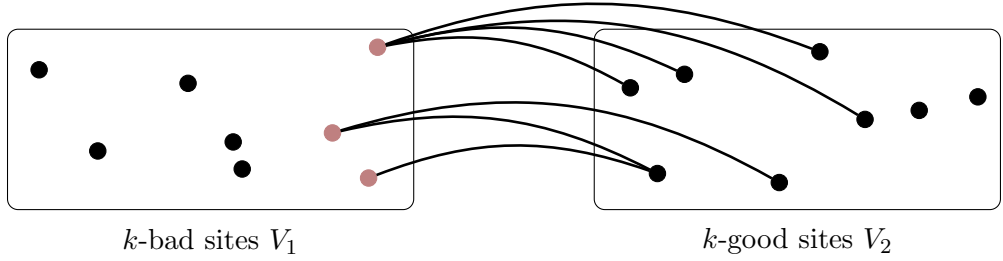
\subsection{On the total number of bad vertices}

In this subsection, we estimate the total number of bad sites. The following lemma is the key result of this subsection. 

\begin{lemma}\label{lem:BadSites}
There exist some universal constants $c,N>0$ such that for all $n \geq N$, and $k\in [n^{19/20}]$, with probability at least $1-\exp(-c k^{\frac{11}{10}})$ at most $k/100$ sites are $k$-bad.
\end{lemma}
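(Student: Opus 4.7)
The plan is to combine a per-site failure estimate with a subset union bound exploiting approximate conditional independence. As a preliminary step, I condition on the event $\mathcal{B}_1$ from Lemma~\ref{lem:NumberOfSitesPerGeneration}, and also on the event from Lemma~\ref{lem:ColliderHighInTower} that no non-collider edge reaches labels above $k^{7/6}$; both hold except on an event of probability at most $\exp(-ck^{5/4})$, which is negligible compared to the target bound $\exp(-ck^{11/10})$. Under $\mathcal{B}_1$, every relevant layer contains at most $k^{2/3}$ sites, and by Remark~\ref{rem:IndependentSamplingTower} edges between non-adjacent layers are independent $\mathrm{Bernoulli}(1/2)$ given the tower, so from this point on we work with honestly independent coin flips.

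The first step is to show that a fixed $v$ with $\ell(v) \in [k/3, k^{7/6}]$ fails to be $k$-good with probability at most $\exp(-c_0 k^{4/5})$, treating the four defining conditions separately. For condition~(1), each $\deg(v,W)$ with $|W| \in [4k^{4/5}, k/3]$ is a binomial with mean $|W|/2$ (up to an $O(1)$ adjustment from adjacent layers), and a Chernoff bound gives failure probability at most $\exp(-c|W|) \leq \exp(-ck^{4/5})$; the polynomial-in-$k$ union bound over pairs $(i,j)$ is absorbed into the constant. Conditions~(3) and~(4) are analogous with windows of linear size, yielding the stronger bound $\exp(-ck)$. For condition~(2), I reduce to showing that for a fixed far pair $(v,w)$ with $|\ell(w)-\ell(v)| \geq 5k^{4/5}$, the edge $(v,w)$ is a non-collider edge with probability at most $\tfrac12 (3/4)^{\Omega(k^{4/5})}$: under $\mathcal{B}_1$ one finds $\Omega(k^{4/5})$ candidate intermediate vertices $u$ in pairwise non-adjacent layers strictly between $v$ and $w$, each contributing an independent factor $3/4$ to the probability that $w$ is not turned into a collider by $u$. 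Summing over the at most $k^{7/6}$ admissible $w$ (by Lemma~\ref{lem:ColliderHighInTower}) then yields the claim.

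For the joint estimate, I fix a subset $S \subset [k/3, k^{7/6}]$ of size $m = k/100$ and bound $\P_n(\text{every } v \in S \text{ is } k\text{-bad})$. Condition on all edges with both endpoints in $S$. Conditionally on this $\sigma$-algebra, for each $v \in S$ the event $\{v \text{ is } k\text{-bad}\}$ depends only on edges from $v$ to $V \setminus S$, and these edge-sets are disjoint across $v \in S$, so the bad events are conditionally independent. On the event that the within-$S$ conditioning is ``typical'' (for each $v \in S$ and each relevant window $W$ of $v$, the number of determined edges $|S \cap W|$ is much smaller than $|W|$; a simple Chernoff event of overwhelming probability), the Chernoff argument of the previous paragraph still gives $\P_n(v \text{ is } k\text{-bad} \mid \text{conditioning}) \leq \exp(-c_0 k^{4/5}/2)$, and hence
\begin{equation*}
\P_n\bigl(\text{every } v \in S \text{ is } k\text{-bad}\bigr) \leq \exp\bigl(-\tfrac{c_0}{2}\, k^{4/5} \cdot m\bigr) + \P_n(\text{atypical conditioning}) \leq \exp(-c_1 k^{9/5}).
\end{equation*}
A union bound over the $\binom{k^{7/6}}{k/100} \leq \exp(O(k \log k))$ choices of $S$ produces a tail probability of $\exp(-\Omega(k^{9/5}))$, which comfortably beats the required $\exp(-ck^{11/10})$.

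The main obstacle is the careful handling of condition~(2): unlike the other three, it is not a statement about the degree of $v$ in a fixed window but a global property involving the non-collider structure around $v$. The key observation is that being a non-collider edge is itself extremely stringent at large label-distance, since it forces the entire in-neighbourhood of the far endpoint to be adjacent to $v$, and $\mathcal{B}_1$ supplies enough candidate intermediates in non-adjacent layers to decouple the event into independent factors. Once condition~(2) is controlled at the single-site level, the subset-union-bound step is essentially bookkeeping, the only delicate point being the verification that conditioning on the (at most $\binom{m}{2}$) within-$S$ edges does not degrade the per-vertex Chernoff bound by more than a constant factor.
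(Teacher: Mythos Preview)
Your per-site analysis is essentially fine, and for properties (1), (3), (4) your conditional-independence scheme works: these are degree conditions, so after conditioning on the edges internal to $S$ the residual events for different $v\in S$ depend on disjoint edge-sets. The gap is in property~(2). The event ``$(v,w)$ is a non-collider'' is \emph{not} measurable with respect to the edges incident to $v$: it requires that every $u$ with $(u,w)\in E$ also be adjacent to $v$, hence it depends on the full in-neighbourhood of $w$, i.e.\ on edges $(u,w)$ with $u\notin\{v,w\}$. Consequently, for two sites $v_1,v_2\in S$ sharing a common far endpoint $w$ (or even using the same intermediate $u$), the events ``$v_i$ has a far non-collider edge'' are genuinely correlated, and conditioning only on within-$S$ edges does not decouple them. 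Your final paragraph flags (2) as the obstacle but then asserts that ``once condition~(2) is controlled at the single-site level, the subset-union-bound step is essentially bookkeeping''---this is exactly where the argument breaks, since the single-site bound does not transfer to a product bound without the (false) independence.

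The paper circumvents this by treating the four properties separately rather than bounding $k$-badness all at once. Properties (1), (3), (4) are handled much as you do, via (approximate) independence of per-vertex degree events. For property~(2) the paper runs the union bound over sets of \emph{edges} rather than vertices: fixing a set $E_2$ of $k^{1/2}$ candidate edges that fail~(2), one performs a trichotomy---either some vertex is the lower endpoint of $\geq k^{1/4}$ edges of $E_2$, or some vertex is the upper endpoint of that many, or $E_2$ contains $\geq k^{1/4}$ pairwise disjoint edges---and in each case one can exhibit $\Omega(k^{1/4}\cdot k^{4/5})$ genuinely independent edge-constraints (using the already-established property~(1) to produce the intermediates). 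This structural case split is what manufactures the independence that your direct product argument lacks. A secondary issue: your ``typical conditioning'' step is also shaky for worst-case $S$, since $|S|=k/100$ can exceed the smallest window size $4k^{4/5}$, so a clustered $S$ could saturate a window entirely; but this is moot once the main independence claim for~(2) fails.
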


\begin{proof} Observe that on $\mathcal{B}_1$ from \eqref{def:B1}, we have for every site $v$ with $\ell(v) \in [k/30,n-k^{4/5}]$ that each edge to a site $w$ with $\ell(w)< \ell(v)- k^{2/3}$ or $\ell(w)> \ell(v)+ k^{2/3}$ is present independently with probability $\frac{1}{2}$. Thus we see that for $n$ and $k$ sufficiently large the events 
\begin{align}\label{eq:LargeUpDegree}
 \mathcal{B}_v^{\text{up}} &:= \Big\{ \deg\left(v, [\ell(v)+1,\dots,\ell(v)+k^{4/5}]\right) \leq \frac{k^{4/5}}{3} \,   \Big| \,\mathcal{B}_1 \Big\} \leq \exp(-c k^{3/4}) \\
 \label{eq:LargeDownDegree}
\mathcal{B}_v^{\text{down}} &:= \Big\{ \deg\left(v, [\ell(v)-k^{4/5}\dots,\ell(v)-1] \right)\leq \frac{k^{4/5}}{3}   \,   \Big| \, \mathcal{B}_1 \Big\} \leq   \exp(-ck^{3/4})
\end{align} hold with probability at most $\exp(-ck^{3/4})$ for some constant $c>0$. 
Since the events $\mathcal{B}_v^{\text{up}}$ and $\mathcal{B}_v^{\text{down}}$ in \eqref{eq:LargeUpDegree} and \eqref{eq:LargeDownDegree} are independent for different sites $v$, respectively, this shows that the events
\begin{equation}
\mathcal{B}^{\text{up}}_2 := \left\{ \left| v\in V \colon \ell(v)\in [k/3,\min(n-k^{4/5},k^{11/10})] \text{ and } \mathcal{B}_v^{\text{up}} \text{ holds} \,  \right| \leq \frac{k^{4/5}}{2} \right\}
\end{equation} as well as
\begin{equation}
\mathcal{B}^{\text{down}}_2 := \left\{ \left| v\in V \colon \ell(v)\in [k/3,\min(n-k^{4/5},k^{11/10})] \text{ and } \mathcal{B}_v^{\text{down}} \text{ holds} \,  \right| \leq \frac{k^{4/5}}{2} \right\}
\end{equation}
hold with probability at least $1 - \exp(-c_0 k^{\frac{11}{10}})$ for some universal constant $c_0$, using the exponential Markov inequality. In total, let $\mathcal{B}_2$ be the event that at most $2 k^{4/5}$ sites with $\ell(v)\in [k/3,k^{11/10}]$ do not satisfy property (1) in the definition of $k$-bad sites, and note that for all $n$ and $k$ sufficiently large, we get that for some constant $c_1>0$
\begin{equation}
\P_{n}(\mathcal{B}_2) \geq 1 - \frac{1}{5}\exp(-c_1 k^{\frac{11}{10}})  .
\end{equation} 
Next, in order to estimate the number of vertices which violate property (2), we condition on the event $\mathcal{B}_{1} \cup \mathcal{B}_{2}$ to hold. Let $E_2$ be a fixed set of $k^{1/2}$ many edges between sites $v,w$, which satisfy property (1). We claim that for all $n$ and $k$ sufficiently large
\begin{equation}\label{eq:CountE2}
\P_n( e \text{ does not satisfy (2) for all }e \in E_2 ) \leq \frac{1}{5 }\exp(-c_2 k^{\frac{11}{10}})
\end{equation} holds for some universal constant $c_2>0$. With a union bound over at most $\binom{k^2}{k^{1/2}}$ choices of $E_2$, this yields that there are with $\P_n$-probability at least $1-\exp(-c_3 k^{11/10})$ for some universal constant $c_3>0$ at most $2k^{1/2}$ sites which do not satisfy property (2). 
To see that \eqref{eq:CountE2} holds, we distinguish three cases. Suppose that  there exists some site $v$ and a set of sites $W_v$ with $|W_v|\geq k^{1/4}$ such that $(w,v) \in E_2$ for all $w \in W_v$. Then by property (1), there are at least $k^{4/5}$ sites $\tilde{W}_v$ with $\ell(u) \geq \ell(v)-4k^{4/5}$ for all $u\in \tilde{W}_v$ adjacent to $v$. Whenever the event in \eqref{eq:CountE2} holds, all edges of the form $(w,u)$ for $w\in W_v$ and $u\in \tilde{W}_v$ must be absent, giving us the desired bound. The same argument applies when there exists some site $v$ and a set of sites $W^{\prime}_v$ with $|W^{\prime}_v|\geq k^{1/4}$ such that $(v,w) \in E_2$ for all $w \in W^{\prime}_v$. In the remaining case, we have at least $k^{1/4}$ disjoint edges in $E_2$, and again use that the edge endpoints satisfy property (1) to conclude \eqref{eq:CountE2}. \\

Next, we turn to estimating the number of vertices which do not satisfy properties (3) and (4) in the definition of $k$-bad sites.  We will only give a bound on the number of sites which do not satsify property (3), as the argument will be similar for property (4). Note that on the event $\mathcal{B}_1$, all edges between a site $w$ with $\ell(w)\leq k/16$ and some $v$ with $\ell(v)\geq k/3$ are present independently with probability $\frac{1}{2}$. Let $\mathcal{B}_{3}^{v}$ be the event that 
\begin{equation*}
\mathcal{B}_{3}^{v} := \left\{ \deg\left(v, \left[0,\dots,\frac{k}{20}\right]\right)  \notin \left[\frac{k}{80}, \dots, \frac{3k}{80} \right]  \right\} \, .
\end{equation*}
Observe that the events $(\mathcal{B}_{3}^{v})$ are independent, have the same probability, and thus
\begin{equation*}
\mathcal{B}_3 := \left\{ \left| v\in V \colon \ell(v)\in [k/3,k^{11/10}] \text{ and } \mathcal{B}_{3}^{v} \text{ holds }  \right| \leq k^{1/5} \right\}
\end{equation*} satisfies
\begin{equation*}
\P_n(\mathcal{B}_3 \, | \, \mathcal{B}_1) \geq 1- \binom{k^2}{k^{\frac{1}{5}}} \P_{n}\Big( \mathcal{B}_{3}^{v} \text{  for all } v \text{ with } \ell(v)\in \Big[\frac{k}{3},\frac{k}{3}+k^{1/5}\Big] \Big) \geq 1-  \exp\big(-c_3 k^{11/10}\big)
\end{equation*} for some constant $c_3>0$. Taking a union bound over the above events, we conclude.
%
%
%
\end{proof}

\subsection{Non-collider edges between good sites}\label{sec:ConnectionsGoodDeep}

We now give a bound on the set $\mathcal{S}$ of non-collider edges with both endpoints being $k$-good and labels between $k/3$ and $k^{7/6}$. Set
\begin{equation}\label{def:BadSetS}
\mathcal{S} =\mathcal{S}_{k,G}:= \left\{ (v,w) \in C(G) \colon \ell(v),\ell(w) \in \{ k/3,\dots,k^{7/6}\} \text{ and } v,w \text{ are } k\text{-good }\right\} 
\end{equation} for all $k>0$. The non-collider edges between $k$-good and $k$-bad sites will be handled separately in the upcoming section.

\begin{lemma}\label{lem:GoodToGood}
There exist constants $c,N>0$ such that for all $n \geq N$, and $k\in [n^{19/20}]$,
\begin{equation}\label{eq:ManyUpperEdges}
\P_{n}\left( |\mathcal{S}_{k,G}| \geq k^{4/5} \right) \leq \exp(-c k^{11/10}) \, .
\end{equation}
\end{lemma}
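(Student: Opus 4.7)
The plan is to union bound over subsets of size $m:=k^{4/5}$ of candidate pairs; for each subset $T$ I will exhibit a witness set of many iid Bernoulli$(\tfrac{1}{2})$ edge constraints whose joint probability is $\leq \exp(-c k^{7/5})$, giving more than enough room to absorb the entropy from the union bound. First I would condition on the tower event $\mathcal{B}_1$ from \eqref{def:B1}, which fails with probability $\leq \exp(-ck^{5/4})$ by Lemma~\ref{lem:NumberOfSitesPerGeneration}; under $\mathcal{B}_1$ every relevant layer has size $\leq k^{2/3}$, so any two sites with $|\ell(\cdot)-\ell(\cdot)|\geq 3k^{2/3}$ lie in non-adjacent layers and, by Remark~\ref{rem:IndependentSamplingTower}, their edge is iid Bernoulli$(\tfrac{1}{2})$ conditional on the tower.

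The single-pair witness is built as follows. If $(v,w)\in \mathcal{S}_{k,G}$ with $\ell(v)<\ell(w)$, the non-collider condition forces every in-neighbor of $w$ other than $v$ to be adjacent to $v$. Applying property~(1) of $w$-goodness with $i=k/3$ produces at least $k/9$ in-neighbors in the window $W_w=[\ell(w)-k/3,\ell(w)-1]$, and discarding the at most $3k^{2/3}$ sites lying in $v$'s layer or an adjacent one still leaves $\geq k/10$ in-neighbors $u$ for which the edge $\{u,v\}$ is iid Bernoulli$(\tfrac{1}{2})$. Thus each pair in $T$ contributes a conjunction of $\geq k/10$ independent $\tfrac{1}{2}$-constraints.

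For fixed $T=\{(v_i,w_i)\}_{i=1}^m$ I would split on the multiplicity of the $v$-endpoints. In Case~A, there are $\geq k^{2/5}$ distinct $v_i$: selecting one pair per distinct $v$ yields $\geq k^{2/5}$ pairs whose witness edges live incident to different $v$'s and are therefore disjoint, giving $\geq k^{2/5}\cdot k/10$ independent $\tfrac{1}{2}$-constraints and conditional probability $\leq \exp(-c_A k^{7/5})$. In Case~B, some vertex $v^\star$ appears as the $v$-endpoint in $\geq q:=k^{2/5}$ pairs. This is the main obstacle, because the witnesses from the previous paragraph now all live on edges incident to $v^\star$ and are no longer disjoint across pairs. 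The resolution is to swap ``witness edges must be present'' for ``witness edges must be absent'': conditioning additionally on all edges incident to $v^\star$ fixes $N^\star$, and then $(v^\star,w_j)$ being non-collider is equivalent to $(u,w_j)\notin E$ for every $u\in W_{w_j}\setminus N^\star$. A Chernoff bound on the iid Bernoulli$(\tfrac{1}{2})$ edges from $v^\star$ into $W_{w_j}$ gives, except on an event of probability $\exp(-ck)$, $|N^\star\cap W_{w_j}|\leq k/4$, hence $|W_{w_j}\setminus N^\star|\geq k/12$. Since the $w_j$ are distinct, the ``edge absent'' constraints across different $j$ involve disjoint edges (each incident to a distinct $w_j$ and not to $v^\star$), yielding $\geq q\cdot k/12$ independent $\tfrac{1}{2}$-constraints and conditional probability $\leq \exp(-c_B k^{7/5})$.

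Finally, the number of candidate pairs is at most $k^{7/3}$, so the union bound over $T$ costs $\binom{k^{7/3}}{k^{4/5}}\leq \exp\bigl(\tfrac{7}{3}k^{4/5}\log k\bigr)$. Since $k^{7/5}\gg k^{4/5}\log k$ for $k$ large, combining Case~A, Case~B, and the probability of $\mathcal{B}_1$ yields the desired $\exp(-c k^{11/10})$. The hard step will be Case~B, where naive disjointness breaks and one must pivot the witness from the $v$-side (edges present) to the $w$-side (edges absent), relying crucially on the fact that the $w_j$ remain distinct.
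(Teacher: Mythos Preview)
Your overall strategy—union bound over candidate sets $T$ of size $k^{4/5}$, then exhibit many independent Bernoulli$(\tfrac12)$ constraints for each fixed $T$ via a case split on endpoint multiplicity—is exactly the paper's. Your Case~B is correct and parallels the paper's first case. The difference is that you build witnesses from property~(1) (local in-degree in the window $W_w=[\ell(w)-k/3,\ell(w)-1]$), while the paper uses property~(3) (neighbors and non-neighbors in $[0,k/20]$); this choice is what makes your Case~A leak.

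The gap is in the independence claim in Case~A. To convert ``$w_i$ has $\geq k/10$ in-neighbors in $W_{w_i}$, each adjacent to $v_i$'' into $\tfrac12$-constraints on the edges $\{u,v_i\}$, you must first reveal the edges incident to the selected $w_j$'s in order to \emph{identify} those in-neighbors. After this conditioning, $\{u,v_i\}$ is a fresh Bernoulli edge only if it is not itself incident to some selected $w_j$; in particular you need $v_i\neq w_j$ for every selected $j$. Your selection rule (one pair per distinct $v$) guarantees the $v_i$ are distinct but does nothing to prevent $v_i=w_j$, and since your window $W_{w_i}$ sits inside $[0,k^{7/6}]$ where all the endpoints live, this overlap can wipe out every fresh constraint for pair $i$. (A union bound over the possible in-neighbor sets is too expensive: $\binom{k/3}{k/10}$ per pair swamps $2^{-k/10}$.) The paper avoids this entirely by taking the witness sites $u$ in $[0,k/20]$, which is disjoint from $[k/3,k^{7/6}]$; once the edges from each $w$ into $[0,k/20]$ are revealed, the edges from each $v$ into $[0,k/20]$ are automatically fresh because $v\neq w'$. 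Your approach can be repaired without property~(3) by adding a third case for a high-multiplicity \emph{upper} endpoint $w^\star$ (reveal only edges to $w^\star$; then the witness edges $\{u,v_j\}$ for the $\geq k^{2/5}$ distinct $v_j$ are all fresh since $\ell(v_j)<\ell(w^\star)$), and in the residual low-multiplicity case extracting a vertex-disjoint matching of size $\gtrsim k^{2/5}$ so that $\{v_i\}\cap\{w_j\}=\emptyset$ by construction.
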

\begin{proof} In order to control the total number of non-collider edges with both endpoints of labels between $k/3$ and $k^{7/6}$, fix some set $S$ with
\begin{equation}
S = \left\{ \{a,b\} \colon a,b \in V(G) \text{ and } \ell(a),\ell(b) \in \{k/3,\dots,k^{7/6} \} \right\}
\end{equation} and $|S|=k^{4/5}$. In order to show \eqref{eq:ManyUpperEdges}, it suffices prove for $n$ and $k$ large that
\begin{equation}\label{eq:FixedSetS}
\P_{n}\big( \{ v,w\} \in \mathcal{C}(E) \colon \ell(v)=i \text{ and } \ell(w)=j \text{ for all } (i,j)\in S  \big) \leq \exp(-c k^{11/10})
\end{equation} for any such fixed subset $S$,  taking a union bound over the at most $\binom{k^{7/6}}{k^{4/5}}$ choices for such sets $S$. In the following, for each $k$-good site $u$, we denote by $\mathcal{U}_1(u)$ the set of sites according to property (3), which have an edge to $u$, and by $\mathcal{U}^{\textup{c}}_1(u)$ the set of sites $v$ with $\ell(v)\in [k/20]$ such that $v \notin \mathcal{U}_1(u)$, i.e.\ there is no edge between $v$ and $u$. 
Depending on the structure of the set $S$, we distinguish three cases.
First, assume there exists some site $s$ and a set of sites $W(s)$ with $|W(s)| \geq k^{2/3}$ such that $\{s,u\} \in S$ and $\ell(u) \geq \ell(s)$ for all $u\in W(s)$. Since $s$ is $k$-good, we see that  all edge of the form $(v,u)$ for some $v\in \mathcal{U}^{\textup{c}}_1(s)$ and $u\in W(s)$ must be absent in order to have $S$ as a set of non-collider edges. 
Since $|\mathcal{U}^{\textup{c}}_1(u)|\geq k/80$ and $|W(s)| \geq k^{2/3}$, this allows us to conclude \eqref{eq:FixedSetS} for this choice of $S$. 
Next, suppose that there exists some site $s$ and a set of sites $W^{\prime}(s)$ with $|W^{\prime}(s)| \geq k^{2/3}$ such that $\{u,s\} \in S$ and $\ell(u) \leq \ell(s)$ for all $u\in W^{\prime}(s)$. In order to have $S$ as a set of non-collider edges, all edges $(u,w)$ for sites $w\in W^{\prime}(s)$ and $u\in \mathcal{U}_1(s)$ must be present, allowing us to conclude 
\eqref{eq:FixedSetS} for this choice of $S$. 
Finally, if none of the above two cases holds, there exists a subset $W \subseteq S$ of disjoint edges with $|W| \geq k^{1/10}$. For each of these edges $(v,w)$, we must have that all edges of the form $(v,u)$ for some $u\in \mathcal{U}_1(w)$ are present. Since $|\mathcal{U}_1(w)|\geq k/80$, and there are at at least $k^{1/10}$ choices for $v$, this allows us to conclude 
\eqref{eq:FixedSetS} for this choice of $S$, and thus finishes the proof.
\end{proof}

\subsection{Non-collider edges connecting to good sites}

It remains to control the number of sites $v$ which connect to some $k$-good site by a non-collider edge.

\begin{lemma}\label{lem:ConnectionGoodBad}
There exists some universal constant $c>0$ such that for all $k\in \N$, with probability at least $1-\exp(-c k^{\frac{11}{10}})$, there are at most $k^{1/10}$ sites $v$ which are connected to some $k$-good site by a non-collider edge.
\end{lemma}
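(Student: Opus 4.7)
The plan is to condition on the tower structure and the event $\mathcal{B}_1$ of Lemma~\ref{lem:NumberOfSitesPerGeneration}; under $\mathcal{B}_1$ the source-side layers are thin, so that any two sites whose labels differ by more than $k^{2/3}$ lie in layers of distance at least $2$, and the unique tower-consistent edge between them is present independently with probability $\tfrac12$ by Remark~\ref{rem:IndependentSamplingTower}. The key observation driving the proof is that if a site $w$ is joined to a $k$-good site $u$ by a non-collider edge, then for every in-neighbor $a$ of $u$ with $a\neq w$, the pair $\{w,a\}$ must itself be an edge of $G$; otherwise the triple $(w,a,u)$ would form a $v$-structure at $u$, violating the non-collider property of the edge between $w$ and $u$.

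Next, I would pick a ``witness'' label range depending on $\ell(w)$: namely $[k/5,k/4]$ when $\ell(w)<k/10$ (using property~(4) of $k$-goodness) and $[0,k/20]$ when $\ell(w)\in[k/10,k^{7/6}]$ (using property~(3)), so that every witness label differs from $\ell(w)$ by more than $k^{2/3}$. By Lemma~\ref{lem:ColliderHighInTower} we may restrict to $\ell(w)<k^{7/6}$. In either case, property~(3) or~(4) guarantees at least $k/80-1$ witness in-neighbors of $u$, each of which forces one independent edge incident to $w$.

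I would then fix a candidate set $W$ of $k^{1/10}$ sites together with an assignment $w\mapsto u(w)$ to $k$-good sites, and bound $\P_n\bigl(\bigcap_{w\in W}\{u(w)\text{ is }k\text{-good and }(w,u(w))\in C(G)\}\bigr)$. The forced edges contributed by different $w\in W$ always involve distinct pairs, even when the $u(w)$'s coincide, because the $w$-endpoint differs. Hence, by the independence from Remark~\ref{rem:IndependentSamplingTower}, the probability that at least $|W|(k/80-1)=\Omega(k^{11/10})$ forced edges are all present is at most $\exp(-c_0 k^{11/10})$ for some $c_0>0$. A union bound over the at most $\binom{n}{k^{1/10}} n^{k^{1/10}}\le \exp(O(k^{1/10}\log n))$ choices of $(W,u(\cdot))$ is absorbed into this exponential, since $k\le n^{19/20}$ yields $k^{1/10}\log n=o(k^{11/10})$.

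The main obstacle is the circularity between the edges that determine whether $u$ is $k$-good and the edges that the non-collider condition forces on $w$: both concern edges incident to $u$ and to the chosen witnesses. I would resolve this by first revealing all edges touching $u(w)$ and the sites in the witness ranges (which alone determines properties~(3) and~(4), and we separately condition on properties~(1) and~(2) being met), and then exploiting that the remaining forced edges $\{w,a\}$ for $a$ a witness are still independent Bernoulli$(\tfrac12)$ conditional on this revelation and the tower. The minor subtlety that $w$ may itself happen to lie in its own witness range costs at most one forced edge per $w$ out of at least $k/80$, which does not affect the exponent.
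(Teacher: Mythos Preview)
Your key observation --- that a non-collider edge between $w$ and a $k$-good site $u$ forces $\{w,a\}$ to be an edge for every in-neighbor $a$ of $u$ --- is only valid when the edge is oriented $w\to u$, i.e.\ when $\ell(w)<\ell(u)$. If instead $\ell(w)>\ell(u)$ (which is possible whenever $\ell(w)\in(k/3,k^{7/6}]$, since $k$-good sites have labels in this same range), the edge is $(u,w)$, and the non-collider condition at $w$ says that every in-neighbor of $w$ must be adjacent to $u$; it places no constraint whatsoever on edges between $w$ and the in-neighbors of $u$. So for this orientation your ``forced edges'' simply do not exist, and the probability bound for that $w$ collapses. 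The paper treats this case separately (its event $\mathcal{A}^4_v$): when $\ell(w)>\ell(u)$ one observes that any $a\in\mathcal{U}_1^{\textup{c}}(u)$ --- a site with label in $[0,k/20]$ \emph{not} adjacent to $u$ --- cannot be an in-neighbor of $w$, since otherwise $(u,a,w)$ would be a $v$-structure at $w$. Property~(3) gives $|\mathcal{U}_1^{\textup{c}}(u)|\ge k/20-3k/80=k/80$, so one forces the \emph{absence} of at least $k/80$ edges $\{a,w\}$, yielding the same $2^{-\Omega(k)}$ cost per vertex. Your argument needs this additional case.

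There is also a quantitative slip in your union bound: you write $\binom{n}{k^{1/10}}n^{k^{1/10}}$ and claim $k^{1/10}\log n=o(k^{11/10})$ from $k\le n^{19/20}$, but that inequality only gives a \emph{lower} bound $\log n\ge\tfrac{20}{19}\log k$, and the claim fails whenever $k$ is small relative to $n$. This is easily repaired by using the restriction to labels $\le k^{7/6}$ that you already invoked via Lemma~\ref{lem:ColliderHighInTower}: the union bound then runs over at most $(k^{7/6})^{2k^{1/10}}=\exp\bigl(O(k^{1/10}\log k)\bigr)$ choices, and $k^{1/10}\log k=o(k^{11/10})$ unconditionally. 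The paper sidesteps this union bound entirely by defining per-vertex events $\mathcal{A}^i_v$, arguing they are conditionally independent across $v$ once the tower and the $k$-good sites (with their witness sets $\mathcal{U}_j$, $\mathcal{U}_j^{\textup{c}}$) are revealed, and then controlling the number of bad $v$ by a binomial tail; this also avoids the delicate two-stage revelation you sketch for decoupling the forced edges from property~(2).
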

\begin{proof}
Observe that the property of being $k$-good does not depend on the edges having their lower endpoint in the bottom $k/6$ sites, and their upper endpoint between $k/6$ and $k/3$. In the following, we partition the sites $v$ with $\ell(v) \leq k^{7/6}$ as
\begin{align*}
\mathcal{S}_1 &:= \left\{ v\in V \colon \ell(v) \in [k/6] \right\} \\
\mathcal{S}_2 &:= \left\{ v\in V \colon \ell(v) \in \{k/6+1,\dots,k/3\} \right\} \\
\mathcal{S}_3 &:= \left\{ v\in V \colon \ell(v) \in \{k/3+1,\dots,k^{7/6}\}  \right\} .
\end{align*}
For each $k$-good site $w$, recall that $\mathcal{U}_1(w)$ denotes the set of sites according to property (3), which have an edge to $w$, and that $\mathcal{U}^{\textup{c}}_1(w)$ is the set of sites $v$ with $\ell(v)\in [k/20]$ such that $v \notin \mathcal{U}_1(w)$. Similarly, we let $\mathcal{U}_2(w)$ be the set of sites according to property (4), which have an edge to $w$, and $\mathcal{U}^{\textup{c}}_2(w)$ the set of sites $v$ with $\ell(v)\in [k/5,k/4]$ such that $v \notin \mathcal{U}_2(w)$. We argue in the following for the sites in $\mathcal{S}_i$  separately for each $i\in [3]$. \\

For each site $v \in \mathcal{S}_1$, we define the event
\begin{equation*}
\mathcal{A}^1_v := \left\{ \exists \, k\text{-good } w \colon (v,w) \in E \text{ and } (u,v) \in E \text{ for all } u \in \mathcal{U}_1(w) \, \right\}  .
\end{equation*} Note that $\mathcal{A}^1_v$ is a necessary condition to have a non-collider edge $(v,w)$ for some $k$-good site $w$. Recall that $|\mathcal{U}_1(w)|\geq k/80$ when $w$ is $k$-good. Conditioning on the event $\mathcal{B}_1$, using the independence of the presence of these edges and a union bound over the sites $w$, we get
\begin{equation*}
 \P_{n}(\mathcal{A}^1_v \, | \, \mathcal{B}_1 ) \leq k^2 2^{-k/80} 
\end{equation*} for all $n\in \N$ sufficiently large. Since for a fixed set of $k$-good sites, the events $\mathcal{A}^1_v$ are independent, we see that for all $n\in \N$ sufficiently large, and all $k \in [n^{19/20}]$
\begin{equation*}
\P_{n}\left( | v\in \mathcal{S}_1 \colon (v,w) \text{ non-collider for some } k\text{-good } w | \geq \frac{1}{4}k^{\frac{1}{10}} \right) \leq \exp(-c_1 k^{11/10})
\end{equation*} for some suitable constant $c_1>0$. Next, we define for each site $v \in \mathcal{S}_2$ the event 
\begin{equation*}
\mathcal{A}^2_v := \left\{ \exists \, k\text{-good } w \colon (v,w) \in E \text{ and } (v,u) \in E \text{ for all } u \in \mathcal{U}_2(w) \, \right\}  .
\end{equation*} Similarly, we get that
\begin{equation*}
 \P_{n}(\mathcal{A}^2_v \, | \, \mathcal{B}_1 ) \leq k^2 2^{-k/80} \, .
\end{equation*} for all $n\in \N$ sufficiently large, and all $k \in [n^{19/20}]$. Since the events $\mathcal{A}^2_v$ are independent, and a necessary condition to have a non-collider edge $(v,w)$ for some $k$-good $w$, we obtain
\begin{equation*}
\P_{n}\left( | v\in \mathcal{S}_2 \colon (v,w) \text{ non-collider for some } k\text{-good } w | \geq \frac{1}{4}k^{\frac{1}{10}} \right) \leq \exp(-c_2 k^{11/10})
\end{equation*} for some suitable constant $c_2>0$. It remains to consider the case where $v\in \mathcal{S}_3$. We define 
\begin{equation*}
\mathcal{A}^3_v := \left\{ \exists \, k\text{-good } w \colon \ell(w) > \ell(v) \text{ and } (v,w) \in E \text{ and } (u,v) \in E \text{ for all } u \in \mathcal{U}_1(w) \, \right\} \, ,
\end{equation*} and obtain that for some constant $c_3>0$, all $n\in \N$ sufficiently large, and all $k \in [n^{19/20}]$
\begin{align*}
 \P_{n}&\left( | v\in \mathcal{S}_3 \colon (v,w) \text{ non-collider for some } k\text{-good } w \text{ with } \ell(w)\geq \ell(v)| \geq \frac{1}{4}k^{\frac{1}{10}} \right) \\ &\leq \exp(-c_3 k^{11/10})  ,
\end{align*} using a similar argument as in the previous two cases. For the connection between $k$-bad sites in $\mathcal{S}_3$ with respect to the remaining $k$-good sites below them, we define
\begin{equation*}
\mathcal{A}^4_v := \left\{ \exists \, k\text{-good } w \colon \ell(w) < \ell(v) \text{ and }(v,w) \in E \text{ and } (u,v) \notin E \text{ for all } u \in \mathcal{U}^{\textup{c}}_1(w) \, \right\} \, .
\end{equation*} Note that the events $\mathcal{A}^4_v$ are independent for all $v\in \mathcal{S}_3$, and a necessary condition for having a non-collider edge,  using property  (3) of the $k$-good vertices. Since $|\mathcal{U}^{\textup{c}}_1| \geq k/80$ for all $k$-good sites, we get that for all $n\in \N$ sufficiently large
\begin{equation*}
 \P_{n}(\mathcal{A}^4_v \, | \, \mathcal{B}_1 ) \leq k^2 2^{-k/80} \, .
\end{equation*} Hence, we obtain that for some suitable constant $c_4>0$ and for all $n\in \N$ large enough
\begin{align*}
 \P_{n}&\left( | v\in \mathcal{S}_3 \colon (v,w) \text{ non-collider for some } k\text{-good } w \text{ with } \ell(w)< \ell(v)| \geq \frac{1}{4}k^{\frac{1}{10}} \right) \\ &\leq \exp(-c_4 k^{11/10})  .
\end{align*}
Combining the above bound on the sites in $\mathcal{S}_1$, $\mathcal{S}_2$, and $\mathcal{S}_3$ finishes the proof.
\end{proof}

\subsection{Combining the above estimates}\label{sec:ProofProposition}

We have all tools to show Proposition \ref{pro:NonCollider}, and thus to conclude our main result Theorem \ref{thm:Moments}.

\begin{proof}[Proof of Proposition \ref{pro:NonCollider}] We apply Proposition \ref{pro:RefinedMECcounting} in order to estimate the size of the underlying Markov equivalence class. Fix some $k=k(n)\in [n^{19/20}]$. Note that by Lemma~\ref{lem:ColliderHighInTower}, it suffices to consider the case where the vertex set $V(\mathcal{C})$ of the chain graph consists only of sites $w$ with $\ell(w)\leq k^{7/6}$. We declare $V_1$ to be the set of sites $v$ which are $k$-bad sites, have a label $\ell(v)\leq k/3$, or are part of the set $\mathcal{S}_{k,G}$ defined in \eqref{def:BadSetS}. All other sites in $V(\mathcal{C})$ are assigned to $V_2$. Recall the constants $a,b>0$ in Proposition~\ref{pro:RefinedMECcounting}, and set
\begin{align}\label{eq:ChoiceABC}
a := k^{\frac{4}{5}} \quad \text{ and } \quad b := k^{\frac{1}{10}} \, .
\end{align} Note that on the event $\mathcal{B}_1$, which holds with probability at least $1-\exp(-c_1k^{\frac{11}{10}})$ by Lemma~\ref{lem:NumberOfSitesPerGeneration} for some  $c_1>0$, we get the first property in Proposition~\ref{pro:RefinedMECcounting} with respect to $a$ in \eqref{eq:ChoiceABC}. The second property in Proposition~\ref{pro:NonCollider} holds with probability at least $1-\exp(-c_2k^{\frac{11}{10}})$ for some $c_2>0$ using Lemma~\ref{lem:ConnectionGoodBad} for $b$ from \eqref{eq:ChoiceABC}. The third and forth property hold with probability at least $1-\exp(-c_3k^{\frac{11}{10}})$ for some $c_3>0$, using the construction of $k$-bad sites and Lemma~\ref{lem:GoodToGood}. Combining  Lemma~\ref{lem:BadSites} and Lemma~\ref{lem:GoodToGood}. we have with probability at least $1-\exp(-c_4k^{\frac{11}{10}})$ for some $c_4>0$ that $|V_1|\leq k/2$. In total, we get 
\begin{equation}
|M(G)| \leq (k/2)! \Big(k^{7/6}\Big(10k^{4/5}\Big)!\Big)^{k^{1/10}} \leq k !
\end{equation} with probability at least $1-\exp(-ck^{\frac{11}{10}})$ for all $k>0$ sufficiently large, and some constant $c>0$. This finishes the proof of Proposition \ref{pro:NonCollider}, and thus of Theorem \ref{thm:Moments}.
\end{proof} 

\subsection*{Acknowledgment} We thank Ralph Adolphs and Frederick Eberhardt for fruitful discussions at a summer school on \textit{Causality in Neuroscience} organized by the \textit{Studienstiftung des deutschen Volkes}, and Dor Elboim for helpful comments. DS acknowledges the DAAD PRIME program for financial support.  AS was partially supported by NSF grant DMS-1855527, a Simons Investigator grant and a MacArthur Fellowship.

\nocite{*}
\bibliographystyle{plain}
\bibliography{DAGMEC.bib}

\end{document}